\documentclass[a4paper,12pt]{article} 
\usepackage[T1]{fontenc}
\usepackage{lmodern} 
\usepackage[pagewise]{lineno}
\usepackage{geometry}
\usepackage{amsmath}
\usepackage{amssymb}
\usepackage{indentfirst}
\usepackage{graphicx}
\usepackage{color}
\usepackage[colorlinks=true]{hyperref}
\usepackage{mathrsfs}
\usepackage{chngcntr}
\usepackage[english]{babel} 
\usepackage{appendix}
\newtheorem{Thm}{Theorem}[section]

\newtheorem{Lem}[Thm]{Lemma}

\newtheorem{Exam}{Example}[section]
\newtheorem{Rem}{Remark}[section]
\newtheorem{Def}{Definition}[section]

\newcommand{\Z}{\mathbb{Z}}
\newcommand{\R}{\mathbb{R}}

\numberwithin{equation}{section}

\newenvironment{proof}{\medskip\par\noindent{\bf Proof\/}.~}{\qquad
	\raisebox{-0.5mm}{\rule{1.5mm}{1mm}}\vspace{6pt}}

\title{Your Paper Title}
\author{}
\date{} 

\begin{document}
			
	\title{Fully sign-changing Nehari constraint vs sign-changing solutions of a competitive Schr\"{o}dinger system\thanks{This work is supported by NSFC 12261107, China.}}
	\author{Xuejiao Fu$^1$, Fukun Zhao$^{1,2}$\thanks{Corresponding authors. 
			E-mail addresses: fukunzhao@163.com (F. Zhao).}\\
		{\small 1. Department of Mathematics, Yunnan Normal University, Kunming 650500, P.R. China}\\
		{\small 2. Yunnan Research Center Of Modern Analysis And Partial Differential Equations,}\\ {\small Kunming 650500, PR China}}

	\date{ }
\maketitle

\begin{abstract}
	We study a competitive nonlinear Schr\"odinger system in $\mathbb{R}^N$ whose
	nonlinear potential is localized in small regions that shrink to isolated points. Within a variational framework based on a fully sign-changing Nehari constraint and
Krasnosel'skii genus, we construct, for all $\varepsilon>0$, a sequence of sign-changing solutions with increasing and unbounded energies, and 
after suitable translations they converge to a sequence of sign-changing
	solutions of the associated limiting system as $\varepsilon\to 0$ in $H^1$-norm. Moreover, these sign-changing solutions concentrate around the prescribed attraction points both in $H^1$-norm and $L^q$-norm for $q\in [1,\infty]$.
\end{abstract}

{\small \noindent{\bf  Keywords.}~Schr\"{o}dinger system; Sign-changing solution;  Shrinking region; Attraction; Concentration.\\
\noindent{\bf  MSC.} 35B44; 35B40; 35B20; 35J50}

\section{Introduction}\label{introduction}
\par 
In this paper we consider the following competitive Schr\"odinger system
\begin{equation}\label{eq1.1}
	\begin{cases}
		-\Delta u_i + u_i
		= \mu_i Q_\varepsilon(x-y_i) |u_i|^{2p-2}u_i
		+ \sum_{j\neq i} \lambda_{ij} |u_j|^{p} |u_i|^{p-2}u_i, \\[6pt]
		u_i \in H^1(\mathbb{R}^N),\ i=1,\dots,m.
	\end{cases}	
\end{equation}
where $N\ge1$, $m\ge2$. Nonlinear Schr\"odinger systems with competing interactions have been extensively studied in recent years, motivated both by physical models such as Bose--Einstein
condensates and nonlinear optics, and by the rich mathematical structure of coupled
elliptic equations.
Among the various phenomena that may occur in this context, a
particularly interesting situation arises when the nonlinear potentials are localized in
small regions that shrink to isolated points as a small parameter $\varepsilon>0$ tends to zero. In this case,
solutions may concentrate around finitely many prescribed points and exhibit a delicate
interaction between the different components.
\par
 We now state the structural assumptions that will be used throughout the paper. They are standard in this context and reflect the subcritical regime and the competitive nature of the system.
\begin{itemize}
  \item [$(A_1)$] $1<p<\frac{2^*}{2}$ if $N\ge3$, and $p>1$ if $N=1,2$,
where $2^*=\frac{2N}{N-2}$ if $N\ge3$.
  \item [$(A_2)$] $\mu_i>0$, $i=1,\dots,m$, $\lambda_{ij}=\lambda_{ji}<0$, for all $i\neq j$.
\item[$(A_3)$]
Let $Q_\varepsilon(x-y_i):=Q(\varepsilon x-y_i)$, where
$Q\in C(\mathbb{R}^N)$ is nonnegative, $\operatorname{supp}(Q)$ is bounded, and there exist exactly pairwise distinct points $y_1,\dots,y_m\in\mathbb{R}^N$ such that
\[
Q(-y_i)=\|Q\|_\infty=\max\limits_{x\in\mathbb{R}^N}Q(x)~\text{for all }i=1,\dots,m.
\]
\end{itemize}
\par
In this setting, the solutions $u_i$ may be interpreted as standing wave profiles of
different species which are attracted to regions where $Q_\varepsilon$ is
positive and repelled from their complement. The assumption $\lambda_{ij}<0$ means that
distinct components repel each other, which in turn favors spatial segregation, so the system \eqref{eq1.1} is competitive. 
\begin{Exam}\label{exam1.1}
There are functions satisfying $(A_3)$.
Assume $N\ge2$ and fix $m$ pairwise distinct points $y_1,\dots,y_m\in\mathbb R^N$.
Set
\[
d:=\min_{i\neq j}|y_i-y_j|>0,
~
r:=\frac{d}{4}.
\]
Let $\rho\in C_0^\infty(\mathbb R^N)$ be the standard compactly supported bump
\[
\rho(x):=
\begin{cases}
\exp\!\bigl(-\frac{1}{1-|x|^2}\bigr), & |x|<1,\\[2mm]
0, & |x|\ge 1,
\end{cases}
~
\text{and define}~
\phi(x):=\frac{\rho(x)}{\rho(0)} .
\]
Then $\phi\in C_0^\infty(\mathbb R^N)$, $\phi\ge0$, $\operatorname{supp}(\phi)\subset \overline{B_1(0)}$,
$\phi(0)=1$, and $\phi(x)<1$ for all $x\neq0$.
		
For each $i=1,\dots,m$ define
\[
\phi_i(x):=\phi\!\left(\frac{x+y_i}{r}\right),
~ x\in\mathbb R^N,
\]
and set
\[
Q(x):=\max_{1\le i\le m}\phi_i(x),~ x\in\mathbb R^N.
\]
Then $Q$ satisfies $(A_3)$.
\end{Exam}
\par
In the scalar case, a fundamental contribution is due to Ackermann and Szulkin~\cite{AckermannSzulkin2013},
who showed that when the positive region of the nonlinear coefficient collapses to isolated
points, every nontrivial solution concentrates at one of these cores, and ground states
select a single core without splitting their mass.  In this framework, the sign structure of the nonlinearity is the sole driver of localization, without any periodicity or
symmetry assumption on the linear part. Since then, their approach has been extended
in various directions, including problems on the whole space and coupled systems;
see, e.g.,~\cite{ClappHernandezSantamariaSaldana2025,ClappSaldanaSzulkin2025,ZhongZou2014}
and the references therein. We point out that these works focus primarily on positive solutions or
the least energy solutions, for which the Mountain Pass Theorem (\cite{Willem1996}) and the
maximum principles are available. Very recently, in \cite{ClappSaldanaSzulkin2025}, Clapp, Salda\~na and Szulkin studied the system \eqref{eq1.1} with a single or multiple shrinking domains, the existence of nonnegative least energy solution and concentration behavior were obtained as $\varepsilon\to0$. Furthermore, the
authors characterized the limit profile and described how the components either decouple
or remain coupled depending on the geometry of the attraction centers. Related results
for scalar equations, including concentration of semiclassical states and the description
of their limit profiles, were obtained in earlier works such as
\cite{AckermannSzulkin2013,FangWang2020}, and see 
\cite{TavaresTerracini2012} and the references therein for phase separation phenomena of competitive systems. We refer \cite{Clapp-Pistoia-Saldana-2026-JMPA} to the existence of concentrating positive solutions via a Lyapunov Schmidt reduction strategy.
\par
However, much less is known about sign-changing solutions for systems of the form
\eqref{eq1.1}. Even for the scalar equation, constructing nodal solutions requires a refined variational approach based on nodal Nehari sets and careful control of the positive and negative parts of the solutions. As far as we known, there are only two papers \cite{ClappHernandezSantamariaSaldana2025,Clapp-Pistoia-Saldana-2026-JMPA} concerned with this topic. In \cite{ClappHernandezSantamariaSaldana2025}, Clapp, Hern\'andez-Santamar\'ia and Salda\~na obtained the existence and concentration of  nodal solutions via the nodal Nehari manifold method,  and characterized the symmetries and the polynomial decay of the least-energy nodal limiting profiles. In \cite{Clapp-Pistoia-Saldana-2026-JMPA}, Clapp, Pistoia and Salda\~na established the existence of concentrating nodal solutions via a Lyapunov Schmidt reduction method.
But for systems, the situation is substantially more
delicate, since one has to keep track simultaneously of the sign structure of each
component and of the competitive couplings between different components. It seems that there is no work concerned with the existence and concentration of sign-changing solutions of the system \eqref{eq1.1}.
\par
First, we state the meaning of sign-changing solutions of \eqref{eq1.1}.
\begin{Def}\label{def1.1}
A solution $\mathbf{u}=(u_1,u_2,\cdots,u_m)$ of \eqref{eq1.1} is called \emph{sign-changing} if for each $i=1,\dots,m$, both the positive part $u_i^+=\max\{u_i,0\}$ and the negative part $u_i^-=\min\{u_i,0\}$ are nonzero in $H^1(\mathbb{R}^N)$, i.e., $\|u_i^\pm\|_{H^1}>0$.
\end{Def}
\par
The purpose of
this paper is to develop a variational framework to seek for infinitely many sign-changing solutions of \eqref{eq1.1}, and to analyze their concentration behavior as $\varepsilon\to 0$. 
The main technical difficulties arise from the sign-changing nature of the solutions. 
First, the natural Nehari manifold associated with nonnegative solutions does
not capture directly sign changing solutions, so for elements in the Nehari manifold, one has to impose constraints separately on the
positive and negative parts of each component. This leads to a nonlinear and
nonconvex constraint set on which the direct minimization is not available.  
Second, the competitive couplings $\lambda_{ij}<0$ mix the components in a nontrivial way, and one must show that on the nodal constraint the energy still controls the
$H^{1}$-norm uniformly, so that Palais-Smale sequences are bounded.  
Third, compactness issues are more severe in the nodal setting, one has to
prevent vanishing of some sign components and to rule out loss of mass at
infinity while keeping track of the nodal structure. So it is interesting to seek for sign-changing solutions of \eqref{eq1.1} since the above difficulties prevent one to use techniques based on the maximum principles, the comparison arguments, or the monotonicity methods. 
These obstacles make it necessary to develop a variational strategy that is tailored to
the nodal structure of the problem and that remains effective in the presence of
competitive couplings. 
\par
We are now in a position to state the main results. Our first theorem
shows that, for small $\varepsilon>0$, system~\eqref{eq1.1} admits a sequence of sign-changing solutions with unbounded energies.
\begin{Thm}\label{thm1.1}
	Assume $N\ge1$, $m\ge2$, and \textbf{\textup{$(A_1)$--$(A_3)$}}. Then for $\varepsilon>0$, 
	\eqref{eq1.1} admits a sequence of sign-changing solutions
	\[
	\mathbf{u}^{(k)}_\varepsilon=(u^{(k)}_{\varepsilon,1},\dots,u^{(k)}_{\varepsilon,m})\in H, ~ k\in\mathbb{N}^*,
	\]
	satisfying
	\[
	0<J_\varepsilon\bigl(\mathbf{u}^{(1)}_\varepsilon\bigr)
	< \cdots < J_\varepsilon\bigl(\mathbf{u}^{(k)}_\varepsilon\bigr)
    < J_\varepsilon\bigl(\mathbf{u}^{(k+1)}_\varepsilon\bigr)
	< \cdots, ~\text{and}~ \lim\limits_{k\to\infty}
	J_\varepsilon\bigl(\mathbf{u}^{(k)}_\varepsilon\bigr)=+\infty.
	\]
\end{Thm}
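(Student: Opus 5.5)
We work in $H=(H^1(\R^N))^m$ with the energy
\[
J_\varepsilon(\mathbf u)=\frac12\sum_{i}\|u_i\|^2-\frac1{2p}\sum_i\mu_i\int Q_\varepsilon(x-y_i)|u_i|^{2p}-\frac1{2p}\sum_{i\ne j}\lambda_{ij}\int|u_i|^p|u_j|^p .
\]
Since $\lambda_{ij}<0$, the coupling term enters $J_\varepsilon$ with a plus sign. The plan is to constrain $J_\varepsilon$ to a \emph{fully sign-changing Nehari set}
\[
\mathcal N_\varepsilon=\{\mathbf u\in H:\ u_i^\pm\neq0,\ \partial_{u_i^\pm}J_\varepsilon(\mathbf u)u_i^\pm=0,\ i=1,\dots,m\},
\]
consisting of $2m$ scalar constraints, and then run a Ljusternik--Schnirelmann scheme with Krasnosel'skii genus $\gamma$ on symmetric subsets. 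Everything is invariant under $\mathbf u\mapsto-\mathbf u$, which swaps $u_i^+$ and $u_i^-$. Note that $\int|u_i|^p|u_j|^p$ splits into the four terms $\int|u_i^{\pm}|^p|u_j^{\pm}|^p$, and the constraint for $u_i^\pm$ reads
\[
\|u_i^\pm\|^2=\mu_i\int Q_\varepsilon|u_i^\pm|^{2p}+\sum_{j\ne i}\lambda_{ij}\int|u_j|^p|u_i^\pm|^p .
\]

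\textbf{Step 1: the reduction map.} For $\mathbf u$ with $\int Q_\varepsilon|u_i^\pm|^{2p}>0$ for all $i,\pm$, I would consider the fibering map $\mathbf t=(t_i^\pm)\in(0,\infty)^{2m}\mapsto J_\varepsilon(\sum_\pm t_i^\pm u_i^\pm)$. The substitution $s=t^p$ turns it into a strictly concave function of the form $\frac12 a\cdot s^{2/p}-\frac1{2p}\,s^TMs$ plus positive cross terms.

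The hard issue is that competition can push $\mathbf t$ to the boundary. So I would restrict to the open set $\mathcal U_\varepsilon$ where the diagonal dominates, for instance
\[
\mu_i\int Q_\varepsilon|u_i^\pm|^{2p}>\sum_{j}|\lambda_{ij}|\int|u_j|^p|u_i^\pm|^p\ \ \text{suitably weighted}.
\]
On $\mathcal U_\varepsilon$ I would show there is a unique interior maximizer $\mathbf t(\mathbf u)$, depending continuously and oddly on $\mathbf u$, and that $\mathcal N_\varepsilon$ is a natural constraint there. The proof would combine:
\begin{itemize}
\item the Lagrange multiplier argument, using the nondegeneracy of the $2m\times 2m$ Jacobian;
\item the deformation-flow argument $\mathbf u\mapsto\mathbf t(\eta(\mathbf u))$.
\end{itemize}
This identifies critical points of $\Psi=J_\varepsilon\circ\mathbf t$ with sign-changing solutions.

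\textbf{Step 2: lower bounds, coercivity and compactness.} On $\mathcal N_\varepsilon$ each constraint gives $\|u_i^\pm\|^2\le\mu_i\|Q\|_\infty\int|u_i^\pm|^{2p}\le C\|u_i^\pm\|^{2p}$, hence $\|u_i^\pm\|\ge\rho>0$. Summing the constraints yields
\[
J_\varepsilon=\Bigl(\frac12-\frac1{2p}\Bigr)\|\mathbf u\|^2,
\]
so Palais--Smale sequences are bounded and $J_\varepsilon\ge c_0>0$ on $\mathcal N_\varepsilon$.

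For compactness I would use that $Q_\varepsilon$ has bounded support $\varepsilon^{-1}(\operatorname{supp}Q+y_i)$. The map $u\mapsto\int Q_\varepsilon|u|^{2p}$ is weakly continuous on $H^1$ by Rellich on bounded sets. The coupling terms are weakly lower semicontinuous with the favorable sign, and the constraint forces the coupling to be controlled by the $Q$-terms. From this I would deduce that a (PS) sequence converges weakly to a limit whose components stay away from zero and remain in $\mathcal N_\varepsilon$, and that norms converge. Concretely, I would pass the constraint to the limit as an inequality, rescale $\mathbf t\le1$, and compare energies. This would give the (PS) condition for $\Psi$ on $\mathcal N_\varepsilon$, with the boundary of $\mathcal U_\varepsilon$ being repelling: along sequences approaching it, $J_\varepsilon\to\infty$ or the constraint degenerates.

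\textbf{Step 3: minimax values.} Set
\[
c_k=\inf\{\sup_A\Psi:\ A\subset\mathcal N_\varepsilon \text{ compact symmetric},\ \gamma(A)\ge k\}.
\]
To show $c_k<\infty$ for every $k$, I would take $k$-dimensional odd families built from test functions with disjoint supports. For each $i$, I would place $k$ disjoint bumps inside $\varepsilon^{-1}(\operatorname{supp}Q+y_i)$. The supports of different components can also be taken disjoint near distinct $y_i$ when $\varepsilon$ is small; for general $\varepsilon$ I would use disjoint small balls inside the region where $Q>0$ around each $-y_i/\varepsilon$. The coupling then vanishes and the reduction is explicit. Ensuring each component keeps both a positive and a negative part along the whole genus-$k$ family is arranged via the map $(\alpha_1,\dots,\alpha_{k+1})\mapsto$ sign-splitting, as in Castro--Cossio--Neuberger type constructions.

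The standard genus deformation lemma would then give critical values $c_1\le c_2\le\cdots$. If $c_k=c_{k+1}$, the critical set has genus $\ge2$ and is therefore infinite; from there one can extract a strictly increasing subsequence of distinct critical values. Finally $c_k\to\infty$, because the critical set at each level is compact and a bounded sequence of levels would contradict the finite genus of a compact set together with the infinite-dimensionality; this is the usual argument through finite-codimension subspaces intersecting sets of large genus.

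\textbf{Main obstacle.} The main obstacle is Step 1 together with the compactness part of Step 2: with $2m$ simultaneous constraints and negative couplings, one must keep the fibering maximizer interior and continuous on a symmetric open set that is invariant under the flow. I would attack this first, via strict concavity in $s=t^p$ and diagonal dominance.
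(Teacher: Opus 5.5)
Your set-up is better than the paper's on two points. The paper (Lemma~\ref{lem2.2}, Definition~\ref{def2.2}) computes $(t_i^+,t_i^-)$ for each component with the other components frozen at $u_j$, then rescales all components at once on $\mathcal A$, where only $u_i^\pm\neq0$ is imposed. The coupling integrals change once the $u_j$ are rescaled, and \eqref{eq2.6} has no positive zero when $\int Q_\varepsilon|u_i^\pm|^{2p}=0$, so that map does not land in $\mathcal N^{\mathrm{sc}}_\varepsilon$ in general. You instead use a joint $2m$-dimensional fibre on the open symmetric set where $M(\mathbf u)$ is positive definite; here $M(\mathbf u)$ has diagonal $\mu_i\int Q_\varepsilon|u_i^\sigma|^{2p}$ and off-diagonal $-|\lambda_{ij}|\int|u_i^\sigma|^p|u_j^\tau|^p$. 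Together with test bumps inside $\{Q_\varepsilon(\cdot-y_i)>0\}$, this is the right way to make the projection land in $\mathcal N_\varepsilon$. (These sets all contain a common neighbourhood of $x=0$, since $Q(-y_i)=\|Q\|_\infty$ for every $i$, so they are not neighbourhoods of distinct points; your fallback with disjoint small balls still works.)

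Step~1 as you justify it fails for $1<p<2$, which is the whole admissible range when $N\ge4$. With $a_k=\|u_i^\sigma\|_{H^1}^2$ for $k=(i,\sigma)$, the fibre function is $\frac12\sum_k a_ks_k^{2/p}-\frac1{2p}\langle Ms,s\rangle$. For $p<2$ the $(k,k)$ Hessian entry $\frac1p\bigl((\frac2p-1)a_ks_k^{2/p-2}-M_{kk}\bigr)$ tends to $+\infty$ as $s_k\to0^+$. So there is no strict concavity to give uniqueness, continuity and oddness of $\mathbf t(\mathbf u)$. Existence of an interior maximizer survives, and uniqueness can be repaired as follows.
Critical points solve $s=M^{-1}g(s)$ with $g_k(s)=a_ks_k^{\beta}$ and $\beta=\frac2p-1\in(0,1)$. $M^{-1}$ has nonnegative entries, because a positive definite matrix with nonpositive off-diagonal entries is an M-matrix. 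Also $g$ is order-preserving and $\beta$-homogeneous. If $s,\tilde s$ are positive solutions and $\theta=\min_k\tilde s_k/s_k<1$, then $\tilde s\ge\theta^\beta s$, contradicting the choice of $\theta$. Hence $\tilde s\ge s$, and by symmetry $\tilde s=s$.

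The more serious gap is getting from the constraint to free critical points, and above all the deformation lemma on $\mathcal N_\varepsilon$ that the genus scheme needs. A Lagrange multiplier rule with a nondegenerate $2m\times2m$ Jacobian presupposes that $\mathbf u\mapsto\langle J_\varepsilon'(\mathbf u),u_i^\pm\rangle$ is $C^1$. But this map contains $\|u_i^\pm\|_{H^1}^2$, and $u\mapsto u^\pm$ is neither differentiable nor even Lipschitz on $H^1$. In one dimension take $w(x)=x\phi(x)$ and $z(x)=(x+\tau)\phi(x)$ with a cut-off $\phi$ equal to $1$ near $0$. Then $\|w-z\|_{H^1}=O(\tau)$, while $(w^+-z^+)'=-\mathbf 1_{(-\tau,0)}$ near $0$ has $L^2$-norm $\sqrt\tau$. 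So $\mathcal N_\varepsilon$ is not known to be a $C^1$ manifold, nor is your reduced functional $\Psi$ known to be $C^1$. The paper asserts exactly this smoothness (Lemmas~\ref{lem2.4} and \ref{lem2.5}(b)) from the Lipschitz claim in Section~\ref{sec:framework}, so it cannot be borrowed.

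Your alternative, flowing along a pseudo-gradient of $J_\varepsilon$ and re-projecting, does not by itself lower $\sup_A J_\varepsilon$. Re-projection maximizes over the fibre of $\eta(\mathbf u)$, and the flow does not commute with the independent rescalings of the $u_i^\pm$. Nothing in the proposal places the re-projected set below $c_k-\delta$. An odd, continuous, energy-lowering deformation of compact subsets of $\mathcal N_\varepsilon$ at non-critical levels is precisely the missing ingredient.

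Relatedly, the boundary is not repelling as you claim. $\Psi$ is invariant under $u_i^\pm\mapsto\tau u_i^\pm$ for $\tau>0$, so it stays bounded as $u_i^\pm\to0$ along such rays; the paper's Remark~\ref{rem2.3} has the same flaw. What is true, and what a deformation should exploit, is that on $\mathcal N_\varepsilon$ the constraints say exactly $M(\mathbf v)\mathbf 1=(\|v_i^\sigma\|_{H^1}^2)_{i,\sigma}\ge\rho^2\mathbf 1$. Hence bounded parts of $\mathcal N_\varepsilon$ have a uniform neighbourhood inside your diagonally dominant set.

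Two smaller points. In Step~2, ``pass the constraint to the limit as an inequality, rescale $\mathbf t\le1$ and compare energies'' only works at the least level. At $c_k$ you need the sequence to be Palais--Smale for the free functional, which is again the natural-constraint step. Then the weak limit $\mathbf v$ is a critical point. Since $\|\mathbf v_n\|^2+\sum_{i\neq j}|\lambda_{ij}|\int|v_{n,i}|^p|v_{n,j}|^p=\sum_i\mu_i\int Q_\varepsilon|v_{n,i}|^{2p}+o(1)$, Rellich on the bounded support of $Q_\varepsilon$ and the same identity for $\mathbf v$ show that the left side converges to its value at $\mathbf v$. Weak lower semicontinuity of both terms then forces $\|\mathbf v_n\|\to\|\mathbf v\|$. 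This is simpler than the Lions-lemma and translation argument of Lemma~\ref{lem3.1}.

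Your intersection argument for $c_k\to\infty$ is sound once combined with the compactness of $H^1(\mathbb R^N)\hookrightarrow L^{2p}(\operatorname{supp}Q_\varepsilon)$. The remark about $c_k=c_{k+1}$ is unnecessary: strictly increasing energies follow from $c_k\to\infty$ by passing to a subsequence.
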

\par
To treat sign-changing solutions of competitive Schr\"odinger systems with shrinking nonlinear potentials, we develop a variational framework tailored to the nodal setting of \eqref{eq1.1}, inspired by the ideas in
\cite{AckermannSzulkin2013,ClappSaldanaSzulkin2025} but adapted to the present
system. We work in the product space $H:=(H^1(\mathbb{R}^N))^m$ endowed with the natural norm, and we consider the energy functional associated with
\eqref{eq1.1} $J_\varepsilon:H\to\mathbb{R}$. Our first step is to introduce a fully sign-changing Nehari set
$\mathcal{N}^{\mathrm{sc}}_\varepsilon$ (see \S \ref{sec:framework}), by imposing Nehari type constraints
separately on the positive and negative parts of each component. On a suitable open
subset $\mathcal{A}$ of $H$, rather than on a
linear subspace, we define a nodal projection
\[
m_\varepsilon:\mathcal{A}\longrightarrow
\mathcal{N}^{\mathrm{sc}}_\varepsilon,
\]
and we use it to construct an even reduced functional
$\Psi_\varepsilon:=J_\varepsilon\circ m_\varepsilon$ on
$\mathcal{A}$. The symmetry of $\Psi_\varepsilon$ allows us to apply
Krasnosel'skii genus theory and to implement a symmetric minimax scheme, which provide infinitely many critical values and
	yield a sequence of nodal solutions with unbounded energies.
This feature is essential for applying genus theory in a genuinely nodal
setting and allows us to recover a symmetric minimax structure despite the strong
nonlinearity of the constraints. The proof of Theorem~\ref{thm1.1} will be carried out in Section~\ref{sec:existence}.

\medskip

The second result in this paper concerns the asymptotic behavior of the above sign-changing solutions as $\varepsilon\to 0^+$. After a suitable rescaling, the potentials $Q_\varepsilon$ converge to the constants $Q(-y_i)$, and one is naturally led to the limiting autonomous system
\begin{equation}\label{eq1.2}
	\begin{cases}
	-\Delta U_i + U_i
	= \mu_i Q(-y_i)\,|U_i|^{2p-2}U_i
	+ \sum_{j\neq i}\lambda_{ij}|U_j|^p|U_i|^{p-2}U_i,\\[6pt]
	 U_i\in H^1(\mathbb{R}^N),\ i=1,\dots,m.
\end{cases}
\end{equation}
The energy functional associated with
\eqref{eq1.2} is denoted by $J_0(\mathbf{u}): H\to\mathbb{R}$.
\begin{Thm}\label{thm1.2}
	Assume $N\ge1$, $m\ge2$, and \textup{(A$_1$)--(A$_3$)}.
	Let $\{\varepsilon_n\}\subset(0,+\infty)$ be a sequence with $\varepsilon_n\to 0$.
	For each fixed $k\in\mathbb{N}^*$, let $\{\mathbf{u}^{(k)}_{\varepsilon_n}\}$ be the sequence of
	sign-changing solutions given by Theorem~\ref{thm1.1}.
	Then there exist $m$ sequences $\{x^{(k)}_{\varepsilon_n,i}\}\subset\mathbb{R}^N$ and a sign-changing solution
	$\mathbf{U}^{(k)}=(U^{(k)}_1,\dots,U^{(k)}_m)\in H$ of the limiting system \eqref{eq1.2}
	such that, up to a subsequence,
	\begin{itemize}
		\item[(i)]
		$\varepsilon_n x_{\varepsilon_n,i}^{(k)}\to 0$ as $n\to\infty$ for $i=1,\dots,m$.
	In particular, for every $R>0$,
		\[
		Q\bigl(\varepsilon_n(\,\cdot+x_{\varepsilon_n,i}^{(k)})-y_i\bigr)\to Q(-y_i)
		~\text{uniformly in}~B_R(0),~i=1,\dots,m.
		\]
		
		\item[(ii)]
		$u^{(k)}_{\varepsilon_n,i}(\,\cdot + x^{(k)}_{\varepsilon_n,i})
		\longrightarrow U^{(k)}_i$ in $H^1(\mathbb{R}^N)$ as $n\to\infty$ for  $i=1,\dots,m$. 
		
	\item[(iii)]
	for every $q\in[1,\infty)$,
	\[
	\lim_{R\to+\infty}\,\limsup_{n\to\infty}
	\int_{\mathbb{R}^N\setminus B_{R/{\varepsilon_n}}(0)}
	|u^{(k)}_{\varepsilon_n, i}(x)|^q\,dx = 0,
	~ i=1,\dots,m,
	\]
	and for $q=\infty$,
	\[
	\lim_{R\to+\infty}\,\limsup_{n\to\infty}
	\|u^{(k)}_{\varepsilon_n, i}\|_{L^\infty(\mathbb{R}^N\setminus B_{R/{\varepsilon_n}}(0))} = 0,
	~ i=1,\dots,m.
	\]
		
		\item[(iv)]
		$\{\mathbf{U}^{(k)}\}$ are pairwise distinct and satisfy
		\[
		J_0\big(\mathbf{U}^{(1)}\big) < J_0\big(\mathbf{U}^{(2)}\big) < \cdots \to +\infty.
		\]
	\end{itemize}
\end{Thm}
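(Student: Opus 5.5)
The strategy is to pass to the limit in the minimax characterization of the critical levels. Write $c_k(\varepsilon):=J_\varepsilon(\mathbf u^{(k)}_\varepsilon)$ for the genus minimax levels from Theorem~\ref{thm1.1}, defined through $\Psi_\varepsilon=J_\varepsilon\circ m_\varepsilon$ on $\mathcal A$. Let $c_k(0)$ be the analogous genus levels of $\Psi_0=J_0\circ m_0$ for the limiting system \eqref{eq1.2}, built on the fully sign-changing Nehari set $\mathcal N^{\rm sc}_0$.

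\textbf{Step 1: uniform energy bounds.} First I bound $c_k(\varepsilon_n)$ above and below, uniformly in $n$.

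For the upper bound, take an odd compact set $K\subset\mathcal A$ of genus $\ge k$ that almost realizes $c_k(0)$. Its elements may be taken compactly supported, and I place the $i$-th component near $y_i/\varepsilon_n$. Since $Q\le \|Q\|_\infty=Q(-y_i)$, we get $J_\varepsilon(t\mathbf u)\ge J_0(t\mathbf u)$ componentwise, hence $\Psi_{\varepsilon_n}\ge\Psi_0$. Because $Q_{\varepsilon_n}\to Q(-y_i)$ uniformly on the translated supports, $\sup_K\Psi_{\varepsilon_n}\to\sup_K\Psi_0$. The components at $y_i/\varepsilon_n$ are mutually far apart, so the coupling terms vanish for large $n$. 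This gives $\limsup_n c_k(\varepsilon_n)\le c_k(0)$.

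For the lower bound, $c_k(\varepsilon_n)\ge c_k(0)$ directly, using $\Psi_{\varepsilon}\ge\Psi_0$ and the fact that $m_\varepsilon$, $m_0$ are defined on the same set $\mathcal A$. Hence $c_k(\varepsilon_n)\to c_k(0)$.

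In addition, the Nehari identities on each $u_i^\pm$ give $\|u_i^\pm\|^2\le C|u_i^\pm|_{2p}^{2p}$, because the competitive terms are nonpositive. This yields a uniform positive lower bound $\|u^{(k)\pm}_{\varepsilon_n,i}\|\ge\delta>0$ and boundedness in $H$.

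\textbf{Step 2: localization and convergence.} Lions' lemma, applied to each $u^{(k)\pm}_{\varepsilon_n,i}$, rules out vanishing and gives points $x_{\varepsilon_n,i}$ at which the translated component has a nonzero weak limit $U_i$.

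For (i), the right-hand side contains $Q_\varepsilon$. If $\varepsilon_n x_{\varepsilon_n,i}$ stayed away from $\operatorname{supp}Q(\cdot)+\dots$, the mass near $x_{\varepsilon_n,i}$ could not be supported, since there is no self-focusing outside the support. So $\varepsilon_nx_{\varepsilon_n,i}$ tends to a point $z$ with $Q(z-y_i)>0$. The energy comparison with $c_k(0)$ then forces $Q(z-y_i)=\|Q\|_\infty$, i.e.\ $z$ is a maximum point of $Q(\cdot-y_i)$; I would like to identify this with $z=0$ via $(A_3)$, where the maxima sit at $-y_j$. Here I expect to need to select the correct core for each component and to argue, as in Ackermann--Szulkin, that splitting among several cores costs strictly more energy than $c_k(0)$.

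Passing to the limit in the translated equations gives that $\mathbf U$ solves \eqref{eq1.2}. Then $J_0(\mathbf U)\le\liminf J_{\varepsilon_n}=c_k(0)$, while $\mathbf U\in\mathcal N^{\rm sc}_0$; a Brezis--Lieb splitting of the norms then forces strong convergence in $H^1$, which is (ii). The sign-changing property of $\mathbf U$ follows from the $\delta$-bound, applied separately to the positive and negative parts.

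\textbf{Step 3: tails and distinct limits.} For (iii), strong convergence of the translates together with $\varepsilon_nx_{\varepsilon_n,i}\to0$ gives the $L^q$ tail estimates for $q\in[2,2^*)$. For $q<2$ I would use the exponential decay of the $U_i$ together with uniform decay estimates from a comparison argument on $|u_i|$ (a Kato inequality, with the coupling term $\le0$). For $q=\infty$, Moser iteration on unit balls outside $B_{R/\varepsilon_n}$ works because $Q_{\varepsilon_n}$ vanishes there for $R$ large.

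For (iv), $J_0(\mathbf U^{(k)})=c_k(0)$. The strict increase and unboundedness of $c_k(0)$ come from the genus argument of Theorem~\ref{thm1.1}, applied at $\varepsilon=0$, provided the strictness established there transfers to $c_k(0)$.

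The main obstacle is the matching lower bound combined with the concentration, i.e.\ showing that no energy is lost to a second bump in a multibump splitting. I would first try a profile decomposition and use the strict monotonicity $c_{k}(0)<c_{k+1}(0)$ to exclude extra pieces.
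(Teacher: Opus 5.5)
\textbf{Gap 1: the attraction points in Step~1 are misplaced.} Since $Q_\varepsilon(x-y_i)=Q(\varepsilon x-y_i)$, assumption $(A_3)$ puts the maxima of this coefficient exactly at $x=(y_i-y_j)/\varepsilon$, $j=1,\dots,m$. In particular $x=0$ is a maximum point for all components at once. At $x=y_i/\varepsilon_n$ the coefficient is $Q(0)$, which is in general below $\|Q\|_\infty$; it equals $0$ in Example~\ref{exam1.1} whenever all $|y_i|\ge d/4$. So $Q_{\varepsilon_n}\to Q(-y_i)$ fails on your translated supports. Pulling the components apart also deletes the competitive terms, so you would be comparing with a decoupled functional rather than with $\Psi_0$ on $K$. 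It also contradicts your own target $z=0$. The paper's upper bound uses no translation: it keeps a fixed compact $A_\delta$ and combines the locally uniform convergence $Q(\varepsilon_n x-y_i)\to Q(-y_i)$ with the uniform tails of a compact set and $m_{\varepsilon_n}\to m_0$, see \eqref{eq5.12}--\eqref{eq5.15}.

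Your lower bound via $J_\varepsilon\ge J_0$ is a good, simpler replacement for the paper's uniform-convergence claim on the merely bounded sets $A_n$. You do need to justify that $\Psi_\varepsilon(\mathbf u)$ is the maximum of $J_\varepsilon$ over the fiber of $\mathbf u$.

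\textbf{Gap 2: the core of (i), $z=0$, is left open.} The Ackermann--Szulkin mechanism you invoke points the wrong way here. Putting different components on different cores $(y_i-y_j)/\varepsilon_n$, where each still sees $\|Q\|_\infty$, removes the nonnegative terms $-\frac{1}{2p}\lambda_{ij}\int|u_i|^p|u_j|^p$, so it does not raise the energy. Likewise, separate Lions centers for each $u_i^\pm$ yield a solution of the coupled system \eqref{eq1.2} only if the centers stay at bounded mutual distance. The paper instead uses one common center $x_n$ for all components, so the limit solves \eqref{eq5.24} with coefficients $Q(\xi-y_\ell)$. By \eqref{eq5.23} (a finite set with $\{y_\ell\}-\xi=\{y_\ell\}$ forces $\xi=0$), a nonzero $\xi$ strictly lowers some coefficient. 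Then $J_0(t_0\mathbf U)<J_\xi(\mathbf U)\le c_{0,k}$ is played against the minimax characterization of $c_{0,k}$. That argument needs every $U_\ell\neq 0$ under the common translation, which is exactly where competitive separation has to be excluded.

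\textbf{Gap 3: strong convergence in (ii).} The facts $J_0(\mathbf U)\le c_k(0)$ and $\mathbf U\in\mathcal N_0^{\mathrm{sc}}$ give no lower bound $J_0(\mathbf U)\ge c_k(0)$. So Br\'ezis--Lieb cannot kill the remainder, and $c_k(0)<c_{k+1}(0)$ does not exclude a remainder carrying energy $c_k(0)-J_0(\mathbf U)$.

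For $k=1$ this obstacle cannot be removed by any argument. Let $e_i$ be the ground state level of $-\Delta w+w=\mu_iQ(-y_i)|w|^{2p-2}w$.
\begin{itemize}
\item For $\mathbf U\in\mathcal N_0^{\mathrm{sc}}$, the analogue of \eqref{eq2.3} and $\lambda_{ij}<0$ give $\|U_i^\pm\|_{H^1}^2\le\mu_iQ(-y_i)\int_{\mathbb R^N}|U_i^\pm|^{2p}dx$. Hence $\frac{p-1}{2p}\|U_i^\pm\|_{H^1}^2\ge e_i$.
\item Equality would force $|U_i^\pm|$ to be a ground state. That is impossible: ground states are positive everywhere, while $U_i^\pm$ vanishes on a set of positive measure.
\item With Lemma~\ref{lem2.5}(d) this gives $J_0>\sum_i 2e_i$ on $\mathcal N_0^{\mathrm{sc}}$.
\item On the other hand, $2m$ disjointly supported, far-apart truncated ground states show $\inf_{\mathcal N_0^{\mathrm{sc}}}J_0=\sum_i 2e_i$.
\end{itemize}
Now $c_{0,1}=\inf_{\mathcal N_0^{\mathrm{sc}}}J_0$ and $\limsup_n c_{\varepsilon_n,1}\le c_{0,1}$. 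Strong convergence would therefore produce a fully sign-changing solution with $J_0(\mathbf U)=\frac{p-1}{2p}\|\mathbf U\|^2=\lim_n c_{\varepsilon_n,1}\le c_{0,1}$, a contradiction. The paper's brief Br\'ezis--Lieb step in the proof of (ii) does not address this either.

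\textbf{Gap 4: (iv) cannot rest on Theorem~\ref{thm1.1} at $\varepsilon=0$.} $\Psi_0$ is translation invariant and fails (PS). The paper substitutes compactness of critical orbits modulo $\mathbb{Z}^N$ (Lemma~\ref{Lem5.1}, Theorem~\ref{Thm5.2}), and gets strict monotonicity only by passing to a subsequence of levels. However, the computation above shows that $c_{0,1}$ is not even a critical value, so that route does not close the gap.

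Your treatment of the tails in (iii) is fine.
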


\begin{Rem}\label{rem1.2}
	Theorem~\ref{thm1.2} extends the concentration results for nonnegative solutions obtained in
	\cite{ClappSaldanaSzulkin2025} to the sign-changing case.
	Assertions \textup{(i)}--\textup{(iii)} show that, after suitable translations, each component concentrates
	around the origin, while assertion \textup{(iv)} reveals a new phenomenon: the limiting autonomous system
	inherits infinitely many sign-changing solutions with unbounded energies, arising from the genus-based
	construction for the original problem. 
\end{Rem}

The proof of Theorem~\ref{thm1.2} is based on a concentration--compactness argument for suitable
translations of sign-changing solutions $\{\mathbf u_n\}$, which yields a nontrivial limiting profile solving
\eqref{eq1.2} and the strong convergence in (ii) via a Br\'ezis--Lieb type energy
decomposition.
The decay property in (iii) follows from localized estimates with cut--off functions, interpolation,
and Agmon-type exponential decay bounds; the case $q=\infty$ is obtained by a translation argument
combined with local elliptic regularity and the compact support of $Q$.
It is carried out in Section~\ref{sec:concentration}.

The paper is organized as follows.
In Section~\ref{sec:framework} we introduce the variational setting, the fully nodal Nehari set and the reduced functional, and state our assumptions on the nonlinear potentials.
In Section~\ref{sec:ps} we establish a compactness result for the reduced functional $\Psi_{\varepsilon}$, which will be crucial in the construction of sign-changing solutions.
Section~\ref{sec:existence} is devoted to a genus-based minimax scheme on $\mathcal N_{\varepsilon}^{\mathrm{sc}}$, and hence obtain infinitely many sign-changing critical points. In Section~\ref{sec:concentration} we prove the concentration result Theorem~\ref{thm1.2}, including the
description of the limiting profiles for the semiclassical states.
\medskip

\textbf{Notation.}
\begin{itemize}
	\item Let $w^+(x):=\max\{w(x),0\}$ and $w^-(x):=\min\{w(x),0\}$ be the positive and negative parts of $w$, respectively. 
	\item For $r>0$ and $x\in\mathbb{R}^N$, $B_r(x)$ is the open ball centered at $x$ with radius $r$.
    \item Boldface $\mathbf{u}$ denote a $\mathbb{R}^m$ vector-value function, $u_i$ denotes the $i$-th component function of $\mathbf{u}$. 
    \item "$\to$" denotes the strong convergence and "$\rightharpoonup$" denotes the weak convergence.
    \item For $k\in\Z^N$, we denote by $k*\mathbf u$ the integer translation of $\mathbf u$:
    \[
    (k*\mathbf u)(x):=\mathbf u(x-k),~ x\in\R^N .
    \]
    \item For a function space $E$, $E^m$ denotes the product space $\underbrace{E\times\cdots\times E}_{m}$.
  \end{itemize}

\section{Variational setting and the fully sign-changing Nehari constrain}\label{sec:framework}
\par
In this section, we introduce the variational framework and the fully sign-changing  Nehari set which will be used in the construction of sign-changing solutions. For $\varepsilon>0$, we define the family of shrinking potentials
\[
Q_\varepsilon(x-y_i):=Q(\varepsilon x-y_i), ~ i=1,\dots,m.
\]
Under \textbf{$(A_3)$}, the supports of $Q_\varepsilon(\cdot-y_i)$ concentrate and shrink to the
points $y_i$ as $\varepsilon\to 0^+$ for $i=1,2,\cdots,m$, creating $m$ distinct regions of attraction.
Let $H^1(\mathbb{R}^N)$ be the usual Sobolev space with the norm 
\[
\|v\|_{H^1}^2
:=\int_{\mathbb{R}^N}(|\nabla v|^2+|v|^2)dx,~v\in H^1(\mathbb{R}^N).
\]
 As we mentioned in Section~\ref{introduction}, we will work in the product Hilbert space
$H:=H^1(\mathbb{R}^N)^m$
equipped with the norm
\begin{equation*}
	\|\mathbf{u}\|^2
	:=
	\sum_{i=1}^m\int_{\mathbb{R}^N}\bigl(|\nabla u_i|^2+|u_i|^2\bigr)\,dx~\text{for}~\mathbf{u}=(u_1,\dots,u_m)\in H.
\end{equation*}
By the Sobolev embedding theorem, \textbf{$(A_1)$} implies that the embedding
\[
H^1(\mathbb{R}^N) \hookrightarrow L^{2p}(\mathbb{R}^N)
\]
is continuous and locally compact. The energy functional associated with \eqref{eq1.1} is 
\begin{equation}\label{eq2.1}
	\begin{aligned}
		J_\varepsilon(\mathbf{u})
		&= \frac12\|\mathbf{u}\|^2
		- \frac{1}{2p}\sum_{i=1}^m \mu_i
		\int_{\mathbb{R}^N} Q_\varepsilon(x-y_i)\,|u_i|^{2p}\,dx \\
		&\phantom{=}\;
		- \frac{1}{2p}\sum_{i\neq j}\lambda_{ij}
		\int_{\mathbb{R}^N}|u_i|^p |u_j|^p\,dx,~\mathbf{u}=(u_1,\dots,u_m)\in H.
	\end{aligned}
\end{equation}
Assumptions \textbf{$(A_1)$}--\textbf{$(A_3)$} and the embedding above imply that $J_\varepsilon$ is
well defined and of class $C^1(H,\mathbb{R})$. Moreover, $J_\varepsilon$ is even, i.e.\
$J_\varepsilon(-\mathbf{u})=J_\varepsilon(\mathbf{u})$ for all $\mathbf{u}\in H$.

A standard computation shows that 
\begin{equation}\label{eq2.2}
\begin{aligned}
	\langle J_\varepsilon^\prime(\mathbf{u}),\boldsymbol{\varphi}\rangle
	&= \sum_{i=1}^m \int_{\mathbb{R}^N}
	\bigl(\nabla u_i\cdot\nabla\varphi_i + u_i\varphi_i\bigr)\,dx
	- \sum_{i=1}^m \mu_i\int_{\mathbb{R}^N}
	Q_\varepsilon(x-y_i)|u_i|^{2p-2}u_i\varphi_i\,dx \\
	&\phantom{=}\;
	- \sum_{i\neq j}\lambda_{ij}\int_{\mathbb{R}^N}
	|u_j|^{p}|u_i|^{p-2}u_i\,\varphi_i\,dx,
\end{aligned}
\end{equation}
for all $\mathbf{u},\boldsymbol{\varphi}\in H$. Hence $\mathbf{u}\in H$ is a critical point of $J_\varepsilon$ if and only if it is a
weak solution of \eqref{eq1.1}. In particular, sign-changing solutions of
\eqref{eq1.1} correspond to sign-changing critical points of $J_\varepsilon$.
\par
Recall the standard Nehari manifold is defined by
\[
\mathcal{N}_\varepsilon
:=\{\mathbf{u}\in H\setminus\{\mathbf{0}\}:\langle J_\varepsilon^\prime(\mathbf{u}), \mathbf{u}\rangle=0\}.
\]
It is obvious that $\mathcal{N}_\varepsilon$ contains all nontrivial solutions of \eqref{eq1.1}, and it is usually used for seeking for positive or nonnegative solutions (see e.g. \cite{ClappSaldanaSzulkin2025} and \cite{Willem1996}). But this manifold does not work well for sign-changing solutions since the constrained minimizer on it not necessary to be sign-changing.  
To capture sign-changing solutions, we introduce a sign-changing analogue that imposes
orthogonality conditions separately on the positive and negative parts of each component. 
For $\mathbf{u}=(u_1,\ldots,u_m)\in H$, we denote by $u_i^\pm$ the positive and negative
parts of the $i$-th component $u_i$.
We start with an energy splitting formula for sign-decomposed functions.
\par
\begin{Rem}\label{rem2.1}
	Let $\mathbf u=(u_1,\dots,u_m)\in H$ and write $u_i=u_i^+ + u_i^-$ with $u_i^+u_i^-=0$ a.e.\ in $\mathbb R^N$,
	$i=1,\dots,m$. Set $\mathbf u^+=(u_1^+,\dots,u_m^+)$ and $\mathbf u^-=(u_1^-,\dots,u_m^-)$.
	Then
	\[
	J_\varepsilon(\mathbf u)
	=
	J_\varepsilon(\mathbf u^+)+J_\varepsilon(\mathbf u^-)
	-\frac1p\sum_{i\neq j}\lambda_{ij}\int_{\mathbb R^N}
	\Bigl(|u_i^+u_j^-|^p+|u_i^-u_j^+|^p\Bigr)\,dx .
	\]
\end{Rem}

To simplify the notations, we identify $u_i^\pm$ with the element
$(\underbrace{0,\ldots,0,u_i^\pm,0,\ldots,0}_{m})\in H$, and write
\[
\langle J^\prime_\varepsilon(\mathbf{u}),u_i^\pm\rangle
:= \langle J^\prime_\varepsilon(\mathbf{u}),(0,\ldots,0,u_i^\pm,0,\ldots,0)\rangle.
\]
\begin{Def}\label{def2.1}
	For $\varepsilon>0$ and $i\in\{1,\dots,m\}$, set
	\[
	\mathcal{N}^{\mathrm{sc}}_{\varepsilon,i}
	:=\Bigl\{\mathbf{u}\in H:\ u_i^+\neq 0,\ u_i^-\neq 0,\,	\langle J_\varepsilon^\prime(\mathbf{u}),u_i^+\rangle=\langle J_\varepsilon^\prime(\mathbf{u}),u_i^-\rangle=0\Bigr\}
	\]
	and define the fully sign-changing Nehari set by
	\[
	\mathcal{N}^{\mathrm{sc}}_\varepsilon
	:=\bigcap_{i=1}^m \mathcal{N}^{\mathrm{sc}}_{\varepsilon,i}.
	\]
\end{Def}
It is obvious that $\mathcal{N}^{\mathrm{sc}}_\varepsilon$ contains all sign-changing solutions $\mathbf{u}=(u_1,\cdots,u_m)$ with each component $u_i$ is sign-changing.  We will show below that
$\mathcal{N}_\varepsilon^{\mathrm{sc}}$ is nonempty (see Lemma~\ref{lem2.2}).
\par
By \eqref{eq2.2}, 
$\langle J_\varepsilon^\prime(\mathbf{u}),u_i^\pm \rangle=0$ can be written as
\begin{equation}\label{eq2.3}
	\|u_i^\pm\|_{H^1}^2
	= \mu_i\int_{\mathbb{R}^N}Q_\varepsilon(x-y_i)|u_i^\pm|^{2p}\,dx
	+ \sum_{j\neq i} \lambda_{ij}
	\int_{\mathbb{R}^N}|u_j|^p|u_i^\pm|^p\,dx,
\end{equation}
for each $i=1,\dots,m$.
Next we establish a coercivity estimate on $\mathcal{N}^{\mathrm{sc}}_\varepsilon$.

\begin{Lem}\label{lem2.1}
	There exists a constant $\alpha>0$, independent of $\varepsilon$, such that for every
	$\mathbf{u}\in\mathcal{N}^{\mathrm{sc}}_\varepsilon$ one has
	\[
	J_\varepsilon(\mathbf{u})\;\ge\;\alpha\,\|\mathbf{u}\|^2.
	\]
\end{Lem}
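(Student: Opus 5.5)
Since every $\mathbf u\in\mathcal N^{\mathrm{sc}}_\varepsilon$ satisfies the constraints \eqref{eq2.3} for both signs, the plan is to sum these identities and use the Nehari relation to rewrite $J_\varepsilon$ as a multiple of $\|\mathbf u\|^2$. Concretely, for each $i$ we add the two identities \eqref{eq2.3} for $u_i^+$ and $u_i^-$. Because $u_i^+u_i^-=0$ a.e., we have $\|u_i\|_{H^1}^2=\|u_i^+\|_{H^1}^2+\|u_i^-\|_{H^1}^2$, $|u_i|^{2p}=|u_i^+|^{2p}+|u_i^-|^{2p}$ and $|u_i|^p=|u_i^+|^p+|u_i^-|^p$. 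The sum of the two identities is therefore exactly $\langle J_\varepsilon'(\mathbf u),(0,\dots,u_i,\dots,0)\rangle=0$, that is,
\[
\|u_i\|_{H^1}^2=\mu_i\int_{\mathbb R^N}Q_\varepsilon(x-y_i)|u_i|^{2p}\,dx+\sum_{j\ne i}\lambda_{ij}\int_{\mathbb R^N}|u_i|^p|u_j|^p\,dx .
\]
Summing this over $i$ gives $\langle J_\varepsilon'(\mathbf u),\mathbf u\rangle=0$, so $\mathcal N^{\mathrm{sc}}_\varepsilon\subset\mathcal N_\varepsilon$.

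Next we substitute this into \eqref{eq2.1}. The total nonlinear term
\[
B(\mathbf u):=\sum_i\mu_i\int Q_\varepsilon(x-y_i)|u_i|^{2p}\,dx+\sum_{i\ne j}\lambda_{ij}\int|u_i|^p|u_j|^p\,dx
\]
equals $\|\mathbf u\|^2$. Hence
\[
J_\varepsilon(\mathbf u)=\tfrac12\|\mathbf u\|^2-\tfrac1{2p}B(\mathbf u)=\Bigl(\tfrac12-\tfrac1{2p}\Bigr)\|\mathbf u\|^2 ,
\]
and we may take $\alpha=\frac{p-1}{2p}>0$ since $p>1$ by $(A_1)$. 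This $\alpha$ depends only on $p$, so it is independent of $\varepsilon$ (and also of $\mu_i$, $\lambda_{ij}$ and $Q$).

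The only point needing care is the bookkeeping of the coupling term, which is where an error is most likely. In \eqref{eq2.1} the coupling is written as $\frac1{2p}\sum_{i\ne j}$ over ordered pairs, so each unordered pair appears twice with coefficient $\frac1{2p}$. In \eqref{eq2.2}, tested against $\mathbf u$, it is $\sum_{i\ne j}\lambda_{ij}\int|u_j|^p|u_i|^p\,dx$, again over ordered pairs. Using $\lambda_{ij}=\lambda_{ji}$, the two expressions carry the same ordered-pair sum, so the substitution is exact. If a mismatch appeared, the fallback would be to use $\lambda_{ij}<0$ together with $Q\ge0$ and $\mu_i>0$ to drop nonpositive terms and still reach a lower bound of the form $c\|\mathbf u\|^2$. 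We also note that the nonvanishing conditions $u_i^\pm\ne0$ play no role in this estimate; they matter only for $\mathcal N^{\mathrm{sc}}_\varepsilon$ being a proper sign-changing set.
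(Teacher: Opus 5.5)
Your proof is correct and follows the paper's route: you sum the sign-wise identities \eqref{eq2.3} to get the full Nehari identity, substitute it into \eqref{eq2.1}, and obtain $\alpha=\frac12-\frac1{2p}$. Your bookkeeping of the coupling term is actually cleaner than the paper's displayed computation in this proof, which keeps a spurious extra term $-\frac1{2p}\sum_{i\neq j}\lambda_{ij}\int_{\mathbb R^N}|u_i|^p|u_j|^p\,dx$; as you show, and as the paper itself finds in Lemma~\ref{lem2.5}(d), the coupling terms cancel exactly, so $J_\varepsilon(\mathbf u)=\bigl(\frac12-\frac1{2p}\bigr)\|\mathbf u\|^2$ on $\mathcal N^{\mathrm{sc}}_\varepsilon$.
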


\begin{proof}
	For $\mathbf{u}\in\mathcal{N}^{\mathrm{sc}}_\varepsilon$, by
	\eqref{eq2.3}, for all $i$ and $\sigma\in\{+,-\}$,
	\[
	\|u_i^\sigma\|_{H^1}^2
	= \mu_i\int_{\mathbb{R}^N}Q_\varepsilon(x-y_i)|u_i^\sigma|^{2p}\,dx
	+ \sum_{j\neq i}\lambda_{ij}\int_{\mathbb{R}^N}|u_j|^p|u_i^\sigma|^p\,dx.
	\]
	Summing over $i$ and both signs and recalling that $u_i=u_i^+ + u_i^-$ yields
	\[
	\|\mathbf{u}\|^2
	= \sum_{i=1}^m \mu_i\int_{\mathbb{R}^N}Q_\varepsilon(x-y_i)|u_i|^{2p}\,dx
	+ \sum_{i\neq j}\lambda_{ij}\int_{\mathbb{R}^N}|u_i|^p|u_j|^p\,dx.
	\]
	Substituting this identity into \eqref{eq2.1}, we obtain
	\[
	\begin{aligned}
		J_\varepsilon(\mathbf{u})
		&= \Bigl(\frac12-\frac{1}{2p}\Bigr)\|\mathbf{u}\|^2
		- \frac{1}{2p}\sum_{i\neq j}\lambda_{ij}\int_{\mathbb{R}^N}|u_i|^p|u_j|^p\,dx \\
		&\ge \Bigl(\frac12-\frac{1}{2p}\Bigr)\|\mathbf{u}\|^2,
	\end{aligned}
	\]
	because $\lambda_{ij}<0$. The conclusion follows with
	$\alpha=\frac12-\frac{1}{2p}>0$.
\end{proof}
 
In particular, the functional $J_\varepsilon$  is coercive on  $\mathcal{N}_\varepsilon^{\mathrm{sc}}$ and controls the $H$-norm from above.
Since $\lambda_{ij}<0$, the right-hand side in \eqref{eq2.3} is strictly smaller
than in the decoupled case, which is favorable for coercivity along $\mathcal{N}^{\mathrm{sc}}_\varepsilon$.
\par
Fix $\mathbf{u}\in H$ such that $u_i^\pm\neq 0$ for all $i=1,\dots,m$. For each fixed $i$ we consider
the two parameter fiber map
\[
\Phi_i(t^+,t^-)
:= J_\varepsilon\bigl(u_1,\dots,t^+u_i^+ + t^- u_i^-,\dots,u_m\bigr),~(t^+,t^-)\in(0,\infty)^2.
\]
Since $u_i^+u_i^-=0$ a.e. in $\mathbb{R}^N$, the decomposition holds
\begin{equation}\label{eq2.4}
	\Phi_i(t^+,t^-)=g_i^+(t^+)+g_i^-(t^-)
+J_\varepsilon(u_1,\dots,u_{i-1},0,u_{i+1},\dots,u_m),
\end{equation}
where, for $\sigma\in\{+,-\}$,
\begin{equation}\label{eq2.5}
	g_i^\sigma(t)
	= \frac12 a_i^\sigma t^2
	- \frac{\mu_i}{2p} b_i^\sigma t^{2p}
	- \frac{1}{2p}\sum_{j\neq i}\lambda_{ij} c_{ij}^\sigma t^p,~t>0,
\end{equation}
with coefficients
\[
a_i^\sigma:=\|u_i^\sigma\|_{H^1}^2,~
b_i^\sigma:=\int_{\mathbb{R}^N}Q_\varepsilon(x-y_i)|u_i^\sigma|^{2p}\,dx,
~
c_{ij}^\sigma:=\int_{\mathbb{R}^N}|u_j|^p|u_i^\sigma|^p\,dx.
\]
Obviously, $a_i^\sigma>0$, $b_i^\sigma\ge0$ and $c_{ij}^\sigma\ge0$.

\begin{Lem}\label{lem2.2}
	Let $\mathbf{u}\in H$ with $u_i^\pm\neq0$ for all $i=1,\dots,m$. Then for each $i$, there exists a
	unique pair $(t_i^+,t_i^-)\in(0,\infty)^2$ such that
	\[
	t_i^+u_i^+ + t_i^-u_i^- \in \mathcal{N}^{\mathrm{sc}}_{\varepsilon,i}.
	\]
Moreover, for $\sigma\in\{+,-\}$, $t_i^\sigma$ is the unique positive zero point of
\begin{equation}\label{eq2.6}
		f_i^\sigma(t)
		:= a_i^\sigma t^{2-p}
		- \mu_i b_i^\sigma t^{p}
		- \frac12 \sum_{j\neq i}\lambda_{ij} c_{ij}^\sigma,~ t>0.
\end{equation}
In particular, $\mathcal N_\varepsilon^{\mathrm{sc}}\neq\varnothing$.
\end{Lem}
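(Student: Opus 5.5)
The proof reduces to a one-variable computation along each fiber via the splitting \eqref{eq2.4}. Fix $i$ and $\sigma\in\{+,-\}$, and write $v=(u_1,\dots,t^+u_i^++t^-u_i^-,\dots,u_m)$. Since $u_i^+u_i^-=0$ a.e. and the other components are unchanged, \eqref{eq2.2} gives
\[
\langle J_\varepsilon'(v),t^\sigma u_i^\sigma\rangle=t^\sigma\,\partial_{t^\sigma}\Phi_i(t^+,t^-)=t^\sigma (g_i^\sigma)'(t^\sigma).
\]
The coefficients $c_{ij}^\sigma$ involve only the unchanged $u_j$, $j\ne i$. So $v\in\mathcal N^{\mathrm{sc}}_{\varepsilon,i}$ exactly when $(g_i^+)'(t^+)=0$ and $(g_i^-)'(t^-)=0$, and the two conditions decouple. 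A direct computation gives
\[
(g_i^\sigma)'(t)=a_i^\sigma t-\mu_i b_i^\sigma t^{2p-1}-\tfrac12\sum_{j\ne i}\lambda_{ij}c_{ij}^\sigma t^{p-1}=t^{p-1}f_i^\sigma(t).
\]
Hence for $t>0$ the critical points of $g_i^\sigma$ are precisely the zeros of $f_i^\sigma$.

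Next I show that $f_i^\sigma$ has exactly one positive zero. Set $C:=-\frac12\sum_{j\neq i}\lambda_{ij}c_{ij}^\sigma\ge0$, which is nonnegative by $(A_2)$, and assume $b_i^\sigma>0$. I split into three cases according to $p$.
\begin{itemize}
\item If $p>2$, then $f_i^\sigma$ is strictly decreasing, being the sum of the decreasing terms $a_i^\sigma t^{2-p}$ and $-\mu_ib_i^\sigma t^p$ plus a constant. It tends to $+\infty$ as $t\to0^+$ and to $-\infty$ as $t\to\infty$, so it has exactly one positive zero.
\item If $p=2$, then $f_i^\sigma(t)=a_i^\sigma+C-\mu_ib_i^\sigma t^2$, which is strictly decreasing from $a_i^\sigma+C>0$ to $-\infty$, so again there is exactly one positive zero.
\item If $1<p<2$, then $f_i^\sigma(0^+)=C\ge0$ and $f_i^\sigma(\infty)=-\infty$. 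The derivative $(f_i^\sigma)'(t)=(2-p)a_i^\sigma t^{1-p}-p\mu_ib_i^\sigma t^{p-1}$ vanishes at exactly one point, so $f_i^\sigma$ first strictly increases and then strictly decreases. It therefore has a single positive zero, located on the decreasing branch.
\end{itemize}
In every case the zero is a strict maximum of $g_i^\sigma$, since $(g_i^\sigma)'$ changes sign from $+$ to $-$ there. This gives existence and uniqueness of $(t_i^+,t_i^-)$.

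The main obstacle is the hypothesis $b_i^\sigma>0$, which the argument genuinely needs. If $Q_\varepsilon(\cdot-y_i)$ vanishes on the support of $u_i^\sigma$, then $f_i^\sigma\ge a_i^\sigma t^{2-p}>0$ for all $t>0$ and no zero exists. So as written the statement must be read with the implicit condition $\int_{\mathbb R^N} Q_\varepsilon(x-y_i)|u_i^\pm|^{2p}dx>0$ for all $i$. This is presumably what the open set $\mathcal A$ mentioned in the introduction encodes, and I will state it explicitly in the proof.

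For $\mathcal N^{\mathrm{sc}}_\varepsilon\ne\varnothing$ there is a second issue. Rescaling component $i$ changes the coefficients $c_{ji}^\sigma$ of the other components, so applying the first part to each $i$ separately does not give a point in the intersection $\bigcap_i\mathcal N^{\mathrm{sc}}_{\varepsilon,i}$. I avoid this by decoupling the components. Since $Q_\varepsilon(0-y_i)=Q(-y_i)=\|Q\|_\infty>0$ and $Q$ is continuous, there is $\delta>0$ with $Q(\varepsilon x-y_i)>0$ on $B_\delta(0)$ for every $i$. Choose $2m$ pairwise disjoint balls in $B_\delta(0)$ and let $u_i^\pm$ be nonzero bumps of the appropriate sign supported in distinct balls. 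Then all $c_{ij}^\sigma=0$ and all $b_i^\sigma>0$. Each pair $(t_i^+,t_i^-)$ is determined independently, and the rescaled vector lies in every $\mathcal N^{\mathrm{sc}}_{\varepsilon,i}$ simultaneously, so it belongs to $\mathcal N^{\mathrm{sc}}_\varepsilon$.
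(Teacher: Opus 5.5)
Your proof follows the paper's route: decouple the two stationarity conditions via \eqref{eq2.4}, reduce to positive zeros of $f_i^\sigma$, and show there is exactly one. It is correct once the hypothesis $b_i^\sigma>0$ is added, and where you deviate from the paper you repair genuine defects in its argument.

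\emph{Concavity.} The paper asserts $f''<0$ for every $p>1$. For $p>2$, however, the term $(2-p)(1-p)a_i^\sigma t^{-p}$ is positive, so concavity fails. Your case split (strict monotonicity for $p\ge 2$, a single critical point for $1<p<2$) is the right argument. It also covers the case $\sum_{j\neq i}\lambda_{ij}c_{ij}^\sigma=0$, where the paper's strict inequality $\liminf_{t\to0}f_i^\sigma(t)>0$ is false for $p<2$.

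\emph{The hypothesis $b_i^\sigma>0$.} You are right that it is indispensable. The paper uses $f_i^\sigma(t)\to-\infty$ without justification. If $Q_\varepsilon(\cdot-y_i)u_i^\sigma\equiv0$, then \eqref{eq2.3} cannot hold for any positive multiple of $u_i^\sigma$, so the lemma as stated is false. Your guess that $\mathcal A$ encodes this condition does not hold: the paper defines $\mathcal A$ only by $u_i^\pm\neq0$. The condition really must be imposed as an extra hypothesis.

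\emph{Nonemptiness.} The paper's ``in particular, $\mathcal N_\varepsilon^{\mathrm{sc}}\neq\varnothing$'' does not follow from the per-$i$ statement, for exactly the reason you give: rescaling $u_i$ changes $c_{ji}^\sigma$. Your disjoint-bump construction inside the region where every $Q(\varepsilon x-y_i)>0$ is a valid proof, and the paper gives none.

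One slip you inherit from the paper concerns the coefficient in \eqref{eq2.5}. The sum $\sum_{i\neq j}$ in \eqref{eq2.1} runs over ordered pairs, as \eqref{eq2.2} confirms. So the coupling energy involving $u_i$ is $-\frac1p\sum_{j\neq i}\lambda_{ij}\int_{\mathbb{R}^N}|u_i|^p|u_j|^p\,dx$, and the coefficient in \eqref{eq2.5} should be $\frac1p$ rather than $\frac1{2p}$. Consequently, \eqref{eq2.3} for $t^\sigma u_i^\sigma$ is equivalent to $a_i^\sigma t^{2-p}-\mu_ib_i^\sigma t^{p}-\sum_{j\neq i}\lambda_{ij}c_{ij}^\sigma=0$, without the factor $\frac12$ that appears in \eqref{eq2.6}. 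Your analysis works verbatim for any constant $C\ge0$, so only the constant changes. Still, you should verify the identity $\langle J_\varepsilon'(v),t^\sigma u_i^\sigma\rangle=t^\sigma (g_i^\sigma)'(t^\sigma)$ directly against \eqref{eq2.3} rather than take it from \eqref{eq2.5}.
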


\begin{proof}
	By \eqref{eq2.4}--\eqref{eq2.5}, the stationarity conditions
	$\partial_{t^\pm}\Phi_i(t^+,t^-)=0$ decouple into the two one-dimensional equations
	$(g_i^\sigma)'(t^\sigma)=0$, $\sigma\in\{+,-\}$. A direct computation yields
	\begin{equation}\label{eq2.7}
		(g_i^\sigma)^\prime(t)
		= a_i^\sigma t - \mu_i b_i^\sigma t^{2p-1}
		- \frac{1}{2}\sum_{j\neq i}\lambda_{ij} c_{ij}^\sigma t^{p-1},~ t>0.
	\end{equation}
	Since $t>0$, dividing \eqref{eq2.7} by $t^{p-1}$ gives exactly \eqref{eq2.6}.
	Thus critical points of $\Phi_i$ correspond to positive zeros of $f_i^\sigma$.

	Fix $i$ and $\sigma$ and consider $f:=f_i^\sigma$ on $(0,\infty)$. 
	Since $p>1$, we have $t^p \to 0$ as $t\to0^+ $. Using
	$a_i^\sigma>0$, $\lambda_{ij}<0$ and $c_{ij}^\sigma\ge0$, we obtain
	\[
	\liminf_{t\to 0} f(t)
	\;\ge\;
	-\frac12 \sum_{j\ne i}\lambda_{ij} c_{ij}^\sigma
	\;>\; 0.
	\]

	In particular, $f(t)>0$ for all $t>0$ sufficiently small.
Hence the continuous function  $f$ has at least one positive zero since $\lim\limits_{t\to\infty}f_i^\sigma(t)=-\infty$.
Direct calculations give
\begin{equation*}
  f^\prime(t)	= (2-p)a_i^\sigma t^{1-p} - \mu_i p b_i^\sigma t^{p-1}
\end{equation*}
and 
\begin{equation*}
  f^{\prime\prime}(t)
	= (2-p)(1-p)a_i^\sigma t^{-p} - \mu_i p(p-1)b_i^\sigma t^{p-2}.
\end{equation*}
	Since $p>1$ and $a_i^\sigma>0$, $b_i^\sigma\ge0$, we have $f''(t)<0$ for all $t>0$, so $f$ is strictly
	concave on $(0,\infty)$ and 
	has exactly one positive zero point $t_i^\sigma$. The pair $(t_i^+,t_i^-)$ is therefore uniquely
	determined and yields the desired element in $\mathcal{N}^{\mathrm{sc}}_{\varepsilon,i}$.
\end{proof}
\par
We now show that all components of an element of $\mathcal{N}^{\mathrm{sc}}_\varepsilon$
cannot be arbitrarily small, in the $H^{1}$-norm sense.

\begin{Lem}\label{lem2.3}
	If $\mathbf{u}\in\mathcal{N}^{\mathrm{sc}}_\varepsilon$, then $u_i^\pm\not\equiv0$ for all
	$i=1,\dots,m$, and there exists a constant $\delta>0$, independent of $\varepsilon$,
	such that
	\[
	\|u_i^\pm\|_{H^1}\;\ge\;\delta.
	\]
\end{Lem}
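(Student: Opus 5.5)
The first assertion, $u_i^\pm\not\equiv0$, is part of Definition~\ref{def2.1}. The plan for the lower bound is to use the Nehari identity \eqref{eq2.3} for each sign part separately. The competitive coupling only makes the right-hand side smaller, so it can be dropped. The nonlinear term is then controlled by the Sobolev embedding, which forces a lower bound on $\|u_i^\pm\|_{H^1}$.

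Fix $i$ and $\sigma\in\{+,-\}$, and let $\mathbf u\in\mathcal N^{\mathrm{sc}}_\varepsilon$. By \eqref{eq2.3},
\[
\|u_i^\sigma\|_{H^1}^2=\mu_i\int_{\mathbb R^N}Q_\varepsilon(x-y_i)|u_i^\sigma|^{2p}\,dx+\sum_{j\neq i}\lambda_{ij}\int_{\mathbb R^N}|u_j|^p|u_i^\sigma|^p\,dx .
\]
Every term in the sum is nonpositive because $\lambda_{ij}<0$ by $(A_2)$. We also have $0\le Q_\varepsilon(\cdot-y_i)\le\|Q\|_\infty$ by $(A_3)$, and this bound does not depend on $\varepsilon$. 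Let $S_{p}$ denote the constant of the embedding $H^1(\mathbb R^N)\hookrightarrow L^{2p}(\mathbb R^N)$, which is available by $(A_1)$. Then
\[
\|u_i^\sigma\|_{H^1}^2\le \mu_i\|Q\|_\infty\,|u_i^\sigma|_{2p}^{2p}\le \mu_i\|Q\|_\infty S_p^{2p}\,\|u_i^\sigma\|_{H^1}^{2p}.
\]

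Since $u_i^\sigma\neq0$, we may divide by $\|u_i^\sigma\|_{H^1}^2>0$. Using $p>1$, this gives
\[
\|u_i^\sigma\|_{H^1}\ge \bigl(\mu_i\|Q\|_\infty S_p^{2p}\bigr)^{-1/(2p-2)}.
\]
We then set $\delta:=\min_{1\le i\le m}\bigl(\mu_i\|Q\|_\infty S_p^{2p}\bigr)^{-1/(2p-2)}$, which is independent of $\varepsilon$. One must check that $\|Q\|_\infty>0$. This holds because $Q\ge0$ attains its maximum and $Q\not\equiv0$; if $Q\equiv0$, the displayed inequality would force $u_i^\sigma=0$, contradicting the definition.

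There is no real obstacle in this argument. The only points needing care are the sign of the coupling term and the uniformity of the bound on $Q_\varepsilon$ in $\varepsilon$, and both follow directly from $(A_2)$ and $(A_3)$.
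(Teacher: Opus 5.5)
Your proof is correct and follows the paper's argument. You drop the nonpositive coupling term in \eqref{eq2.3}, bound $Q_\varepsilon$ by $\|Q\|_\infty$, and use the Sobolev embedding to get $\|u_i^\sigma\|_{H^1}^2\le C\|u_i^\sigma\|_{H^1}^{2p}$. The paper packages this as a contradiction along a sequence with $\|u_{i_n}^{(n)\,\sigma_n}\|_{H^1}\to0$; you divide directly and get the explicit $\delta=\min_i(\mu_i\|Q\|_\infty S_p^{2p})^{-1/(2p-2)}$, which is slightly cleaner.
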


\begin{proof}
	For $\mathbf{u}\in\mathcal{N}^{\mathrm{sc}}_\varepsilon$, one has
	$u_i^\pm\not\equiv0$ for each $i$.  
	Suppose by contradiction that there exists 
	$\{\mathbf{u}^{(n)}\}\subset\mathcal{N}^{\mathrm{sc}}_\varepsilon$ and indices $i_n$ and signs
	$\sigma_n\in\{+,-\}$ such that $\|u^{(n)\,\sigma_n}_{i_n}\|_{H^1}\to0$.  
	It follows from \eqref{eq2.3} and the Sobolev inequality that
	\[
    \begin{aligned}
	\|u^{(n)\,\sigma_n}_{i_n}\|_{H^1}^2
	&=\mu_{i_n}\int_{\mathbb{R}^N}Q_\varepsilon|u^{(n)\,\sigma_n}_{i_n}|^{2p}\,dx
	+ \sum_{j\neq i_n}\lambda_{i_n j}
	\int_{\mathbb{R}^N}|u^{(n)}_j|^p|u^{(n)\,\sigma_n}_{i_n}|^p\,dx\\
    &\leq \mu_{i_n}\int_{\mathbb{R}^N}Q_\varepsilon|u^{(n)\,\sigma_n}_{i_n}|^{2p}\,dx\\
    & \le C\|u^{(n)\,\sigma_n}_{i_n}\|_{H^1}^{2p}
    \end{aligned}
	\]
since $\lambda_{ij}<0$, which 
	yields a uniform lower bound $\delta>0$ for all $\|u_i^\pm\|_{H^1}$.
\end{proof}
\par
To obtain sign-changing solutions with all components are sign-changing, we introduce the following subset of $H$
	\[
	\mathcal{A}
	:=\bigl\{\mathbf{u}\in H:\ u_i^\pm\neq 0,~\forall~i\in\{1,\dots,m\}\bigr\}.
	\]
\par
We claim that $\mathcal A$ is open in $H$.
Indeed, for $w\in H^1(\mathbb R^N)$, the truncation mappings $w\mapsto w^\pm$ are Lipschitz on $H^1(\mathbb R^N)$, namely
	\[
	\|w^\pm-z^\pm\|_{H^1}\le \|w-z\|_{H^1},~\forall\, w,z\in H^1(\mathbb R^N).
	\]
	Hence, for each fixed $i$ and $\sigma\in\{+,-\}$, the mapping
	$\mathbf u\mapsto u_i^\sigma$ is continuous from $H$ to $H^1(\mathbb R^N)$.
	Since $H^1(\mathbb R^N)\setminus\{0\}$ is open, it follows that
	\[
	\mathcal A=\bigcap_{i=1}^m\bigcap_{\sigma\in\{+,-\}}
	\{\mathbf u\in H|u_i^\sigma\in H^1(\mathbb R^N)\setminus\{0\}\}
	\]
	is open in $H$, since the right hand set is the intersection of finite open sets. Or equivalently, one can check $\mathcal{A}$ is open in a direct way.
Indeed, for $\mathbf{u}\in \mathbf{A}$, define 
$$r = \min\limits_{1\le i\le m,\sigma\in\{+,-\}}\|u_i^\sigma\|_{H^1}>0,$$
then for $\mathbf{v}\in B_{r/2}(\mathbf{u})$, there holds
\[
\|v_i^\sigma\|_{H^1} 
\geq \|u_i^\sigma\|_{H^1} - \|v_i^\sigma-u_i^\sigma\|_{H^1}
\geq \|u_i^\sigma\|_{H^1} - \|v_i - u_{i}\|_{H^1} 
> \delta - \frac{\delta}{2} = \frac{\delta}{2} > 0,
\]
which implies $v_i^\sigma\neq 0$ and hence $\mathbf{v}\in \mathcal{A}$. Therefore $\mathcal{A}$ is open. Consequently, $\mathcal{A}$ is a smooth Hilbert manifold modeled on $H$ and the tangent space $T_u\mathcal{A}=H$. Moreover, 
\begin{equation}\label{eq2.8}
  \partial\mathcal{A}=\{\mathbf{u}\in H:~\text{there exists}~i\in\{1,\cdots,m\}~\text{and}~\sigma\in\{+,-\}~\text{such that}~u_i^\sigma=0\}.
\end{equation}
Here we point out that $\mathcal{A}$ is not convex.
\par
We now introduce the nodal projection and the reduced functional. The uniqueness in
	Lemma~\ref{lem2.2} allows us to define $(t_i^+(\mathbf{u}),t_i^-(\mathbf{u}))$ for
	$\mathbf{u}\in\mathcal{A}$. 
\begin{Def}\label{def2.2}
	For $\mathbf{u}\in\mathcal{A}$, define $m_\varepsilon(\mathbf{u})\in \mathcal{N}^{\mathrm{sc}}_\varepsilon$ by
	\[
	\bigl(m_\varepsilon(\mathbf{u})\bigr)_i
	:= t_i^+(\mathbf{u})\,u_i^+ + t_i^-(\mathbf{u})\,u_i^-,~ i=1,\dots,m,
	\]
	and the reduced functional $\Psi_\varepsilon:\mathcal{A}\to\mathbb{R}$ by
	\begin{equation}\label{eq2.9}
		\Psi_\varepsilon(\mathbf{u})
		= J_\varepsilon\bigl(m_\varepsilon(\mathbf{u})\bigr).
	\end{equation}
\end{Def}
\begin{Rem}\label{rem2.2}
There are only few papers concerned with the open set constrain.
  In \cite{Buffoni-Esteban-Sere-2006-ANS}, to seek for normalized solutions for a strongly indefinite semilinear elliptic equation, Buffoni, Esteban and S\'{e}r\'{e} introduced a reduced functional on an open and convex set of $H^1(\mathbb{R}^N)$, such a topic related to the bifurcation in gaps of the essential spectrum of the corresponding differential operator.
\end{Rem}
\par
\begin{Lem}\label{lem2.4}
  The mapping $m_\varepsilon:\mathcal{A}\to \mathcal{N}^{\mathrm{sc}}_\varepsilon$ is an odd $C^1$-diffeomorphism.
\end{Lem}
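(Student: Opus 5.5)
The plan splits into four steps: oddness, $C^1$ regularity via the implicit function theorem, surjectivity with a $C^1$ inverse on $\mathcal N^{\mathrm{sc}}_\varepsilon$, and the injectivity issue, which I expect to be the main obstacle.

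\emph{Oddness.} By Lemma~\ref{lem2.2}, for $\mathbf u\in\mathcal A$ the numbers $t_i^\pm(\mathbf u)$ are the unique positive zeros of the functions $f_i^\pm$ in \eqref{eq2.6}, and these are built from $a_i^\pm,b_i^\pm,c_{ij}^\pm$. Replacing $\mathbf u$ by $-\mathbf u$ gives $(-u_i)^\pm=-u_i^\mp$. Hence $a_i^\pm(-\mathbf u)=a_i^\mp(\mathbf u)$, and likewise for $b$ and $c$. So $f_i^\pm$ computed at $-\mathbf u$ equals $f_i^\mp$ computed at $\mathbf u$, which gives $t_i^\pm(-\mathbf u)=t_i^\mp(\mathbf u)$. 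Therefore
\[
\bigl(m_\varepsilon(-\mathbf u)\bigr)_i=t_i^\mp(\mathbf u)(-u_i^\mp)+t_i^\pm(\mathbf u)(-u_i^\pm)=-\bigl(m_\varepsilon(\mathbf u)\bigr)_i .
\]
Since $\mathcal A=-\mathcal A$, this shows that $m_\varepsilon$ is odd.

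\emph{$C^1$ regularity.} I would apply the implicit function theorem to the map
\[
F(\mathbf u,t)=\bigl(f_i^\sigma(\mathbf u;t_i^\sigma)\bigr)_{i,\sigma},
\]
defined on $\mathcal A\times(0,\infty)^{2m}$. The equations decouple in $t$, so $\partial_tF$ is diagonal with entries $(f_i^\sigma)'(t_i^\sigma)$. These entries are strictly negative: $f_i^\sigma$ is strictly concave and positive near $0$, so at its unique positive zero it must be decreasing, and if the derivative vanished there, concavity would make that point a maximum with value $0$, a contradiction. Consequently the solution map $\mathbf u\mapsto(t_i^\pm(\mathbf u))$ is $C^1$ as soon as the coefficients $a_i^\sigma,b_i^\sigma,c_{ij}^\sigma$ are $C^1$ in $\mathbf u$.

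For $b_i^\sigma$ and $c_{ij}^\sigma$ this holds: $w\mapsto\int Q_\varepsilon|w^\sigma|^{2p}$ and $\int|u_j|^p|w^\sigma|^p$ are $C^1$ on $H^1$ because $p>1$, with derivatives $2p\int Q_\varepsilon|w^\sigma|^{2p-2}w^\sigma\varphi$, and so on. The delicate coefficient is $a_i^\sigma=\|u_i^\sigma\|_{H^1}^2$, since the truncation $w\mapsto w^\sigma$ is only Lipschitz in $H^1$. I would first try to show that $w\mapsto\|w^\sigma\|_{H^1}^2=\int_{\{\sigma w>0\}}(|\nabla w|^2+w^2)$ is $C^1$, with derivative $2\langle w^\sigma,\varphi\rangle_{H^1}$. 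The idea is to write it as $\langle w,w^\sigma\rangle_{H^1}$ and use that $\nabla w=0$ a.e. on $\{w=0\}$.

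Composing the $C^1$ map $\mathbf u\mapsto(t_i^\pm(\mathbf u))$ with $(t,\mathbf u)\mapsto\sum_\sigma t_i^\sigma u_i^\sigma$ then requires the same regularity of $\mathbf u\mapsto u_i^\sigma$ as a map into $H$. I expect this to be the main technical obstacle. If it fails in full generality, I would check the differentiability only along the directions actually used later, namely the tangent directions in the genus scheme.

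\emph{Surjectivity and inverse on the Nehari set.} If $\mathbf u\in\mathcal N^{\mathrm{sc}}_\varepsilon$, then $\mathbf u\in\mathcal A$ by Lemma~\ref{lem2.3}, and $t=1$ solves $f_i^\sigma=0$ by \eqref{eq2.3}. By uniqueness, $m_\varepsilon(\mathbf u)=\mathbf u$. So $m_\varepsilon$ is onto, and the inclusion $\iota:\mathcal N^{\mathrm{sc}}_\varepsilon\hookrightarrow\mathcal A$ is a $C^1$ inverse with $m_\varepsilon\circ\iota=\mathrm{id}$.

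To make ``$C^1$'' meaningful on $\mathcal N^{\mathrm{sc}}_\varepsilon$, I would show it is a $C^1$ submanifold of codimension $2m$. Its defining maps are $G_i^\sigma(\mathbf u)=\langle J_\varepsilon'(\mathbf u),u_i^\sigma\rangle$, and the differentials are independent because
\[
\langle (G_i^\sigma)'(\mathbf u),u_k^\tau\rangle=\delta_{ik}\delta_{\sigma\tau}(g_i^\sigma)''(1)\ne0 .
\]

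\emph{Injectivity.} This is where the statement as worded needs the most care. If $\mathbf v$ is obtained from $\mathbf u$ by positive rescalings $v_i^\pm=s_i^\pm u_i^\pm$, then $m_\varepsilon(\mathbf v)=m_\varepsilon(\mathbf u)$. Conversely, $m_\varepsilon(\mathbf u)=m_\varepsilon(\mathbf v)$ forces $\mathbf v$ to be such a rescaling, by comparing supports and signs. So I would prove the diffeomorphism property on the quotient of $\mathcal A$ by this free action of $(0,\infty)^{2m}$. The fibres are exactly the orbits of this action, and the composite map obtained by passing to the quotient, $\mathcal A/(0,\infty)^{2m}\to\mathcal N^{\mathrm{sc}}_\varepsilon$, is an odd $C^1$ bijection with $C^1$ inverse $\mathbf u\mapsto[\mathbf u]$. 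This is the sense in which $\Psi_\varepsilon=J_\varepsilon\circ m_\varepsilon$ will be used for the genus argument.
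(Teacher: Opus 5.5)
Your injectivity objection is correct, and it is exactly where the paper's proof breaks. The paper infers that $m_\varepsilon$ is one to one from Lemma~\ref{lem2.2}, but that lemma only gives uniqueness of the scaling pair for a \emph{fixed} $\mathbf u$. The quickest proof of the failure is this. Any map $\mathcal A\to\mathcal N^{\mathrm{sc}}_\varepsilon$ that fixes $\mathcal N^{\mathrm{sc}}_\varepsilon$ pointwise (as the surjectivity step asserts and you also use) satisfies $m_\varepsilon\circ m_\varepsilon=m_\varepsilon$. Injectivity would then force $\mathcal A=\mathcal N^{\mathrm{sc}}_\varepsilon$. That is false: for $\mathbf w\in\mathcal N^{\mathrm{sc}}_\varepsilon$, the point $2\mathbf w\in\mathcal A$ violates \eqref{eq2.3} when $p>1$.

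The failure is not cosmetic. Take the natural projection, i.e. the unique point of the form $\sum_{i,\sigma}t_i^\sigma u_i^\sigma$ in $\mathcal N^{\mathrm{sc}}_\varepsilon$, when it exists. Then $\Psi_\varepsilon$ is constant along your rescaling orbits, and these orbits come arbitrarily close to $\partial\mathcal A$, contradicting Remark~\ref{rem2.3}. So the lemma as worded cannot be proved, and you are really proposing a replacement statement. Your oddness argument is fine wherever the scalings exist. The replacement, however, still has genuine gaps, mostly shared with the paper.

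First, the regularity step fails, and the claim you intend to try first is false: $w\mapsto\|w^\sigma\|_{H^1}^2$ does not have derivative $2\langle w^\sigma,\varphi\rangle_{H^1}$. Take $N=1$, near a simple zero where $w(x)=x$ and $\varphi\equiv1$ on $(-1,1)$. The contribution of $(-1,1)$ to $\int|((w+s\varphi)^+)'|^2$ is $\int_{-s}^1dx=1+s$. So the derivative contains the term $|w'(0)|\varphi(0)=1$, which is absent from $2\int(w^+)'\varphi'$. For smooth $w$ with regular nodal set one picks up an extra $\pm\int_{\{w=0\}}\varphi|\nabla w|\,d\mathcal H^{N-1}$. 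Knowing $\nabla w=0$ a.e.\ on $\{w=0\}$ does not help, because the order-$s$ contribution comes from the layer $\{0<|w|\le s|\varphi|\}$, not from the nodal set itself.

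Worse, $w\mapsto w^\pm$ is not even G\^ateaux differentiable from $H^1$ to $H^1$. In the same example the difference quotient has slope $1/s$ on an interval of length $s$, so its $H^1$ norm blows up. The paper hides exactly this point: it calls $F_i^\sigma$ obviously $C^1$ and treats Lipschitz continuity of $u\mapsto u^\pm$ as if it gave differentiability. Consequently:
\begin{itemize}
\item the implicit function theorem cannot be run in $H$;
\item $m_\varepsilon$ is in general not differentiable, since its $i$-th component is $t_i^-u_i+(t_i^+-t_i^-)u_i^+$ and $t_i^+\neq t_i^-$ in general;
\item the constraints $G_i^\sigma$ are not $C^1$, so there is no basis for treating $\mathcal N^{\mathrm{sc}}_\varepsilon$ as a $C^1$ submanifold. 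This is the well-known obstruction for nodal Nehari sets.
\end{itemize}
Checking derivatives only ``along the directions used later'' does not rescue the argument. The pseudo-gradient deformation of Section~\ref{sec:existence} needs $\Psi_\varepsilon\in C^1(\mathcal A)$ in all directions. Your quotient $\mathcal A/(0,\infty)^{2m}$ also carries no $C^1$ structure, since the action itself is built from $u\mapsto u^\pm$.

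Second, the equations do not decouple in $t$. When all components are rescaled at once, the $(i,\sigma)$ equation in \eqref{eq2.3} involves $(t_j^+)^p\int|u_j^+|^p|u_i^\sigma|^p+(t_j^-)^p\int|u_j^-|^p|u_i^\sigma|^p$. So the $2m$ equations are genuinely coupled. The numbers $t_i^\sigma$ of Lemma~\ref{lem2.2}, computed with the other components frozen, do not place $m_\varepsilon(\mathbf u)$ in $\mathcal N^{\mathrm{sc}}_\varepsilon$.

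Moreover, the factor $\frac12$ in \eqref{eq2.6} is inconsistent with \eqref{eq2.3}. So $t=1$ is not a zero for $\mathbf u\in\mathcal N^{\mathrm{sc}}_\varepsilon$ unless all $c_{ij}^\sigma=0$, which breaks your surjectivity step as written. For the same coupling reason your transversality formula is wrong off the diagonal: formally $\langle (G_i^\sigma)'(\mathbf u),u_k^\tau\rangle=-p\lambda_{ik}\int|u_k^\tau|^p|u_i^\sigma|^p$ for $k\neq i$, which is nonzero whenever the supports overlap.

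Finally, the fiber problem need not be solvable at all. $Q$ has compact support, and if $u_i^\sigma$ vanishes on $\operatorname{supp}Q_\varepsilon(\cdot-y_i)$, then \eqref{eq2.3} would force $\|t_i^\sigma u_i^\sigma\|_{H^1}^2\le0$. So $m_\varepsilon$ is not defined on all of $\mathcal A$.

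A genus argument for this system needs a different device rather than a $C^1$ reduction through $m_\varepsilon$. One option is invariant sets of a descending flow for $J_\varepsilon$ on $H$ that preserve the sign structure.
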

\begin{proof}
Replacing $\mathbf{u}$ by $-\mathbf{u}$ swaps $u_i^+$ and $u_i^-$ but leaves
	\eqref{eq2.6} invariant, so $t_i^\pm(-\mathbf{u})=t_i^\mp(\mathbf{u})$, and hence $m_\varepsilon$ is odd.
\par
By the definition of $m_\varepsilon$, \( m_\varepsilon(\mathbf{u}) \) is a linear combination of \( t_i^+(\mathbf{u}) u_i^+ \) and \( t_i^-(\mathbf{u}) u_i^- \) for each \( i \). Since \( u_i^\sigma \) is Lipschitz continuous on \( H^1(\mathbb{R}^N) \), and hence we only need to show \( t_i^\sigma(\mathbf{u}) \in C^1(\mathcal{A}, (0, \infty)) \).
\par
For fixed \( i \) and \( \sigma \in \{+, -\} \), define
\[
F_i^\sigma(\mathbf{u}, t) = a_i^\sigma(\mathbf{u}) t^{2-p} - \mu_i b_i^\sigma(\mathbf{u}) t^p - \frac{1}{2} \sum_{j \neq i} \lambda_{ij} c_{ij}^\sigma(\mathbf{u}),~(\mathbf{u}, t) \in \mathcal{A} \times (0, \infty).
\]
By Lemma \ref{lem2.2}, \( F_i^\sigma(\mathbf{u}, t_i^\sigma(u)) = 0 \) for all \( \mathbf{u} \in \mathcal{A} \). Obviously, \( F_i^\sigma \in C^1(\mathcal{A} \times (0, \infty), \mathbb{R}) \) and 
\[
\frac{\partial F_i^\sigma}{\partial t}(\mathbf{u}, t)|_{(\mathbf{u},t_i^\sigma(\mathbf{u}))} = (2 - p) a_i^\sigma(\mathbf{u}) (t_i^\sigma(\mathbf{u}))^{1-p} - \mu_i p b_i^\sigma(\mathbf{u}) (t_i^\sigma(\mathbf{u}))^{p-1}<0
\]
since \( a_i^\sigma(\mathbf{u}) > 0 \), \( b_i^\sigma(\mathbf{u}) \geq 0 \) and \( p > 1 \).
By the Implicit Functiona Theorem, \( t_i^\sigma(\mathbf{u}) \) is \( C^1 \) in \( \mathbf{u} \in \mathcal{A} \). Since this holds for all \( i \) and \( \sigma \), \( m_\varepsilon(\mathbf{u}) \) is \( C^1 \) on \( \mathcal{A} \).
\par
Next, we show that \( m_\varepsilon: \mathcal{A} \to \mathcal{N}_\varepsilon^{sc} \) is bijective. Using Lemma \ref{lem2.2} again, $m_\varepsilon$ is one to one. 
For any \(\mathbf{v} \in \mathcal{N}_\varepsilon^{sc} \), \( v_i^+ \neq 0 \) and \( v_i^- \neq 0 \), so \( \mathbf{v} \in \mathcal{A} \) and $\langle J_\varepsilon^\prime(\mathbf{v}),v_i^+\rangle= 0$ and 
$\langle J_\varepsilon^\prime(\mathbf{v}),v_i^-\rangle = 0$, and the scaling factors for \( \mathbf{v} \) are \( t_i^+(\mathbf{v}) = 1 \) and \( t_i^-(\mathbf{v}) = 1 \). Thus, \( m_\varepsilon(\mathbf{v}) = 1 \cdot v_i^+ + 1 \cdot v_i^- = \mathbf{v} \). Therefore, $m_\varepsilon$ is surjective.
\par
The above fats implies that the inverse map \( m_\varepsilon^{-1}: \mathcal{N}_\varepsilon^{sc} \to \mathcal{A} \) is well defined. 
At last we show that $m_\varepsilon^{-1}$ is  \( C^1 \).
\par
Indeed, recall that \( \mathcal{A} \) is an open subset of \( H \), hence a Hilbert manifold modeled on \( H \) and the tangent space $T_u\mathcal{A}=H$. By the Regular Value Theorem, the constraints \( G_i^\sigma(w) = \langle J_\varepsilon'(w), w_i^\sigma \rangle = 0 \) (for \( i = 1, \dots, m \), \( \sigma \in \{+, -\} \)) are regular, so \( \mathcal{N}_\varepsilon^{sc} \) is a \( C^1 \)-submanifold of \( H \). Let \( T_{m_\varepsilon(u)}\mathcal{N}_\varepsilon^{sc} \) be the tangent space to \( \mathcal{N}_\varepsilon^{sc} \) at \( m_\varepsilon(u) \), which has codimension \(  2m \) since there are $2$ constraints per component.
The derivative \( m_\varepsilon^\prime(u) \) encodes the variation of scaling factors \( t_i^\sigma(u) \) and the original components \( u_i^\sigma \). By the non-degeneracy of \( \frac{\partial F_i^\sigma}{\partial t} \) and bijectivity of \( m_\varepsilon \), the Fr\'{e}chet derivative \( m_\varepsilon^\prime(u): T_u\mathcal{A} \to T_{m_\varepsilon(u)}\mathcal{N}_\varepsilon^{sc} \) is both injective and surjective, hence a linear isomorphism.  
\par
By the Inverse Function Theorem, \( m_\varepsilon \) is locally \( C^1 \)-invertible at every \( u \in \mathcal{A} \). Since \( m_\varepsilon \) is globally bijective, the inverse map \( m_\varepsilon^{-1} \) is globally \( C^1 \)-smooth on \( \mathcal{N}_\varepsilon^{sc} \).
\end{proof}

\begin{Lem}
	\label{lem2.5}
	The following statements hold.
	\begin{itemize}
		\item[(a)] The reduced functional
		$\Psi_\varepsilon\in C^1(\mathcal{A},\mathbb{R})$ is even.
		\item[(b)] A point $\mathbf{u}\in\mathcal{A}$ is a critical point of
		$\Psi_\varepsilon$ if and only if
		$\mathbf{v}:=m_\varepsilon(\mathbf{u})\in \mathcal{N}^{\mathrm{sc}}_\varepsilon$ is a
		critical point of $J_\varepsilon$ in $H$, i.e.\ $J_\varepsilon^\prime(\mathbf{v})=0$.
        \item[(c)]  
        If $(\mathbf{u}_n)\subset \mathcal{A}$ is a $(PS)_c$ sequence for
		$\Psi_\varepsilon$, namely
		\[
		\Psi_\varepsilon(\mathbf{u}_n)\to c
		~\text{and}~
		\Psi_\varepsilon^\prime(\mathbf{u}_n)\to 0 \ \text{in } H^\prime,
		\]
		then the sequence $\{\mathbf{v}_n\}$ is a $(PS)_c$ sequence for $J_\varepsilon$, where $\mathbf{v}_n := m_\varepsilon(\mathbf{u}_n)\in \mathcal{N}^{\mathrm{sc}}_\varepsilon$, 
		that is,
		\[
		J_\varepsilon(\mathbf{v}_n)\to c
		~\text{and}~
		J_\varepsilon'(\mathbf{v}_n)\to 0 \ \text{in } H'.
		\]
		\item[(d)] There exists a constant
		$\alpha>0$, independent of $\varepsilon$, such that for every
		$\mathbf{u}\in\mathcal{A}$ we have
		\[
		\Psi_{\varepsilon}(\mathbf{u})
		= J_{\varepsilon}\big(m_{\varepsilon}(\mathbf{u})\big)
		= \alpha\,\big\|m_{\varepsilon}(\mathbf{u})\big\|^{2}.
		\]
	\end{itemize}
\end{Lem}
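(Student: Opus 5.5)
The plan is to prove (a)--(c) from Lemma~\ref{lem2.4} and Remark~\ref{rem2.1}, and to prove (d) by repeating the computation of Lemma~\ref{lem2.1} at the point $\mathbf v=m_\varepsilon(\mathbf u)$. That computation also shows that the identity in (d), read literally with a single constant $\alpha$, forces the coupling terms to vanish; I make this explicit below.

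For (a), evenness follows directly: $m_\varepsilon$ is odd by Lemma~\ref{lem2.4} and $J_\varepsilon$ is even, so $\Psi_\varepsilon(-\mathbf u)=J_\varepsilon(-m_\varepsilon(\mathbf u))=\Psi_\varepsilon(\mathbf u)$. Moreover, $\Psi_\varepsilon\in C^1(\mathcal A,\mathbb R)$ as a composition of $C^1$ maps.

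For (b), I will use the chain rule
\[
\Psi_\varepsilon'(\mathbf u)[\boldsymbol\varphi]=J_\varepsilon'(\mathbf v)[m_\varepsilon'(\mathbf u)\boldsymbol\varphi].
\]
Since $m_\varepsilon'(\mathbf u)$ is an isomorphism onto $T_{\mathbf v}\mathcal N^{\mathrm{sc}}_\varepsilon$, $\mathbf u$ is critical for $\Psi_\varepsilon$ if and only if $J_\varepsilon'(\mathbf v)$ annihilates $T_{\mathbf v}\mathcal N^{\mathrm{sc}}_\varepsilon$. The remaining $2m$ directions $v_i^\pm$ are handled by the constraint itself: $\langle J_\varepsilon'(\mathbf v),v_i^\pm\rangle=0$ holds because $\mathbf v\in\mathcal N^{\mathrm{sc}}_\varepsilon$. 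I then need $H=T_{\mathbf v}\mathcal N^{\mathrm{sc}}_\varepsilon\oplus\operatorname{span}\{v_i^\pm\}$. This follows from the transversality used in Lemma~\ref{lem2.4}, namely the nondegeneracy $\partial_tF_i^\sigma<0$, which makes the matrix $\bigl(\langle G_i^{\sigma\prime}(\mathbf v),v_j^\tau\rangle\bigr)$ invertible.

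Part (c) is the quantitative version of (b), and I expect it to be the main obstacle. I would first bound $\|\mathbf v_n\|$ using Lemma~\ref{lem2.1} together with $J_\varepsilon(\mathbf v_n)\to c$. Next I would bound the components $\|(u_n)_i^\pm\|$ away from $0$ and $\infty$ along the sequence (recall $\|v_i^\pm\|\ge\delta$ by Lemma~\ref{lem2.3}). This gives uniform control of $\|m_\varepsilon'(\mathbf u_n)^{-1}\|$ and of the angle between the tangent space and $\operatorname{span}\{v_i^\pm\}$. With that control, $\|J_\varepsilon'(\mathbf v_n)\|_{H'}\le C\|\Psi_\varepsilon'(\mathbf u_n)\|_{H'}\to0$. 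The step most likely to need care is the bound on $t_i^\sigma(\mathbf u_n)$; I would first try to obtain it from \eqref{eq2.6}.

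For (d), set $\mathbf v=m_\varepsilon(\mathbf u)\in\mathcal N^{\mathrm{sc}}_\varepsilon$ and sum \eqref{eq2.3} over $i$ and $\sigma$, exactly as in Lemma~\ref{lem2.1}. This gives
\[
\Psi_\varepsilon(\mathbf u)=\Bigl(\tfrac12-\tfrac1{2p}\Bigr)\|\mathbf v\|^2-\tfrac1{2p}\sum_{i\ne j}\lambda_{ij}\int_{\mathbb R^N}|v_i|^p|v_j|^p\,dx .
\]
Two consequences follow, and I would record both. First, the identity holds with $\alpha=\frac12-\frac1{2p}$, which is independent of $\varepsilon$, exactly when the coupling integral vanishes, for instance for segregated supports. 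Second, in general the same $\alpha$ is always a lower bound, since $\lambda_{ij}<0$ makes the coupling term nonnegative. To quantify the coupling term, sum \eqref{eq2.3} over $i$ and $\sigma$; together with $Q\le\|Q\|_\infty$ and the Sobolev inequality this gives
\[
0\le-\sum_{i\ne j}\lambda_{ij}\int_{\mathbb R^N}|v_i|^p|v_j|^p\,dx\le\sum_{i}\mu_i\int_{\mathbb R^N}Q_\varepsilon|v_i|^{2p}\,dx\le C\|\mathbf v\|^{2p},
\]
with $C$ independent of $\varepsilon$. Hence the identity in (d) holds exactly if $\alpha$ is allowed to depend on $\mathbf u$, through the ratio $\Psi_\varepsilon(\mathbf u)/\|m_\varepsilon(\mathbf u)\|^2\ge\frac12-\frac1{2p}$, and this ratio is bounded on bounded sets. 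This two-sided comparability is the property actually needed later for coercivity and boundedness of Palais--Smale sequences.
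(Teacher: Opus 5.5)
\emph{Parts (a) and (b).} These follow the paper. Part (a) is the same two-line argument. Your decomposition $H=T_{\mathbf v}\mathcal N^{\mathrm{sc}}_\varepsilon\oplus\operatorname{span}\{v_i^\pm\}$ is equivalent to the paper's Lagrange-multiplier argument: both reduce to invertibility of the $2m\times 2m$ matrix $\bigl(\langle (G_i^\sigma)'(\mathbf v),v_j^\tau\rangle\bigr)$.

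\emph{Part (d): a genuine error.} Your formula carries an extra term $-\frac1{2p}\sum_{i\neq j}\lambda_{ij}\int|v_i|^p|v_j|^p$. This is wrong; it copies a slip in the displayed computation of Lemma~\ref{lem2.1}. Summing \eqref{eq2.3} over $i$ and $\sigma$ gives $\sum_i\mu_i\int Q_\varepsilon(x-y_i)|v_i|^{2p}\,dx=\|\mathbf v\|^2-S$, where $S:=\sum_{i\neq j}\lambda_{ij}\int|v_i|^p|v_j|^p\,dx$. Substituting this into \eqref{eq2.1} gives
\[
J_\varepsilon(\mathbf v)=\frac12\|\mathbf v\|^2-\frac1{2p}\bigl(\|\mathbf v\|^2-S\bigr)-\frac1{2p}S=\Bigl(\frac12-\frac1{2p}\Bigr)\|\mathbf v\|^2 .
\]
The $+\frac1{2p}S$ produced by the substitution cancels the coupling term of $J_\varepsilon$. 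Conceptually, the coupling is $2p$-homogeneous like the $Q$-term, and the sum of the $2m$ constraints is just $\langle J_\varepsilon'(\mathbf v),\mathbf v\rangle=0$. This is exactly the paper's proof of (d). It gives the identity as stated, with $\alpha=\frac12-\frac1{2p}$, for every $\mathbf u\in\mathcal A$ and with no segregation assumption. So your claims that (d) needs vanishing coupling, or a $\mathbf u$-dependent $\alpha$, are false and should be dropped. The ``two-sided comparability'' you end with is true but strictly weaker than what is stated.

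\emph{Two smaller points.}

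In (b), $\partial_tF_i^\sigma<0$ only controls the diagonal of that matrix. For $j\neq i$ the coupling produces nonzero entries $-p\lambda_{ij}\int|v_j^\tau|^p|v_i^\sigma|^p\,dx$. Invertibility still holds, by strict diagonal dominance (the matrix is symmetric). Using the constraint, the $(i,\sigma)$ diagonal entry equals
\[
-(2p-2)\|v_i^\sigma\|_{H^1}^2+p\sum_{j\neq i}\lambda_{ij}\int|v_j|^p|v_i^\sigma|^p\,dx .
\]
Its modulus exceeds the sum of the moduli of the other entries in its row by $(2p-2)\|v_i^\sigma\|_{H^1}^2>0$.

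In (c), you are right that a quantitative version of (b) is needed; the paper merely asserts that (c) follows from (b). But your key step, bounding $\|(u_n)_i^\pm\|_{H^1}$ away from $0$ and $\infty$, has no source. The hypotheses together with (d) bound $\mathbf v_n=m_\varepsilon(\mathbf u_n)$, but they give no control on $\mathbf u_n$ itself, because $\mathcal A$ carries no normalization. Without that, the constant in $\|J_\varepsilon'(\mathbf v_n)\|\le C\|\Psi_\varepsilon'(\mathbf u_n)\|$ is not shown to be uniform, so (c) remains a plan rather than a proof.
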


\begin{proof}
	(a) Obviously, $\Psi_\varepsilon$ is even since $m_\varepsilon$ is odd.
By Lemma \ref{lem2.4}, $m_\varepsilon$ is $C^1$, and hence $\Psi_\varepsilon(u)=J_\varepsilon(m_\varepsilon(u))$ is $C^1$. 
\par
(b) Let $\mathbf{u}\in\mathcal{A}$ be a critical point of
	$\Psi_{\varepsilon}$ and set	$\mathbf{v}:=m_{\varepsilon}(\mathbf{u})\in\mathcal{N}_{\varepsilon}^{\mathrm{sc}}$.
	By the chain rule we have
	\[
	\langle\Psi_{\varepsilon}^\prime(\mathbf{u}),\boldsymbol{\varphi}\rangle
	=\langle J_{\varepsilon}^\prime(\mathbf{v}),  {m_{\varepsilon}}^\prime(\mathbf{u})\boldsymbol{\varphi}\rangle,~\boldsymbol{\varphi}\in T_{\mathbf{u}}\mathcal{A}.
	\]
Since $\Psi_{\varepsilon}^{\prime}(\mathbf{u})=0$ and
	$m_{\varepsilon}$ is a $C^1$-diffeomorphism between
	$\mathcal{A}$ and $\mathcal{N}_{\varepsilon}^{\mathrm{sc}}$, the range of
	${m_{\varepsilon}}^\prime(\mathbf{u})$ coincides with the tangent space
	$T_{\mathbf{v}}\mathcal{N}_{\varepsilon}^{\mathrm{sc}}$. Hence
	\[
	\langle J_{\varepsilon}^\prime(\mathbf{v}),\mathbf{w}\rangle=0,~\mathbf{w}\in T_{\mathbf{v}}\mathcal{N}_{\varepsilon}^{\mathrm{sc}},
	\]
	that is, $\mathbf{v}$ is a constrained critical point of $J_{\varepsilon}$ on
	$\mathcal{N}_{\varepsilon}^{\mathrm{sc}}$.
	
	Next we show that $\mathbf{v}$ is in fact an unconstrained critical point of
	$J_{\varepsilon}$. Recall that the sign-changing Nehari manifold can be written as
	\[
	\mathcal{N}_{\varepsilon}^{\mathrm{sc}}
	= \Bigl\{
	\mathbf{w}\in H : G_i^\sigma(\mathbf{w})=0,
	\forall i\in\{1,\dots,m\},~\sigma\in\{+,-\}
	\Bigr\},
	\]
	where
	\[
	G_i^\sigma(\mathbf{w})
	:= \bigl\langle J_{\varepsilon}^\prime(\mathbf{w}), w_i^\sigma\bigr\rangle,
~i=1,\dots,m,\ \sigma\in\{+,-\}.
	\]
\par	
	By the Lagrange multiplier rule, there exist real numbers
	$\lambda_i^\sigma$ such that
	\begin{equation}\label{eq2.10}
		J_{\varepsilon}'(\mathbf{v})
		= \sum_{i=1}^m\sum_{\sigma\in\{+,-\}}
		\lambda_i^\sigma\, G_i^\sigma{}'(\mathbf{v})~\text{in}~H^\prime.
	\end{equation}
	
	For each $j=1,\dots,m$ and $\tau\in\{+,-\}$, we evaluate
	\eqref{eq2.10} in the direction $v_j^\tau$. Since
	$\mathbf{v}\in\mathcal{N}_{\varepsilon}^{\mathrm{sc}}$, we have
	$\langle J_{\varepsilon}'(\mathbf{v}), v_j^\tau\rangle = 0$, and thus
	\[
	0
	= \bigl\langle J_{\varepsilon}'(\mathbf{v}), v_j^\tau\bigr\rangle
	= \sum_{i=1}^m\sum_{\sigma\in\{+,-\}}
	\lambda_i^\sigma\, \langle (G_i^\sigma)^\prime(\mathbf{v}),v_j^\tau\rangle.
	\]
	
	This yields a homogeneous linear system for the coefficients
	$\lambda_i^\sigma$ with coefficient matrix
	\[
	A_{(j,\tau),(i,\sigma)}
	:= \langle (G_i^\sigma)^\prime(\mathbf{v}),v_j^\tau\rangle,~i,j=1,\dots,m,\ \sigma,\tau\in\{+,-\}.
	\]
	On the other hand, by the definition of the fiber maps
	$g_i^\sigma$ in Lemma~\ref{lem2.2} and a straightforward computation using
	\eqref{eq1.1} and \eqref{eq2.1}, the matrix $A$ can be
	identified (up to positive factors) with the Hessian of the function
	\[
	(t_i^\sigma)_{i,\sigma}\longmapsto
	J_{\varepsilon}\Bigl(
	\sum_{i=1}^m \sum_{\sigma\in\{+,-\}} t_i^\sigma v_i^\sigma
	\Bigr)
	\]
	at the point corresponding to $\mathbf{v}$. By Lemma~\ref{lem2.2}, this Hessian is
	negative definite, since $\mathbf{v}$ is the unique maximizer of $J_{\varepsilon}$
	along its fiber. In particular, the matrix $A$ is invertible. Hence the
	only solution of the above linear system is
	$\lambda_i^\sigma=0$ for all $i$ and $\sigma$, and from \eqref{eq2.10} we
	conclude that
	\[
	J_{\varepsilon}'(\mathbf{v})=0~\text{in}~H^\prime.
	\]
	Thus $\mathbf{v}=m_{\varepsilon}(\mathbf{u})$ is an critical
	point of $J_{\varepsilon}$, and by construction, it is sign-changing.
	This proves part~(b).
	\par
(c) This is a direct consequence of $(b)$, see also \cite{ContiTerraciniVerzini2002}.
\par
(d) Let $\mathbf{v}:=m_{\varepsilon}(\mathbf{u})\in\mathcal{N}_{\varepsilon}^{\mathrm{sc}}$.
	By \eqref{eq2.3} and summing over
	$i=1,\dots,m$ and over $\sigma\in\{+,-\}$ we obtain
	\begin{equation}\label{eq2.11}
		\|\mathbf{v}\|^{2}
		= \sum_{i=1}^{m}\mu_{i}
		\int_{\mathbb{R}^{N}}Q_{\varepsilon}(x-y_{i})\,|v_{i}|^{2p}\,dx
		+ \sum_{i\neq j}\lambda_{ij}
		\int_{\mathbb{R}^{N}}|v_{i}|^{p}|v_{j}|^{p}\,dx .
	\end{equation}
Substituting \eqref{eq2.11} into \eqref{eq2.1} yields
	\begin{align*}
		J_{\varepsilon}(\mathbf{v})
		&= \frac{1}{2}\|\mathbf{v}\|^{2}
		- \frac{1}{2p}\sum_{i=1}^{m}\mu_{i}
		\int_{\mathbb{R}^{N}}Q_{\varepsilon}(x-y_{i})\,|v_{i}|^{2p}\,dx
		- \frac{1}{2p}\sum_{i\neq j}\lambda_{ij}
		\int_{\mathbb{R}^{N}}|v_{i}|^{p}|v_{j}|^p\,dx \\[2mm]
		&= \frac{1}{2}\|\mathbf{v}\|^{2}
		- \frac{1}{2p}\Biggl(
		\|\mathbf{v}\|^{2}
		- \sum_{i\neq j}\lambda_{ij}
		\int_{\mathbb{R}^{N}}|v_{i}|^{p}|v_{j}|^p\,dx
		\Biggr)
		- \frac{1}{2p}\sum_{i\neq j}\lambda_{ij}
		\int_{\mathbb{R}^{N}}|v_{i}|^{p}|v_{j}|^p\,dx \\[2mm]
		&= \Bigl(\frac{1}{2}-\frac{1}{2p}\Bigr)\|\mathbf{v}\|^{2}.
	\end{align*}
	
	Therefore, taking $\alpha=\frac{1}{2}-\frac{1}{2p}>0$ we obtain
	\[
	\Psi_{\varepsilon}(\mathbf{u})
	= J_{\varepsilon}(\mathbf{v})
	= \alpha\,\|\mathbf{v}\|^{2},
	\]
	which clearly implies the desired coercivity on $\mathcal{N}_{\varepsilon}^{\mathrm{sc}}$.
\end{proof}

\begin{Rem}\label{rem2.3}
	Lemma \ref{lem2.5} $(d)$ implies that $\Psi_\varepsilon(\mathbf{u})\to +\infty$ as $dist(\mathbf{u},\mathcal{A})\to 0$. In fact, if $dist(\mathbf{u},\mathcal{A})\to 0$, by \eqref{eq2.8}, there exist some $i\in\{1,\cdots,m\}$ and $\sigma\in\{+,-\}$ such that $\|u_i^\sigma\|_{H^1}\to 0$. By definition \ref{def2.2}, $\bigl(m_\varepsilon(\mathbf{u})\bigr)_i^\sigma
	:= t_i^\sigma(\mathbf{u})\,u_i^\sigma$. By Lemma \ref{lem2.3}, 
	\begin{equation*}		t_i^\sigma(\mathbf{u})=\frac{\bigl(m_\varepsilon(\mathbf{u})\bigr)_i^\sigma}{\|u_i^\sigma\|_{H^1}}
		\geq \frac{\delta}{\|u_i^\sigma\|_{H^1}}\to +\infty,
	\end{equation*}
	which implies that $\big\|m_{\varepsilon}(\mathbf{u})\big\|\to \infty$ as $dist(\mathbf{u},\mathcal{A})\to 0$. Thus 
	\begin{equation*}
		\Psi_{\varepsilon}(\mathbf{u})
		= J_{\varepsilon}\big(m_{\varepsilon}(\mathbf{u})\big)
		= \alpha\,\big\|m_{\varepsilon}(\mathbf{u})\big\|^{2}\to +\infty
	\end{equation*}
	as $dist(\mathbf{u},\mathcal{A})\to 0$. Roughly speaking, if $\Psi_\varepsilon$ has a critical point $\mathbf{u}\in \mathcal{A}$ with finite energy, $\mathbf{u}$ is far away from the boundary $\partial \mathcal{A}$. 
\end{Rem}

\section{The $(PS)_c$ condition of $\Psi_\varepsilon$}\label{sec:ps}
\par
In order to apply the minimax scheme to construct sign-changing solutions, we must establish the compactness property for the reduced functional
$\Psi_\varepsilon$.
The main difficulties arise from the lack of compactness of the Sobolev embedding  and the
presence of sign components. To obtain the compactness, we combine concentration compactness arguments with
the fully sign-changing Nehari structure introduced in Section~\ref{sec:framework}. As we will see below, the compactness for $(PS)$ sequences relies on the interplay between the potential $Q$ and the competitive couplings $\lambda_{ij}<0$.
\par
Recall a sequence $\{\mathbf{u}_n\}\subset\mathcal{A}$ is called a $(PS)_c$ sequence for $\Psi_\varepsilon$ if
\begin{equation*}
 \Psi_\varepsilon(\mathbf{u}_n)\to c~\text{and}~
 \Psi_\varepsilon^\prime(\mathbf{u}_n)\to 0~\text{in}~H^\prime.
\end{equation*}
The functional $\Psi_\varepsilon$ satisfies the $(PS)_c$ condition on $\mathcal A$ if every $(PS)_c$- sequence has a convergent subsequence in $\mathcal{A}$ and its limit also contained in $\mathcal{A}$.

\begin{Lem}
	\label{lem3.1}
	The reduced functional $\Psi_{\varepsilon}$ satisfies the
	$(PS)_c$ condition on $\mathcal A$ for $c\in\mathbb{R}$.
\end{Lem}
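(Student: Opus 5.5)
Let $\{\mathbf u_n\}\subset\mathcal A$ be a $(PS)_c$ sequence for $\Psi_\varepsilon$ and set $\mathbf v_n:=m_\varepsilon(\mathbf u_n)\in\mathcal N^{\mathrm{sc}}_\varepsilon$. By Lemma~\ref{lem2.5}(c), $\{\mathbf v_n\}$ is a $(PS)_c$ sequence for $J_\varepsilon$. By Lemma~\ref{lem2.5}(d), $\alpha\|\mathbf v_n\|^2=\Psi_\varepsilon(\mathbf u_n)\to c$, so $\{\mathbf v_n\}$ is bounded in $H$. By Lemma~\ref{lem2.3}, $\|(v_n)_i^\pm\|_{H^1}\ge\delta$, hence $c\ge 2m\alpha\delta^2>0$. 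Passing to a subsequence, $\mathbf v_n\rightharpoonup\mathbf v$ in $H$ and $\mathbf v_n\to\mathbf v$ in $L^{q}_{loc}$ and a.e., for $2\le q<2^*$.

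The key step is to show $\mathbf v_n\to\mathbf v$ strongly in $H$. For the $Q$-term this is easy. Since $\operatorname{supp}Q_\varepsilon(\cdot-y_i)\subset B_{R_\varepsilon}$ is bounded and the embedding into $L^{2p}(B_{R_\varepsilon})$ is compact, we get
\[
\int Q_\varepsilon(x-y_i)\,|(v_n)_i^\pm|^{2p}\,dx\to\int Q_\varepsilon(x-y_i)\,|v_i^\pm|^{2p}\,dx .
\]
Here we use that $w\mapsto w^\pm$ is continuous in $L^{2p}_{loc}$. Now test $J_\varepsilon'(\mathbf v_n)$ against $(v_n)_i^\pm$, i.e.\ use \eqref{eq2.3}. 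Since $\lambda_{ij}<0$, the coupling terms are nonpositive, so
\[
\|(v_n)_i^\pm\|_{H^1}^2\le \mu_i\int Q_\varepsilon|(v_n)_i^\pm|^{2p}+o(1).
\]
In particular $\int Q_\varepsilon|v_i^\pm|^{2p}\ge\liminf\|(v_n)_i^\pm\|^2_{H^1}/\mu_i\ge\delta^2/\mu_i>0$. So $v_i^\pm\ne0$ for every $i$, and vanishing is ruled out precisely because the only positive part of the nonlinearity lives on a compact set.

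Next, by weak-to-weak continuity and Fatou, $\mathbf v$ is a critical point of $J_\varepsilon$. The coupling terms $\int|u_j|^p|u_i|^{p-2}u_i\varphi$ pass to the limit because they are bounded in $L^{(2p)'}$ and converge a.e., with $\varphi\in C_0^\infty$ and density. Hence $\langle J_\varepsilon'(\mathbf v),v_i^\pm\rangle=0$, and $\mathbf v\in\mathcal N^{\mathrm{sc}}_\varepsilon$. Sum \eqref{eq2.3} over $i,\pm$ for both $\mathbf v_n$ (with an $o(1)$ error) and $\mathbf v$. Writing $D_n:=-\sum_{i\ne j}\lambda_{ij}\int|v_{n,i}|^p|v_{n,j}|^p\ge0$, we obtain
\[
\|\mathbf v_n\|^2+D_n=\sum_i\mu_i\int Q_\varepsilon|v_{n,i}|^{2p}+o(1)\to\|\mathbf v\|^2+D,
\]
where $D$ is the corresponding quantity for $\mathbf v$. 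By weak lower semicontinuity of the norm and Fatou's lemma for $D_n$ (the integrand is nonnegative and converges a.e.), both $\liminf\|\mathbf v_n\|^2\ge\|\mathbf v\|^2$ and $\liminf D_n\ge D$ hold. Their sum converges to $\|\mathbf v\|^2+D$, so each converges, and $\|\mathbf v_n\|\to\|\mathbf v\|$. Together with weak convergence this gives $\mathbf v_n\to\mathbf v$ in $H$. This sign-driven argument, where the competitive sign of $\lambda_{ij}$ replaces a Brezis--Lieb splitting, is the step I expect to be the main obstacle. The care lies in verifying that each inequality goes the right way.

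Finally, transfer the convergence back to $\mathbf u_n$. Since $\mathbf v\in\mathcal N^{\mathrm{sc}}_\varepsilon$, the point $\mathbf u^*:=m_\varepsilon^{-1}(\mathbf v)$ lies in $\mathcal A$. By Lemma~\ref{lem2.4}, $m_\varepsilon^{-1}$ is continuous, so $\mathbf u_n=m_\varepsilon^{-1}(\mathbf v_n)\to\mathbf u^*\in\mathcal A$. This uses that $\mathcal N^{\mathrm{sc}}_\varepsilon$ is a submanifold on which $m_\varepsilon^{-1}$ is $C^1$; concretely, $t_i^\sigma(\mathbf v_n)=1$ and the maps $(v_n)_i^\sigma\mapsto (u_n)_i^\sigma$ are $1/t_i^\sigma(\mathbf u_n)$. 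If one prefers to avoid the global inverse, a direct route is available. Note that $(u_n)_i^\sigma=(v_n)_i^\sigma/t_i^\sigma(\mathbf u_n)$, and $m_\varepsilon$ is invariant under the positive rescalings $u_i^\sigma\mapsto s\,u_i^\sigma$. So $\mathbf u_n$ is determined by $\mathbf v_n$ only up to these scalings, and the statement should be understood as convergence after this normalization, equivalently convergence of $\mathbf v_n$ in $\mathcal N^{\mathrm{sc}}_\varepsilon$. In either reading the limit stays away from $\partial\mathcal A$ by Remark~\ref{rem2.3}, since $\Psi_\varepsilon(\mathbf u_n)\to c<\infty$.
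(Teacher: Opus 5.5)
Your compactness argument for $\mathbf v_n=m_\varepsilon(\mathbf u_n)$ is correct and takes a genuinely different route from the paper. The paper sets $\mathbf w_n=\mathbf v_n-\mathbf v$ and runs Lions' dichotomy. In the vanishing case, testing $J_\varepsilon'(\mathbf v_n)-J_\varepsilon'(\mathbf v)$ with $\mathbf w_n$ gives $\|\mathbf w_n\|\to0$. In the non-vanishing case there are bumps at $|z_n|\to\infty$, where $Q_\varepsilon\equiv0$; the translated weak limit then solves the purely competitive system and must vanish because $\lambda_{ij}<0$. Only after strong convergence does the paper get $v_i^\pm\neq0$, from Lemma~\ref{lem2.3}. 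You instead compare the summed Nehari identities of $\mathbf v_n$ and $\mathbf v$. The only positive nonlinear term lives on the bounded set $B_{R_\varepsilon}$ and converges by local compactness. Meanwhile $\|\mathbf v_n\|^2$ and the coupling energy $D_n\ge0$ are both lower semicontinuous along the sequence, which forces $\|\mathbf v_n\|\to\|\mathbf v\|$. This uses the same two structural facts (bounded $\operatorname{supp}Q$ and $\lambda_{ij}<0$). It avoids Lions' lemma, translations and the problem at infinity, and it gives $v_i^\pm\neq0$ before strong convergence via $\mu_i\int Q_\varepsilon|v_i^\pm|^{2p}\ge\delta^2$. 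The strong-convergence part also works for any bounded $(PS)_c$ sequence of $J_\varepsilon$. The paper's route is the standard concentration--compactness template: longer, but more familiar.

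Your last paragraph, however, has a real gap, and your own remark is what exposes it. Suppose $m_\varepsilon(\mathbf u)$ is the unique point of $\mathcal N^{\mathrm{sc}}_\varepsilon$ of the form $\sum_{i,\sigma}t_i^\sigma u_i^\sigma$ with $t_i^\sigma>0$. That is how you use it, and it is what the paper's surjectivity argument $m_\varepsilon(\mathbf v)=\mathbf v$ on $\mathcal N^{\mathrm{sc}}_\varepsilon$ relies on. Then indeed $m_\varepsilon(s\mathbf u)=m_\varepsilon(\mathbf u)$ for all $s>0$. So $m_\varepsilon$ is not injective and $m_\varepsilon^{-1}$ does not exist; you cannot cite Lemma~\ref{lem2.4} and the invariance in the same breath.

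Worse, the invariance makes the statement false as written. We get $\Psi_\varepsilon(s\mathbf u)=\Psi_\varepsilon(\mathbf u)$ and $\Psi_\varepsilon'(s\mathbf u)=s^{-1}\Psi_\varepsilon'(\mathbf u)$, so every critical point $\mathbf u^*$ at level $c$ comes with the whole ray $\{s\mathbf u^*:s>0\}$ of critical points at level $c$. Then $\{n\mathbf u^*\}$ is a $(PS)_c$ sequence with no convergent subsequence, and $\{\mathbf u^*/n\}$ is one converging to $\mathbf 0\notin\mathcal A$. A successful proof of the lemma would produce such a $\mathbf u^*$ and thereby refute itself.

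The same example refutes Remark~\ref{rem2.3}. Its proof only yields $t_i^\sigma(\mathbf u)\to\infty$, whereas $\|(m_\varepsilon(\mathbf u))_i^\sigma\|_{H^1}=t_i^\sigma(\mathbf u)\|u_i^\sigma\|_{H^1}$ need not blow up. So your closing appeal to that remark carries no weight. Also, ``convergence after normalization'' is a different statement, not a proof of this one.

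This final step is identical to the paper's last line, so the defect is inherited. What your argument (and the paper's, minus that line) actually establishes is this: $\{m_\varepsilon(\mathbf u_n)\}$ has a subsequence converging in $H$ to a critical point of $J_\varepsilon$ lying in $\mathcal N^{\mathrm{sc}}_\varepsilon$. You should state the result in that form rather than claim convergence of $\mathbf u_n$ in $\mathcal A$.
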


\begin{proof}
Let $\{\mathbf{u}_n\}\subset \mathcal{A}$ be a $(PS)_c$ sequence for $\Psi_\varepsilon$, i.e.
    \[
    \Psi_\varepsilon(\mathbf{u}_n)\to c~\text{and}~(\Psi_\varepsilon)'(\mathbf{u}_n)\to 0 \ \text{in } H^\prime.
    \]
    Set $\mathbf{v}_n:=m_\varepsilon(\mathbf{u}_n)\in \mathcal{N}^{\mathrm{sc}}_\varepsilon$.
    By Lemma~\ref{lem2.5}(c),
    \[
    J_\varepsilon(\mathbf{v}_n)=\Psi_\varepsilon(\mathbf{u}_n)\to c~\text{and}~
    J_\varepsilon'(\mathbf{v}_n)\to 0 \ \text{in } H^\prime.
    \]
In another word, $\{\mathbf{v}_n\}$ is a $(PS)_c$ sequence for $J_\varepsilon$ in $H$.
\par
Our main strategy is to prove that $\{\mathbf{v}_n\}$ has a strongly convergent subsequence in $H$ and the limit lies in $\mathcal{N}^{\mathrm{sc}}_\varepsilon\subset \mathcal{A}$. Jointly with Lemma~\ref{lem2.5}, $\Psi_\varepsilon$ satisfies the $(PS)_c$ condition.
\par
We first show that $\{\mathbf{v}_n\}$ is bounded in $H$.
In fact, using $J_\varepsilon(\mathbf{v}_n)\to c$ and $J_\varepsilon^\prime(\mathbf{v}_n)\to0$, we obtain
	\[
	\frac{p-1}{2p}\,\|\mathbf{v}_n\|^2
	= J_\varepsilon(\mathbf{v}_n) - \frac{1}{2p} \langle J_\varepsilon^\prime(\mathbf{v}_n),\mathbf{v}_n\rangle
	\le c + o(1) + o(1)\,\|\mathbf{v}_n\|,
	\]
which implies that 	
	$\{\mathbf{v}_n\}$ is bounded in $H$.
	Up to a subsequence, $\mathbf{v}_n\rightharpoonup \mathbf{v}$ in $H$ and
	$\mathbf{v}_n\to \mathbf{v}$ in $L^q_{\rm loc}(\mathbb R^N)^m$ for all $q\in[1,2^*)$.
Since $J_\varepsilon'(\mathbf{v}_n) \to 0$ and $\mathbf{v}_n \rightharpoonup \mathbf{v}$, 
using the fact that $\mathbf{v}_n \to \mathbf{v}$ in $L^{2p}_{\text{loc}}(\mathbb{R}^N)^m$ and $(A_3)$, one can check $\mathbf{v}$ is a critical point of $J_\varepsilon$ in a standard way (see \cite{Willem1996}).
\par	
Set $\mathbf{w}_n:=\mathbf{v}_n-\mathbf{v}$. Obviously, $\mathbf{w}_n$ is bounded in $H$. Next we will show that, up to a subsequence,  $\mathbf{w}_n\to \mathbf{0}$ in $H$.
	Assume by contradiction that $\|\mathbf{w}_n\|\nrightarrow 0$.
\par	
	If $\{\mathbf{w}_n\}$ is vanishing, then for every $R>0$,
	\[
	\sup_{y\in\mathbb{R}^N} \int_{B_R(y)} |\mathbf{w}_n(x)|^2\,dx \to 0 ~ \text{as } n\to\infty.
	\]
	By the vanishing Lemma (see \cite{Lions1984} or \cite{Willem1996}),
this would imply $\mathbf{w}_n\to0$ in $L^{2p}(\mathbb{R}^N)^m$.
\par
Because $J_\varepsilon'(\mathbf{v}_n) \to 0$ and $J_\varepsilon'(\mathbf{v}) = 0$, we have
\begin{equation}\label{eq3.1}
\begin{aligned}
o(1)&=\langle J_\varepsilon'(\mathbf{v}_n) - J_\varepsilon'(\mathbf{v}),\, \mathbf{w}_n\rangle\\
&= \|\mathbf{w}_n\|^2- \sum_{i=1}^m \mu_i \int_{\mathbb{R}^N} Q_\varepsilon(x-y_i)\bigl(|v_{n,i}|^{2p-2}v_{n,i} - |v_i|^{2p-2}v_i\bigr)w_{n,i}\,dx \\
&~ - \sum_{i\neq j} \lambda_{ij} \int_{\mathbb{R}^N} \bigl(|v_{n,j}|^p|v_{n,i}|^{p-2}v_{n,i} - |v_j|^p|v_i|^{p-2}v_i\bigr) w_{n,i}\,dx\\
&= \|\mathbf{w}_n\|^2+o(1),
\end{aligned}
\end{equation}
where we use the fact $|v_{n,i}|^{2p-2}v_{n,i} \to |v_i|^{2p-2}v_i$ in $L^{\frac{2p}{2p-1}}(\mathbb{R}^N)$, the H\"older inequality and $Q_\varepsilon \in L^\infty$. This yields that $\|\mathbf{w}_n\| \to 0$, contradicts  $\|\mathbf{w}_n\| \nrightarrow 0$.
Therefore, by \cite{Lions1984}, $\{\mathbf{w}_n\}$ is non-vanishing, i.e., there exist constants $R>0$, $\delta>0$ and a sequence $\{z_n\}\subset\mathbb{R}^N$ such that
	\begin{equation}\label{eq3.2}
		\int_{B_R(z_n)} |\mathbf{w}_n|^2\,dx \ge \delta~\text{for all}~n.
	\end{equation}
Observe that $|z_n|\to\infty$. Otherwise, up to a subsequence, $z_n\to z\in\mathbb{R}^N$.
	Then for $n$ large, $B_R(z_n)\subset B_{R+1}(z)$, and since $\mathbf{w}_n\to \mathbf{0}$ in
	$L^2(B_{R+1}(z))^m$, hence
	\[
	\int_{B_R(z_n)} |\mathbf{w}_n(x)|^2\,dx \le \int_{B_{R+1}(z)} |\mathbf{w}_n(x)|^2\,dx \to 0,
	\]
contradicting \eqref{eq3.2}. Thus, up to a subsequence, $|z_n|\to\infty$.

Define two translated sequences
\[
\widetilde{\mathbf{v}}_n(x):=\mathbf{v}_n(x+z_n)~\text{and}~ 
\widetilde{\mathbf{w}}_n(x):=\mathbf{w}_n(x+z_n),~x\in\mathbb{R}^N.
\]
Note that $\widetilde{\mathbf{v}}_n=\widetilde{\mathbf{w}}_n+\mathbf{v}(\,\cdot+z_n)$,
combined with the facts that $\mathbf{v}\in L^2(\mathbb{R}^N)^m$ and $|z_n|\to\infty$, one has
\[
\|\mathbf{v}(\,\cdot+z_n)\|_{L^2(B_R(0))}^2
=\int_{B_R(0)}|\mathbf{v}(x+z_n)|^2\,dx
=\int_{B_R(z_n)}|\mathbf{v}(x)|^2dx\to 0~(n\to\infty). 
\]
Thus for all sufficiently large $n$,
\[
\|\mathbf{v}(\,\cdot+z_n)\|_{L^2(B_R(0))}<\frac{\sqrt{\delta}}{2},
\]
which implies that for large $n$ there holds
\[
\sqrt{\delta}\le\|\widetilde{\mathbf{w}}_n\|_{L^2(B_R(0))}
\le\|\widetilde{\mathbf{v}}_n\|_{L^2(B_R(0))}+\|\mathbf{v}(\,\cdot+z_n)\|_{L^2(B_R(0))}
<\|\widetilde{\mathbf{v}}_n\|_{L^2(B_R(0))}+\frac{\sqrt{\delta}}{2}.
\]
Consequently,
\[
\|\widetilde{\mathbf{v}}_n\|_{L^2(B_R(0))}>\frac{\sqrt{\delta}}{2}~\text{for large}~n. 
\]
Obviously, $\{\widetilde{\mathbf{v}}_n\}$ is bounded in $H$. Going to a subsequence if necessary, we assume
\[
\widetilde{\mathbf{v}}_n\rightharpoonup\widetilde{\mathbf{v}}~\text{in}~H~\text{and}~
\widetilde{\mathbf{v}}_n\to\widetilde{\mathbf{v}}~\text{strongly in }L^2(B_R(0))^m. 
\]
Thus
\[
\|\widetilde{\mathbf{v}}\|_{L^2(B_R(0))}
=\lim_{n\to\infty}\|\widetilde{\mathbf{v}}_n\|_{L^2(B_R(0))}
\ge\frac{\sqrt{\delta}}{2}>0,
\]
which implies $\widetilde{\mathbf{v}}\neq\mathbf{0}$.

\par
	By \textnormal{$(A_3)$}, there exists $R_0>0$ with $\operatorname{supp}(Q)\subset B_{R_0}(0)$. Hence, for each $i=1,\dots,m$,
	\[
	Q_\varepsilon(x-y_i)=Q(\varepsilon x-y_i)=0
	~\text{whenever}~ |x|>\frac{R_0+\max_{1\le i\le m}|y_i|}{\varepsilon}=:R_\varepsilon.
	\]
	In particular, 
\begin{equation}\label{eq3.3}
  \operatorname{supp}\bigl(Q_\varepsilon(\cdot-y_i)\bigr)\subset B_{R_\varepsilon}(0),~ i=1,\dots,m.
\end{equation}

Fix $\varphi\in C_c^\infty(\mathbb{R}^N)$ with $\operatorname{supp}(\varphi)\subset B_{R_1}(0)$ and test $J_\varepsilon'(\mathbf{v}_n)\to0$ with
\[
\boldsymbol{\phi}_n:=\bigl(\varphi(\cdot-z_n) w_1,\dots,\varphi(\cdot-z_n) w_m\bigr)\in H .
\]
Therefore, by \eqref{eq2.2}, we have
\begin{align}\label{eq3.4}
	\langle J_\varepsilon^\prime(\mathbf{v}_n),\boldsymbol{\phi}_n\rangle
	&=\sum_{i=1}^m\int_{\mathbb{R}^N}\Bigl(\nabla v_{n,i}\cdot\nabla\bigl(\varphi(\cdot-z_n)w_i\bigr)
	+v_{n,i}\,\varphi(\cdot-z_n)w_i\Bigr)\,dx \nonumber\\	&~-\sum_{i=1}^m\mu_i\int_{\mathbb{R}^N}Q_\varepsilon(x-y_i)\,|v_{n,i}|^{2p-2}v_{n,i}\,
	\varphi(x-z_n)w_i(x)\,dx \nonumber\\
	&~-\sum_{i\neq j}\lambda_{ij}\int_{\mathbb{R}^N}|v_{n,j}|^{p}|v_{n,i}|^{p-2}v_{n,i}\,
	\varphi(x-z_n)w_i(x)\,dx \;=\;o(1).
\end{align}

Since $|z_n|\to\infty$, for sufficiently large $n$ we have $|z_n|>R_1+R_\varepsilon$.
Then for any $x\in B_{R_1}(z_n)$,
\[
|x|\ge |z_n|-|x-z_n|> (R_1+R_\varepsilon)-R_1=R_\varepsilon .
\]
Hence, by \eqref{eq3.3} we obtain $Q_\varepsilon(x-y_i)=0$ for all $i=1,\dots,m$ and all such $x$.
Therefore,
\[
Q_\varepsilon(\cdot-y_i)\,\varphi(\cdot-z_n)\equiv0 \qquad \text{for all }i=1,\dots,m
\]
for $n$ sufficiently large.
Consequently, for $n$ large \eqref{eq3.4} reduces to
\begin{align}\label{eq3.5}
	&\sum_{i=1}^m\int_{\mathbb{R}^N}\Bigl(\nabla v_{n,i}\cdot\nabla\bigl(\varphi(\cdot-z_n)w_i\bigr)
	+v_{n,i}\,\varphi(\cdot-z_n)w_i\Bigr)\,dx \nonumber\\
	&~-\sum_{i\neq j}\lambda_{ij}\int_{\mathbb{R}^N}|v_{n,j}|^{p}|v_{n,i}|^{p-2}v_{n,i}\,
	\varphi(x-z_n)w_i(x)\,dx=o(1).
\end{align}
Changing variables $x\mapsto x+z_n$ in \eqref{eq3.5}, we obtain
\begin{equation}\label{eq:shifted-test}
	\sum_{i=1}^m\int_{\mathbb{R}^N}\Bigl(\nabla \tilde v_{n,i}\cdot\nabla(\varphi w_i)
	+\tilde v_{n,i}\,\varphi w_i\Bigr)\,dx \nonumber\\
	-\sum_{i\neq j}\lambda_{ij}\int_{\mathbb{R}^N}|\tilde v_{n,j}|^{p}|\tilde v_{n,i}|^{p-2}\tilde v_{n,i}\,
	\varphi w_i\,dx=o(1),
\end{equation}
which implies that $\widetilde{\mathbf{v}}$ is a weak solution of the limit system by letting $n\to \infty$
\begin{equation}\label{eq3.6}
	-\Delta \widetilde{v}_i+\widetilde{v}_i=\sum_{j\neq i}\lambda_{ij}|\widetilde{v}_j|^{p}|\widetilde{v}_i|^{p-2}\widetilde{v}_i
	~\text{in}~\mathbb{R}^N,~i=1,\dots,m.
\end{equation}
	Testing \eqref{eq3.6} by $\widetilde{v}_i$ and summing over $i$, we obtain
	\[
	\sum_{i=1}^m\int_{\mathbb R^N}\bigl(|\nabla \widetilde{v}_i|^2+|\widetilde{v}_i|^2\bigr)\,dx
	=
	\sum_{i\neq j}\lambda_{ij}\int_{\mathbb R^N}|\widetilde{v}_i|^{p}|\widetilde{v}_j|^{p}\,dx \le 0,
	\]
	because $\lambda_{ij}<0$ by \textnormal{$(A_2)$}. Therefore $\widetilde{\mathbf{v}}=\mathbf{0}$, and this is absurd. This contradiction implies that 
$\mathbf{v}_n\to\mathbf{v}$ in $H$.
\par
 Since $\mathbf{v}_n\in\mathcal{N}^{\mathrm{sc}}_\varepsilon$, Lemma \ref{lem2.3} gives a uniform lower bound
    $\|v_{n,i}^\pm\|_{H^1}\ge\delta>0$ for each $i$. This yields that
    $\|v_i^\pm\|_{H^1}\ge\delta>0$, hence $v_i^\pm\neq 0$ for all $i$.
    Moreover, $J_\varepsilon'(\mathbf{v})=0$.
    Testing this with $v_i^\pm$ yields $\langle J_\varepsilon^\prime(\mathbf{v}),v_i^\pm\rangle=0$, so $\mathbf{v}\in\mathcal{N}^{\mathrm{sc}}_\varepsilon\subset\mathcal{A}$.
\par
    Finally, by Lemma \ref{lem2.4},  $m_\varepsilon:\mathcal{A}\to\mathcal{N}^{\mathrm{sc}}_\varepsilon$ is a $C^1$-diffeomorphism, we have $\mathbf{u}_n=m_\varepsilon^{-1}(\mathbf{v}_n)\to m_\varepsilon^{-1}(\mathbf{v})$ strongly in $H$,
    and $m_\varepsilon^{-1}(\mathbf{v})\in\mathcal{A}$. Therefore $\{\mathbf{u}_n\}$ has a convergent subsequence
    in $\mathcal{A}$, proving that $\Psi_\varepsilon$ satisfies the $(PS)_c$ condition.   
\end{proof}

\begin{Rem}\label{rem3.1}
The shrinking potential $Q_\varepsilon$ localizes possible concentration to neighbourhoods of $y_i$, while the competitive couplings prevent energy from escaping to infinity. 
	This is one of the key mechanisms leading to compactness in the nodal setting.
\end{Rem}

\section{Existence of infinitely many sign-changing solutions}\label{sec:existence}
In this section we prove Theorem~\ref{thm1.1}. By Lemma~\ref{lem2.5}(a), the reduced functional
$\Psi_\varepsilon$ is an even $C^1$ functional on the symmetric set
$\mathcal{A}\subset H$. 
Our argument follows a symmetric minimax scheme for the reduced nodal functional
$\Psi_{\varepsilon}$ based on the Krasnosel'skii genus.
The crucial compactness is provided by Lemma \ref{lem3.1},
which ensures that $\Psi_{\varepsilon}$ satisfies the $(PS)_C$ condition
at every level $c$ for each fixed $\varepsilon>0$.
With this at hand, the genus minimax levels yield infinitely many critical points of
$\Psi_{\varepsilon}$, and via the nodal projection $m_{\varepsilon}$,
we obtain infinitely many nodal solutions of \eqref{eq1.1}.
\par
First, we recall the standard Krasnosel'skii genus of symmetrical sets.
\begin{Def}[\cite{Struwe1996}]\label{def4.1}
	A subset $E\subset\mathcal{A}$ is called \emph{symmetric} if $\mathbf{u}\in E$ implies $-\mathbf{u}\in E$.
	The Krasnosel'skii genus $\gamma(E)$ is defined as the least integer $k\in\mathbb{N}$
	such that there exists a continuous odd map $E\to\mathbb{R}^k\setminus\{0\}$.
	If no such $k$ exists, we set $\gamma(E)=\infty$.
\end{Def}
\par
{\bf Proof of Theorem \ref{thm1.1}.}
Fix $k\in\mathbb N^{*}$, we construct a symmetric finite dimensional subspace. Choose functions
\[
\boldsymbol\varphi_{1},\dots,\boldsymbol\varphi_{k}\in C_{c}^{\infty}(\mathbb{R}^{N})^m
\]
with pairwise disjoint supports and such that each component $\varphi_{j,i}$ changes sign in $\boldsymbol\varphi_{j}$'s support for $i=1,\cdots,m$, so that \(\varphi_{j,i}^{+}\) and \(\varphi_{j,i}^{-}\) are both nontrivial for $j=1,\cdots,k$ and $i=1,\cdots,m$.
Let
\[	
E_{k}:=\operatorname{span}\{\boldsymbol\varphi_{1},\dots,\boldsymbol\varphi_{k}\}
\]
and consider the sphere
\[
S_{k}:=\{\,\mathbf{u}\in E_{k}:\|\mathbf{u}\|=1\,\}.
\]
Since \(E_{k}\) is finite dimensional, \(S_{k}\) is compact and symmetric. 
For each $\sigma=(\sigma_1,\dots,\sigma_k)\in\{\pm1\}^{k}$, set
\[
\mathbf{u}_\sigma:=\sum_{j=1}^{k}\sigma_j\,\boldsymbol{\varphi}_j\in H.
\]
 $\mathbf{u}_\sigma\neq 0$ for every $\sigma\in\{\pm1\}^{k}$.
Moreover, the mapping
\[
\boldsymbol\sigma\longmapsto \frac{\mathbf{u}_{\boldsymbol\sigma}}{\|\mathbf{u}_{\boldsymbol\sigma}\|}
\]
induces an odd homeomorphism between the discrete set \(\{\pm1\}^{k}\) and a subset of \(S_{k}\) and hence $\gamma(S_{k})\ge k$. 
\par	
By construction every nonzero \(\mathbf{u}\in E_{k}\) has all components sign‑changing on their supports, hence $S_{k}\subset\mathcal{A}$.	
\par 
Set
\[
B_{k}:=m_{\varepsilon}(S_{k})\subset\mathcal{N}_{\varepsilon}^{\mathrm{sc}} .
\]
Because $m_{\varepsilon}$ is odd and continuous, the monotonicity of the genus under odd maps yields
\[
\gamma(B_{k})\;\ge\;\gamma(S_{k})\;\ge\;k .
\]
Denote
\begin{equation}\label{eq4.1}
  \Sigma_{k}=\{A\subset\mathcal{A}: A~\text{is compact and symmetric with}~\gamma(A)\ge k\},
\end{equation}
and define the minimax level
\begin{equation}\label{eq4.2}
	c_{\varepsilon,k}:=\inf_{A\in\Sigma_{k}}\;\sup_{\mathbf{u}\in A}\Psi_{\varepsilon}(\mathbf{u}). 
\end{equation}
Since $B_{k}\in\Sigma_{k}$ and $\Psi_{\varepsilon}$ is continuous and bounded from below on $B_{k}$, so each $c_{\varepsilon,k}$ is finite.
\par
Next, we claim that $c_{\varepsilon,k}$ is a critical value of $\Psi_{\varepsilon}$.
\par
In fact, by Lemma~\ref{lem3.1}, $\Psi_{\varepsilon}$ satisfies the $(PS)_c$ condition
for every $c\in\mathbb{R}$ on the open set $\mathcal A\subset H$.
Hence the symmetric minimax theorem in terms of the Krasnosel'skii genus (e.g. \cite[Theorem~5.7]{Struwe1996}) with modifications can be
applied. For completeness we give a
deformation argument on the open constraint set $\mathcal A$.

Assume by contradiction that $c_{\varepsilon,k}$ is not a critical value of $\Psi_{\varepsilon}$.
Then there exists $\delta>0$ such that $\Psi_{\varepsilon}$ has no critical values in
$[c_{\varepsilon,k}-2\delta,\,c_{\varepsilon,k}+2\delta]$.
Moreover, there exists $\sigma>0$ such that
\begin{equation}\label{eq4.3}
	\bigl\|\Psi_{\varepsilon}^\prime(\mathbf{u})\bigr\|_{H^\prime}\ge \sigma	,~\forall\mathbf{u}\in\Psi_\varepsilon^{-1}([c_{\varepsilon,k}-2\delta,\,c_{\varepsilon,k}+2\delta])
=\{\mathbf{u}\in \mathcal A:~\Psi_{\varepsilon}(\mathbf{u})\in I\}.
\end{equation}
Indeed, if \eqref{eq4.3} is false, we can find a $(PS)_c$-sequence $(\mathbf{u}_n)\subset\mathcal A$ of $\Psi_\varepsilon$ for some $c\in I$.  
By Lemma~\ref{lem3.1}, up to a subsequence, $\mathbf{u}_n\to \mathbf{u}$ in $H$ and $\mathbf{u}\in\mathcal A$.
Therefore $\Psi_{\varepsilon}'(\mathbf{u})=0$ and $\Psi_{\varepsilon}(\mathbf{u})=c\in I$,
which contradicts the choice of $I$.
Thus \eqref{eq4.3} holds.
\par
 Next we will apply a variant of the quantitative deformation lemma \cite[Lemma~5.15]{Willem1996} on $\mathcal A$. Define the set $\mathcal{S} := \Psi_\varepsilon^{-1}([c_{\varepsilon,k}-2\delta,\,c_{\varepsilon,k}+2\delta])$. In a standard way (\cite{Willem1996}), one can construct a locally Lipschitz continuous pseudo-gradient vector field $V: \mathcal{A} \to H$ satisfying
 \begin{equation}\label{eq4.4}
   \langle \Psi_{\varepsilon}(\mathbf{u}),V(\mathbf{u})\rangle \ge \frac{1}{2} \|\Psi_\varepsilon^\prime(\mathbf{u})\|_{H^\prime}^2,~
   \|V(\mathbf{u})\| \le 2 \|\Psi_{\varepsilon}^\prime(\mathbf{u})\|_{H^\prime},~\mathbf{u}\in \mathcal{S},
 \end{equation}
$ V(-\mathbf{u}) = -V(\mathbf{u})$ for $\mathbf{u}\in \mathcal{A}$, and $V(\mathbf{u})=0$ for $\mathbf{u}\in\mathcal A\setminus \mathcal{S}$.
\par
Choose an even Lipschitz cut-off function $\chi: \mathcal{A} \to [0,1]$ such that
\[
\chi(\mathbf{u}) = 
\begin{cases}
1, & \mathbf{u}\in\Psi_\varepsilon^{-1}([c_{\varepsilon,k} - \delta, c_{\varepsilon,k} + \delta]), \\
0, & \mathbf{u}\in \mathcal{A}\setminus \mathcal{S}.
\end{cases}
\]

\par
Consider the initial value problem
\begin{equation}\label{eq4.5}
\begin{cases}
\displaystyle \frac{d}{dt} \eta(t, \mathbf{u}) = -\chi(\eta(t, \mathbf{u}))V(\eta(t, \mathbf{u})), \\[6pt]
\eta(0, \mathbf{u}) = \mathbf{u}.
\end{cases}
\end{equation}
Since the right-hand side of \eqref{eq4.5} is locally Lipschitz on $\mathcal A$,  
$\chi V$ vanishes near $\partial\mathcal A$, and Remark \ref{rem2.3} implies that $\Psi_\varepsilon\to+\infty$ near $\partial\mathcal A$, the solution $\eta(t,\mathbf{u})$ 
exists for all $t\ge 0$ and stays in $\mathcal A$.  From the symmetry $V(-\mathbf{u}) = -V(\mathbf{u})$ and $\chi(-\mathbf{u}) = \chi(\mathbf{u})$, one has $\eta(t, -\mathbf{u}) = -\eta(t, \mathbf{u})$.
\par
Along the flow lines, by \eqref{eq4.4} and \eqref{eq4.5}, we have
\begin{equation*}
\begin{aligned}
\frac{d}{dt} \Psi_{\varepsilon}(\eta(t, \mathbf{u})) 
&= \langle \Psi_{\varepsilon}^\prime (\eta(t, \mathbf{u})),\frac{d}{dt} \eta(t, \mathbf{u})\rangle \\
&= -\chi(\eta) \langle\Psi_{\varepsilon}^\prime (\eta(t, \mathbf{u})),V(\eta(t, \mathbf{u}))\rangle\\ 
&\le -\frac{1}{2} \chi(\eta) \|\Psi_{\varepsilon}^\prime (\eta(t, \mathbf{u}))\|_{H^\prime}^2\\ 
&\le -\frac{\sigma^2}{2} \chi(\eta).
\end{aligned}
\end{equation*}
In particular, if $\Psi_{\varepsilon}(\mathbf{u}) \in [c_{\varepsilon,k} - \delta, c_{\varepsilon,k} + \delta]$, then $\chi(\mathbf{u}) = 1$ and thus
\[
\Psi_{\varepsilon}(\eta(t, \mathbf{u})) \le \Psi_{\varepsilon}(\mathbf{u}) - \frac{\sigma^2}{2} t.
\]
Set $T := \frac{4\delta}{\sigma^2}$.
Then for any $\mathbf{u} \in \Psi_{\varepsilon}^{-1}([c_{\varepsilon,k} - \delta, c_{\varepsilon,k} + \delta])$, we have
\[
\Psi_{\varepsilon}(\eta(T, \mathbf{u})) \le c_{\varepsilon,k} + \delta - \frac{\sigma^2}{2} \cdot T = c_{\varepsilon,k} - \delta.
\]
Define $\varphi: \mathcal{A} \to \mathcal{A}$ by
\[
\varphi(\mathbf{u}) := \eta(T, \mathbf{u}).
\]
Then $\varphi$ satisfies
\begin{align*}
	&\text{(a)}~\varphi(\mathbf{u})=\mathbf{u}~\text{if}~t=0~\text{or}~ 
	\mathbf{u}\notin \Psi_{\varepsilon}^{-1}\bigl([c_{\varepsilon,k}-2\delta,c_{\varepsilon,k}+2\delta]\bigr),\\
	&\text{(b)}~\varphi(\{\mathbf{u}\in \mathcal{A}:\Psi_{\varepsilon}(\mathbf{u})\le c_{\varepsilon,k}+\delta\})
	\subset \{\mathbf{u}\in \mathcal{A}:\Psi_{\varepsilon}(\mathbf{u})\le c_{\varepsilon,k}-\delta\},\\
    &\text{(c)}~\varphi~\text{is continuous and odd}.
\end{align*}
\par
By the definition of $c_{\varepsilon,k}$, choose $A\in\Sigma_k$ such that
$\sup_{\mathbf{u}\in A}\Psi_{\varepsilon}(\mathbf{u})\le c_{\varepsilon,k}+\delta$.
Then by (b),
\[
\sup_{\mathbf{u}\in \varphi(A)}\Psi_{\varepsilon}(\mathbf{u})\le c_{\varepsilon,k}-\delta.
\]
By (c),
$\gamma(\varphi(A))\ge \gamma(A)\ge k$, so $\varphi(A)\in\Sigma_k$.
Therefore,
\[
c_{\varepsilon,k}\le \sup_{\mathbf{u}\in \varphi(A)}\Psi_{\varepsilon}(\mathbf{u})\le c_{\varepsilon,k}-\delta,
\]
a contradiction. Hence $c_{\varepsilon,k}$ is a critical value of $\Psi_{\varepsilon}$.
\par
Finally, we show that $c_{\varepsilon,k}\to+\infty$ as $k\to\infty$.
This is a variant of a standard argument for unboundedness of genus minimax levels
(see, e.g., \cite[Proposition~9.33]{Rabinowitz1986}).
Assume by contradiction that $\{c_{\varepsilon,k}\}$ is bounded above and set
\[
\bar c_\varepsilon:=\sup_{k\ge1}c_{\varepsilon,k}<+\infty .
\]
Since $\Sigma_{k+1}\subset\Sigma_k$, the sequence $\{c_{\varepsilon,k}\}$ is nondecreasing and hence
$c_{\varepsilon,k}\uparrow\bar{c}_\varepsilon$ as $k\to\infty$.
If $c_{\varepsilon,k}=\bar{c}_\varepsilon$ for all $k$ large enough, then $\gamma(\mathcal{K}_{\bar{c}_\varepsilon})=\infty$ (see \cite{Rabinowitz1986}),
where 
\begin{equation*}
\mathcal{K}_{\bar{c}_\varepsilon}=\{\mathbf{u}\in \mathcal{A}:~\Psi_\varepsilon^\prime(\mathbf{u})=0
~\text{and}~\Psi_\varepsilon(\mathbf{u})=\bar{c}_\varepsilon\}.
\end{equation*}
But by Lemma \ref{lem3.1}, $\mathcal{K}_{\bar{c}_\varepsilon}$ is compact and so $\gamma(\mathcal{K}_{\bar{c}_\varepsilon})<\infty$. This contradiction implies that 
$c_{\varepsilon,k}<\bar{c}_\varepsilon$ for all $k$.
\par
Fix $\delta\in(0,1)$ and choose $k_0\ge1$ such that $c_{\varepsilon,k_0}>\bar{c}_\varepsilon-\delta$.
Consider the symmetric critical set
\[
\mathcal{K}:=\Bigl\{\mathbf{u}\in\mathcal A:\ \Psi_{\varepsilon}'(\mathbf{u})=\mathbf{0},\
\Psi_{\varepsilon}(\mathbf{u})\in[\bar{c}_\varepsilon-\delta,\bar{c}_\varepsilon]\Bigr\}.
\]
Using Lemma~\ref{lem3.1} again, $\mathcal{K}$ is compact in $H$. Moreover, $\mathcal{K}$ is symmetric since $\Psi_{\varepsilon}$ is even.
Hence there exists an open symmetric neighborhood $\mathcal O\subset\mathcal A$ of $\mathcal{K}$
such that $q:=\gamma(\mathcal O)<\infty$ (see \cite{Rabinowitz1986}).
Then, arguing as in \eqref{eq4.3}, there exists $\sigma_\delta>0$ such that
\[
\|\Psi_\varepsilon^\prime(u)\|_{H^\prime}\ge \sigma_\delta
~\text{for all}~\mathbf{u}\in \Psi_\varepsilon^{-1}([\bar{c}_\varepsilon-\delta,\bar{c}_\varepsilon+\delta])\setminus\mathcal O.
\]
Therefore one can repeat the deformation construction above on the open set $\mathcal A$
(based on \eqref{eq4.3}--\eqref{eq4.5}) with the cut-off supported in
$\Psi_{\varepsilon}^{\,\bar{c}_\varepsilon+\delta}\setminus\mathcal O$ and obtain an odd continuous map
$\varphi:\mathcal A\to\mathcal A$ such that
\begin{equation}\label{eq4.6}
	\varphi\bigl(\Psi_{\varepsilon}^{\,\bar{c}_\varepsilon+\delta}\setminus\mathcal O\bigr)\subset \Psi_{\varepsilon}^{\,\bar{c}_\varepsilon-\delta}.
\end{equation}
Next, by the definition of $c_{\varepsilon,k_0+q}$, choose $A\in\Sigma_{k_0+q}$ satisfying
\[
\sup_{\mathbf{u}\in A}\Psi_{\varepsilon}(\mathbf{u})< c_{\varepsilon,k_0+q}+\delta< \bar{c}_\varepsilon+\delta,
\]
so $A\subset \Psi_{\varepsilon}^{\,\bar{c}_\varepsilon+\delta}$.
Set $A_1:=A\setminus\mathcal O$. Then $A_1$ is compact and symmetric. By the subadditivity of the genus and the choice $q=\gamma(\mathcal O)$, we have
\[
k_0+q\le \gamma(A)\le \gamma(A\cap\mathcal O)+\gamma(A_1)\le q+\gamma(A_1),
\]
which implies $\gamma(A_1)\ge k_0$, i.e.\ $A_1\in\Sigma_{k_0}$.
Since $\varphi$ is odd and continuous, the genus monotonicity yields
$\gamma(\varphi(A_1))\ge\gamma(A_1)\ge k_0$, and thus $\varphi(A_1)\in\Sigma_{k_0}$. On the other hand, by \eqref{eq4.6}, we have $\varphi(A_1)\subset\Psi_{\varepsilon}^{\,\bar{c}_\varepsilon-\delta}$, which implies
\[
c_{\varepsilon,k_0}\le \sup_{\mathbf{u}\in\varphi(A_1)}\Psi_{\varepsilon}(\mathbf{u})\le \bar{c}_\varepsilon-\delta,
\]
contradicting $c_{\varepsilon,k_0}>\bar{c}_\varepsilon-\delta$.
This contradiction shows that $\{c_{\varepsilon,k}\}$ is unbounded, and therefore
$c_{\varepsilon,k}\to+\infty$ as $k\to\infty$.
\par

Consequently, for each $k\in\mathbb{N}^{*}$ there exists $\mathbf{v}_\varepsilon^{(k)}\in\mathcal{A}$ such that
\[
\Psi_{\varepsilon}\bigl(\mathbf{v}_\varepsilon^{(k)}\bigr)=c_{\varepsilon,k}
~\text{and}~
\Psi_{\varepsilon}^\prime\bigl(\mathbf{v}_\varepsilon^{(k)}\bigr)=0 .
\]

Set
\[
\mathbf{u}_\varepsilon^{(k)}:=m_{\varepsilon}\bigl(\mathbf{v}_\varepsilon^{(k)}\bigr)\in\mathcal{N}_{\varepsilon}^{\mathrm{sc}} .
\]
By Lemma~\ref{lem2.5}(b), $\mathbf{u}_\varepsilon^{(k)}$ is a critical point of $J_{\varepsilon}$ in $H$,
hence a weak solution of system \eqref{eq1.1}. Moreover,
\[
J_\varepsilon(\mathbf{u}_\varepsilon^{(k)})
=J_\varepsilon\!\bigl(m_\varepsilon(\mathbf{u}^{(k)})\bigr)
=\Psi_\varepsilon(\mathbf{u}^{(k)})=c_{\varepsilon,k}.
\]

Since $c_{\varepsilon,k}\to+\infty$ as $k\to\infty$, after relabeling, we may assume the energies are strictly increasing:
\[
0<J_\varepsilon\bigl(\mathbf{u}_\varepsilon^{(1)}\bigr)
<J_\varepsilon\bigl(\mathbf{u}_\varepsilon^{(2)}\bigr)
<\cdots<J_\varepsilon\bigl(\mathbf{u}_\varepsilon^{(k)}\bigr)
<\cdots,
~
\lim_{k\to\infty}J_\varepsilon\bigl(\mathbf{u}_\varepsilon^{(k)}\bigr)=+\infty.
\]
This completes the proof of Theorem~\ref{thm1.1}.

\begin{Rem}\label{rem4.1}
	When $m=1$, the fully sign-changing Nehari set $\mathcal{N}_{\varepsilon}^{\mathrm{sc}}$
	reduces to the classical nodal Nehari set for a single Schr\"odinger equation with a shrinking region
	of attraction. In this scalar case, our variational construction is consistent with the known results
	on the existence a least-energy nodal solution for each fixed
	$\varepsilon>0$; (see \cite{ClappHernandezSantamariaSaldana2025}).
	In this sense, Theorem~\ref{thm1.1} genuinely extends the scalar theory to competitive
	systems with $m\ge2$ components, where all components are required to be sign-changing.
\end{Rem}

\section{Concentration of sign-changing solutions as $\varepsilon\to 0^+$}
\label{sec:concentration}

In this section we prove Theorem~\ref{thm1.2}, which describes the concentration behavior
of sign-changing solutions obtained in Theorem~\ref{thm1.1} and the structure of their limiting profiles as $\varepsilon \to 0$.
\par
First, we recall the limiting autonomous system~\eqref{eq1.2}
\begin{equation}\notag
	\begin{cases}
		-\Delta U_i + U_i
		= \mu_i Q(-y_i)\,|U_i|^{2p-2}U_i
		+ \displaystyle\sum_{j\neq i}\lambda_{ij}\,|U_j|^p\,|U_i|^{p-2}U_i,
		\\[2mm]
		U_i \in H^1(\mathbb{R}^N), ~ i=1,\dots,m.
	\end{cases}
\end{equation}
The corresponding energy functional associated with \eqref{eq1.2} is
\begin{equation*}
	J_0(\mathbf{U})
	= \frac12\sum_{i=1}^m\int_{\mathbb{R}^N}\bigl(|\nabla U_i|^2+|U_i|^2\bigr)\,dx
	-\frac1{2p}\sum_{i=1}^m \mu_i Q(-y_i)\int_{\mathbb{R}^N}|U_i|^{2p}\,dx
	-\frac1{2p}\sum_{i\neq j}\lambda_{ij}\int_{\mathbb{R}^N}|U_i|^p|U_j|^p\,dx,
\end{equation*}
and the corresponding Nehari manifold is
\begin{equation*}
	\mathcal{N}_0
	:= \bigl\{\mathbf{U}\in H\setminus\{\mathbf{0}\}:\ \langle J_0^\prime(\mathbf{U}),\mathbf{U}\rangle=0\bigr\}.
\end{equation*}
Denote $\alpha:=\inf\limits_{\mathbf{U}\in\mathcal{N}_0}J_0(\mathbf{U})$, then $\alpha>0$ (see \cite{ClappSaldanaSzulkin2025}).
All definitions and properties introduced in Section~\ref{sec:framework} for $J_\varepsilon$ and $\Psi_\varepsilon$
can be repeated for $\varepsilon=0$. In particular, the fully sign-changing Nehari set is denoted by
$\mathcal{N}^{\mathrm{sc}}_0$, the reduced functional $\Psi_0:\mathcal{A}\to \mathbb{R}$ is defined by
\[
\Psi_0(\mathbf{u})
= J_0\bigl(m_0(\mathbf{u})\bigr),~ \mathbf{u}\in\mathcal{A},
\]
where $m_0:\mathcal{A}\to \mathcal{N}^{\mathrm{sc}}_0$ is the corresponding nodal projection.
\par
However, the functional $\Psi_0$ does not satisfy the $(PS)_c$ condition since it is invariant under translations. Therefore, we establish below the $(PS)_c$ property
modulo $\mathbb Z^N$-translations, which is sufficient for the genus minimax argument.
\par
For every $a\in\mathbb R$, set 
\begin{equation*}
  \mathcal{K}^a:=\{\mathbf u\in \mathcal A:~\Psi_0^\prime(\mathbf u)=0~\text{and}~\Psi_0(\mathbf u)\le a\}.
\end{equation*}
\begin{Lem}\label{Lem5.1}
The set of critical orbits $\mathcal{K}^a/\mathbb Z^N$ is compact in the natural quotient topology induced by $H$.
\end{Lem}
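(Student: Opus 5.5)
We show that every sequence in $\mathcal K^a$ has, after suitable integer translations, a subsequence converging in $H$ to a point of $\mathcal K^a$. This is sequential compactness of $\mathcal K^a/\mathbb Z^N$ in the quotient topology, which is metrizable by $\operatorname{dist}([\mathbf u],[\mathbf w])=\inf_{k\in\mathbb Z^N}\|\mathbf u-k*\mathbf w\|$. Two facts justify working with orbits. First, $J_0$ is invariant under translations, so the nodal projection is equivariant, $m_0(k*\mathbf u)=k*m_0(\mathbf u)$, and hence $\Psi_0$ and $\mathcal K^a$ are $\mathbb Z^N$-invariant. Second, by Lemma~\ref{lem2.4} and Lemma~\ref{lem2.5}(b) for $\varepsilon=0$, the map $m_0$ is a homeomorphism from $\mathcal K^a$ onto
\[
\{\mathbf v\in\mathcal N_0^{\mathrm{sc}}:\ J_0'(\mathbf v)=0,\ J_0(\mathbf v)\le a\}.
\]
It therefore suffices to prove the compactness statement for critical points $\mathbf v_n:=m_0(\mathbf u_n)$ of $J_0$ lying in $\mathcal N_0^{\mathrm{sc}}$ with $J_0(\mathbf v_n)\le a$. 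Convergence then transfers back, since $\mathbf u_n=m_0^{-1}(\mathbf v_n)$ and $m_0^{-1}$ is continuous and commutes with translations.

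\textbf{Step 1: bounds and choice of translation.} By Lemma~\ref{lem2.5}(d), $\|\mathbf v_n\|^2=J_0(\mathbf v_n)/\alpha\le a/\alpha$, so $\{\mathbf v_n\}$ is bounded. By Lemma~\ref{lem2.3}, $\|v_{n,1}^+\|_{H^1}\ge\delta$. Testing $J_0'(\mathbf v_n)=0$ with $v_{n,1}^+$ gives, via \eqref{eq2.3} and $\lambda_{ij}<0$,
\[
\delta^2\le\mu_1Q(-y_1)\int_{\mathbb R^N}|v_{n,1}^+|^{2p}\,dx .
\]
Hence $v_{n,1}^+$ does not vanish in $L^{2p}$, and by Lions' lemma there are $k_n\in\mathbb Z^N$ (cubes of side one) such that $\int_{B_{R}(k_n)}|v_{n,1}^+|^2\,dx\ge\eta>0$.

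\textbf{Step 2: weak limit.} Set $\tilde{\mathbf v}_n:=(-k_n)*\mathbf v_n$. These are still critical points of $J_0$ in $\mathcal N_0^{\mathrm{sc}}$. Up to a subsequence, $\tilde{\mathbf v}_n\rightharpoonup\mathbf v$ in $H$ with $\tilde{\mathbf v}_n\to\mathbf v$ in $L^q_{\mathrm{loc}}$. Since the equations are autonomous, $\mathbf v$ is a critical point of $J_0$, and $v_1^+\neq0$.

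\textbf{Step 3: strong convergence (main obstacle).} Weak convergence does not preserve the constraints $v_i^\pm\neq0$, and mass may split into several bumps drifting apart. The scheme of Lemma~\ref{lem3.1} used the compact support of $Q_\varepsilon$ to kill escaping bumps, and that device is unavailable for $J_0$. I would instead run a profile decomposition (splitting lemma) for the bounded $(PS)$ sequence $\tilde{\mathbf v}_n$ of the autonomous functional:
\[
\tilde{\mathbf v}_n=\mathbf v+\sum_{\ell}(z_n^\ell*\mathbf w^\ell)+o(1),\qquad
J_0(\tilde{\mathbf v}_n)=J_0(\mathbf v)+\sum_\ell J_0(\mathbf w^\ell)+o(1),
\]
where each $\mathbf w^\ell$ is a nontrivial critical point of $J_0$ and $|z_n^\ell-z_n^{\ell'}|\to\infty$. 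Each nontrivial critical point has energy at least $\alpha>0$, so there are only finitely many profiles. The difficulty is to exclude them. With $\lambda_{ij}<0$ and all components present, splitting cannot be ruled out by energy alone, and I doubt the statement holds literally without additional structure.

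A workable alternative is to reinterpret the claim as compactness of the \emph{critical set up to translations}, which only requires ruling out dichotomy at the level of limits. I would argue as follows. Any configuration with at least two profiles has energy at least $2\alpha$ and is a limit in the quotient of multi-bump configurations; the genus argument only needs that $\mathcal K^{a}/\mathbb Z^N$ is a compact metric space of finite genus. I would try to show that the limiting object is reached either directly, in which case we are done via Brezis--Lieb, or by choosing $k_n$ so that the leftover $\tilde{\mathbf w}_n=\tilde{\mathbf v}_n-\mathbf v$ vanishes, exactly as in \eqref{eq3.1}.

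In the vanishing case, $\|\tilde{\mathbf w}_n\|\to0$ follows as in \eqref{eq3.1}. Once strong convergence holds, Lemma~\ref{lem2.3} gives $\|v_i^\pm\|_{H^1}\ge\delta$, so $\mathbf v\in\mathcal N_0^{\mathrm{sc}}$ and $J_0(\mathbf v)\le a$, which closes the argument. The step I expect to fail or require real work is excluding the non-vanishing remainder $\tilde{\mathbf w}_n$. I would first attempt it by an energy-induction argument on the discrete set of levels of sums of critical values.
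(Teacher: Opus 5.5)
Your reduction to critical points $\mathbf v_n=m_0(\mathbf u_n)$ of $J_0$ and your Steps~1--2 are fine and parallel the paper. Step~3, however, is the entire content of the lemma, and you leave it open, so as it stands this is not a proof.

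Once you have the splitting $\mathbf v_n=\sum_{j=1}^{\ell}\mathbf V^{(j)}(\cdot-z_n^{(j)})+o(1)$ in $H$ with $|z_n^{(i)}-z_n^{(j)}|\to\infty$, convergence of a subsequence modulo $\mathbb Z^N$ amounts exactly to $\ell=1$. Equivalently, your remainder $\tilde{\mathbf w}_n=\tilde{\mathbf v}_n-\mathbf v$ must vanish. None of your suggested remedies gives this:
\begin{itemize}
\item The identity $J_0(\mathbf v_n)\to\sum_j J_0(\mathbf V^{(j)})\le a$ is compatible with up to $a/\alpha$ profiles. So neither energy bounds nor an induction over sums of critical levels can rule out splitting. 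Those levels are not known to be discrete, and even if they were, a multi-bump sequence would simply sit at such a sum.
\item A sequence with $\ell\ge2$ has no convergent subsequence in $\mathcal K^a/\mathbb Z^N$ at all. It would be a counterexample to the lemma, not something the genus argument can absorb.
\item The full sign-change constraint does not prevent splitting, since the profiles need not individually lie in $\mathcal N_0^{\mathrm{sc}}$.
\end{itemize}
What is missing is a nonexistence result for critical points of $J_0$ made of two or more bumps with diverging mutual distances. Nothing in the paper supplies one.

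The paper runs exactly your decomposition \eqref{eq5.2}--\eqref{eq5.3} and then closes the gap by assertion. After translating by $k_n=k_n^{(1)}$ it writes $k_n*\mathbf v_n=\mathbf V^{(1)}(\cdot+\xi_n^{(1)})+\tilde{\mathbf r}_n^{(1)}+o(1)$ in $H$. This is true only when $\ell=1$. For $i\neq1$ the terms $\mathbf V^{(i)}\bigl(\cdot-(z_n^{(i)}-k_n^{(1)})\bigr)$ tend to zero in $L^q_{\mathrm{loc}}$ and weakly, which is all the paper checks, but their $H$-norm is constantly $\|\mathbf V^{(i)}\|>0$. So your doubt is justified: neither argument proves the statement as written.

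A sound version needs extra structure. One option is the Szulkin--Weth device: assume, for contradiction, that there are only finitely many critical orbits, so that $\mathcal K^a/\mathbb Z^N$ is finite and discreteness of Palais--Smale sequences replaces compactness in the deformation step.

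Be aware also that your transfer back via $m_0^{-1}$, like the paper's, rests entirely on the injectivity claimed in Lemma~\ref{lem2.4}. There the $t_i^\pm$ are computed with the other components frozen, so $m_0(\mathbf u)$ need not satisfy all $2m$ constraints at once. Conversely, any projection that does satisfy them is constant on the cones $\{(s_i^+u_i^++s_i^-u_i^-)_i:\ s\in(0,\infty)^{2m}\}$, and is therefore not injective. By Lemma~\ref{lem2.5}(b), such a projection would also put these unbounded cones inside $\mathcal K^a$.
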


\begin{proof}
	Let $\{\mathbf u_n\}\subset \mathcal{K}^a$ and set $\mathbf v_n:=m_0(\mathbf u_n)\in \mathcal N_{0}^{\mathrm{sc}}$, then $\Psi_0(\mathbf u)=J_0\bigl(m_0(\mathbf u)\bigr)$ for $\mathbf u\in\mathcal A$, we obtain
	\[
	J_0(\mathbf v_n)=\Psi_0(\mathbf u_n)\le a.
	\]
	Moreover, since $\Psi_0^\prime(\mathbf u_n)=0$, we have $J_0^\prime(\mathbf v_n)=0$ and
\begin{equation}\label{eq5.1}
 J_0(\mathbf v_n)=\Bigl(\frac12-\frac1{2p}\Bigr)\|\mathbf v_n\|^2. 
\end{equation}
	Therefore, $\{\mathbf v_n\}$ is a bounded sequence of critical points of $J_0$. Going to a subsequence if necessary, we may assume 
$J_0(\mathbf v_n)\to c$ for some $c\le a$, and hence $\{\mathbf v_n\}$ is a bounded $(PS)_c$ sequence for $J_0$.
Therefore, by the global compactness principle for translation invariant problems
(see \cite{SzulkinWeth2009} or \cite{Willem1996}), there exist an integer $\ell\ge 1$,
sequences $\{z_n^{(j)}\}\subset\mathbb R^N$ for $j=1,\dots,\ell$ satisfying
\[
|z_n^{(i)}-z_n^{(j)}|\to\infty~\text{as }n\to\infty \ \text{ whenever }i\neq j,
\]
and nontrivial critical points $\mathbf V^{(1)},\dots,\mathbf V^{(\ell)}\in H$ of $J_0$ such that
\begin{equation}\label{eq5.2}
	\mathbf v_n=\sum_{j=1}^{\ell}\mathbf V^{(j)}(\cdot-z_n^{(j)})+\mathbf r_n,
	~\mathbf r_n\to\mathbf 0~\text{in}~H,
\end{equation}
and the energy splitting holds:
\begin{equation}\label{eq5.3}
	J_0(\mathbf v_n)=\sum_{j=1}^{\ell}J_0(\mathbf V^{(j)})+o(1)~\text{as}~n\to\infty.
\end{equation}
In particular, for each $j=1,\dots,\ell$, we have $J_0^\prime(\mathbf V^{(j)})=\mathbf 0$ in $H^\prime$ and
$\mathbf V^{(j)}\neq\mathbf 0$.
Furthermore, since $\mathbf V^{(j)}\neq \mathbf 0$ is a critical point of $J_0$,
we have $J_0(\mathbf V^{(j)})\ge \alpha$ for each $j$.
Hence, by \eqref{eq5.3} and $J_0(\mathbf v_n)\le a$, we obtain
\[
\sum_{j=1}^{\ell}J_0(\mathbf V^{(j)})=\lim_{n\to\infty}J_0(\mathbf v_n)\le a,
\]
and therefore necessarily $\ell<\infty$.
\par
	For each $j\in\{1,\dots,\ell\}$, choose $k_n^{(j)}\in\mathbb Z^N$ such that
	\begin{equation*}
		|z_n^{(j)}-k_n^{(j)}|\le \frac{\sqrt N}{2}~\text{for all}~n,
	\end{equation*}
	and write
	\[
	z_n^{(j)}=k_n^{(j)}+\xi_n^{(j)}~\text{with}~\xi_n^{(j)}\in\Bigl[-\frac12,\frac12\Bigr]^N.
	\]
	Passing to a subsequence if necessary, we may assume that
	\[
	\xi_n^{(j)}\to\xi^{(j)}\in\Bigl[-\frac12,\frac12\Bigr]^N
	~\text{for each}~j=1,\dots,\ell.
	\]
\par	
	Define $\tilde{\mathbf r}_n^{(j)}:=k_n^{(j)}*\mathbf r_n$, then
	$\tilde{\mathbf r}_n^{(j)}\to\mathbf0$ in $H$.
	Moreover, for each fixed $j$ we consider the translated sequence
	\[
	\mathbf v_n^{(j)}:=k_n^{(j)}*\mathbf v_n=\mathbf v_n(\cdot-k_n^{(j)}).
	\]
	Using \eqref{eq5.2} and $z_n^{(j)}-k_n^{(j)}=\xi_n^{(j)}$, we obtain
	\begin{equation}\label{eq5.4}
		\mathbf v_n^{(j)}=\mathbf V^{(j)}(\cdot+\xi_n^{(j)})
		+\sum_{\substack{i=1\\ i\neq j}}^{\ell}\mathbf V^{(i)}\bigl(\cdot-(z_n^{(i)}-k_n^{(j)})\bigr)
		+\tilde{\mathbf r}_n^{(j)} .
	\end{equation}
	
	We claim that the other profiles vanish locally after this discretization.
	Indeed, since $|z_n^{(i)}-z_n^{(j)}|\to\infty$ for $i\neq j$ and
	$z_n^{(j)}-k_n^{(j)}=\xi_n^{(j)}$ is bounded, we have
	\[
	|z_n^{(i)}-k_n^{(j)}|
	=\bigl|z_n^{(i)}-z_n^{(j)}+\xi_n^{(j)}\bigr|\to\infty
	~\text{for every}~i\neq j.
	\]
	Consequently, for every $R>0$ and every $2\le q<2^*$,
	\[
	\bigl\|\mathbf V^{(i)}(\cdot-(z_n^{(i)}-k_n^{(j)}))\bigr\|_{L^q(B_R)^m}
	=\|\mathbf V^{(i)}\|_{L^q(B_R(z_n^{(i)}-k_n^{(j)}))^m}\to 0,
	~ i\neq j,
	\]
	because $B_R(z_n^{(i)}-k_n^{(j)})$ drifts to infinity and $\mathbf V^{(i)}\in L^q(\mathbb R^N)^m$.
	
	On the other hand, since $\xi_n^{(j)}\to\xi^{(j)}$, 
	\[
	\mathbf V^{(j)}(\cdot+\xi_n^{(j)})\to \mathbf V^{(j)}(\cdot+\xi^{(j)})
	~\text{in}~H.
	\]
	Combining these facts with \eqref{eq5.4} and $\tilde{\mathbf r}_n^{(j)}\to 0$ in $H$,
	we conclude that for every $R>0$ and every $2\le q<2^*$,
	\begin{equation*}
		\mathbf v_n^{(j)} \to \mathbf V^{(j)}(\cdot+\xi^{(j)}) ~\text{in}~L^q(B_R)^m
		~\text{and a.e.\ in }B_R.
	\end{equation*}
	In particular, $\mathbf v_n^{(j)}\rightharpoonup \mathbf V^{(j)}(\cdot+\xi^{(j)})$ in $H$. It is obvious that each $\mathbf V^{(j)}(\cdot+\xi^{(j)})$ is again a nontrivial critical point of $J_0$, since $J_0^\prime(\mathbf V^(j))=\mathbf 0 $ and 
$J_0$ is translation invariant.	
\par	
	Fix $j=1$ and set $k_n:=k_n^{(1)}\in\mathbb Z^N$. Then, by the decomposition
	\eqref{eq5.2} and the separation property $|z_n^{(i)}-z_n^{(1)}|\to\infty$ for $i\neq 1$,
	we have
	\[
	k_n*\mathbf v_n=\mathbf V^{(1)}(\cdot+\xi_n^{(1)})+\tilde{\mathbf r}_n^{(1)}+o(1)~\text{in}~H,
	\]
	where $\tilde{\mathbf r}_n^{(1)}\to\mathbf0$ in $H$. Hence
	\[
	k_n*\mathbf v_n\to \mathbf V^{(1)}(\cdot+\xi^{(1)})~\text{in}~H.
	\]
\par
We first verify the equivariance of $m_0$, i.e.,
for every $k\in\mathbb Z^N$ and every $\mathbf u\in\mathcal A$,
\begin{equation}\label{eq5.5}
	m_0(k*\mathbf u)=k*m_0(\mathbf u).
\end{equation}
Indeed, note that $(k*u_i)^\pm = k*(u_i^\pm)$ for each $i$.
Write
\[
m_0(\mathbf u)
=\sum_{i=1}^m\Bigl(t_i^+(\mathbf u)\,u_i^+ + t_i^-(\mathbf u)\,u_i^-\Bigr),
\]
where $(t_i^+(\mathbf u),t_i^-(\mathbf u))$ is the unique scaling pair sending $\mathbf u$ onto
$\mathcal N_0^{\mathrm{sc}}$.
Since $J_0$ is translation invariant and the defining equations for $(t_i^+,t_i^-)$ involve only
integrals of $u_i^\pm$ and $|u_i^\pm u_j^\pm|$ (which are invariant under translations), the same pair
$(t_i^+(\mathbf u),t_i^-(\mathbf u))$ also sends $k*\mathbf u$ onto $\mathcal N_0^{\mathrm{sc}}$.
By uniqueness of the scalings, $t_i^\pm(k*\mathbf u)=t_i^\pm(\mathbf u)$ for all $i$, and therefore
\[
m_0(k*\mathbf u)
=\sum_{i=1}^m\Bigl(t_i^+(\mathbf u)\,(k*u_i^+) + t_i^-(\mathbf u)\,(k*u_i^-)\Bigr)
=k*\sum_{i=1}^m\Bigl(t_i^+(\mathbf u)\,u_i^+ + t_i^-(\mathbf u)\,u_i^-\Bigr)
=k*m_0(\mathbf u),
\]
which proves \eqref{eq5.5}.

\medskip
We now deduce the compactness of critical orbits for $\Psi_0$.
Let $\{\mathbf u_n\}\subset \mathcal{K}^a$ be arbitrary and set
\[
\mathbf v_n:=m_0(\mathbf u_n)\in\mathcal N^{\mathrm{sc}}_0.
\]
Then $J_0(\mathbf v_n)=\Psi_0(\mathbf u_n)\le a$, and moreover, $\Psi_0^\prime(\mathbf u_n)=\mathbf 0$ implies
$J_0^\prime(\mathbf v_n)=\mathbf 0$.
Hence $\{\mathbf v_n\}$ is a bounded sequence of critical points of $J_0$ in $H$.
By the above arguments, there exists a sequence $\{k_n\}\subset\mathbb Z^N$
and a nontrivial critical point $\mathbf V\in H$ of $J_0$ such that
\[
k_n*\mathbf v_n \to \mathbf V~\text{in}~H.
\]
Using the equivariance \eqref{eq5.5}, we have
\[
m_0(k_n*\mathbf u_n)=k_n*m_0(\mathbf u_n)=k_n*\mathbf v_n \to \mathbf V~\text{in}~H.
\]
Finally, since
$m_0:\mathcal A\to\mathcal N^{\mathrm{sc}}_0$ is a homeomorphism, we obtain
\[
k_n*\mathbf u_n = m_0^{-1}(k_n*\mathbf v_n)\to m_0^{-1}(\mathbf V)~\text{in}~H,
\]
which implies that the $\mathbb Z^N$--orbit of $\{\mathbf u_n\}$ is precompact in $H$, and hence $\mathcal{K}^a/\mathbb Z^N$
is compact in the quotient topology induced by $H$.
This completes the proof of Lemma~\ref{Lem5.1}.
\end{proof}

Therefore, as in section \ref{sec:existence}, we can construct a sequence of critical values $\{c_{0,k}\}$ of $\Psi_0$.
\begin{Thm}\label{Thm5.2}
	Assume $N\ge1$, $m\ge2$, and \textbf{\textup{$(A_1)$–$(A_3)$}}. Then the limit system
	\eqref{eq1.2} admits a sequence of sign-changing solutions
	\[
	\mathbf{U}^{(k)}=(U^{(k)}_{1},\dots,U^{(k)}_{m})\in H,~k\in\mathbb{N}^*,
	\]
	with
	\[
	0<c_{0,1}=J_0\bigl(\mathbf{U}^{(1)}\bigr)
	< \cdots <c_{0,k}=J_0\bigl(\mathbf{U}^{(k)}\bigr)
	< \cdots,~\hbox{and}~\lim\limits_{k\to\infty}
	J_0\bigl(\mathbf{U}^{(k)}\bigr)=+\infty.
	\]
\end{Thm}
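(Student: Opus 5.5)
We follow the genus minimax scheme of Section~\ref{sec:existence}, with $\Psi_0$ in place of $\Psi_\varepsilon$ and the $(PS)_c$ condition of Lemma~\ref{lem3.1} replaced by compactness modulo $\mathbb Z^N$. The constructions of Section~\ref{sec:framework} carry over verbatim for $\varepsilon=0$, since they only use $\mu_iQ(-y_i)>0$ and $\lambda_{ij}<0$. Thus $m_0:\mathcal A\to\mathcal N^{\mathrm{sc}}_0$ is an odd homeomorphism, $\Psi_0$ is even and $C^1$, and critical points of $\Psi_0$ correspond to sign-changing critical points of $J_0$. Moreover $\Psi_0=(\frac12-\frac1{2p})\|m_0(\cdot)\|^2$, and $\Psi_0\to+\infty$ near $\partial\mathcal A$ as in Remark~\ref{rem2.3}. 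We also note that $\Psi_0$ is invariant under the $\mathbb Z^N$-action, which commutes with $\mathbf u\mapsto-\mathbf u$, by \eqref{eq5.5}.

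\textbf{Step 1: the minimax levels.} We define $\Sigma_k$ and $c_{0,k}$ as in \eqref{eq4.1}--\eqref{eq4.2}. The set $m_0(S_k)$ with the same $E_k$ shows $c_{0,k}<\infty$. Lemma~\ref{lem2.3} for $\varepsilon=0$ gives $c_{0,k}\ge (\frac12-\frac1{2p})2m\delta^2>0$.

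\textbf{Step 2: each $c_{0,k}$ is critical.} We argue by contradiction. First we replace the Palais--Smale argument by a splitting lemma for bounded $(PS)_c$ sequences of $J_0$ on $\mathcal N^{\mathrm{sc}}_0$, obtained from the profile decomposition in the proof of Lemma~\ref{Lem5.1}. If $c$ is not critical and $\mathbf u_n$ is a $(PS)_c$ sequence of $\Psi_0$, then $\mathbf v_n=m_0(\mathbf u_n)$ is a $(PS)_c$ sequence of $J_0$, by Lemma~\ref{lem2.5}(c) at $\varepsilon=0$. It is bounded by Lemma~\ref{lem2.1}. After $\mathbb Z^N$-translations, $\mathbf v_n$ converges weakly to a critical point; its sign components survive because of the lower bound in Lemma~\ref{lem2.3} together with local strong convergence. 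This gives a critical point at a level $\le c$. To close the contradiction we need more than this, so we use the standard route for translation-invariant problems. If the set $\mathcal K^{c+1}$ has finitely many orbits, Lemma~\ref{Lem5.1} and the discreteness of the orbits yield a uniform lower bound on $\|\Psi_0'\|$ away from small symmetric neighbourhoods of the critical orbits. This is the Szulkin--Weth discreteness argument \cite{SzulkinWeth2009}. We then build the odd deformation $\eta$ of \eqref{eq4.5} exactly as in Section~\ref{sec:existence}. The flow stays in $\mathcal A$ because $\Psi_0$ blows up at $\partial\mathcal A$.

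\textbf{Step 3: the alternative of infinitely many orbits.} If instead there are infinitely many distinct critical orbits, then there are infinitely many sign-changing solutions directly. Their energies are unbounded, since Lemma~\ref{Lem5.1} implies that each set $\mathcal K^a$ contains only finitely many orbits at isolated levels. Selecting a strictly increasing subsequence of these levels gives the claim.

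\textbf{Step 4: unboundedness.} In the finitely-many-orbits case, unboundedness of $c_{0,k}$ follows from the genus argument at the end of Section~\ref{sec:existence}. The neighbourhood $\mathcal O$ of $\mathcal K$ is now taken to be a union of small balls around finitely many orbits. Its genus is $1$, since the balls can be chosen disjoint from their negatives, which uses $0\notin\mathcal A$. The argument then goes through unchanged. Strict monotonicity after relabeling is as before.

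\textbf{Main obstacle.} The hardest step is the deformation in Step 2 under only compactness modulo $\mathbb Z^N$. The key estimate, in the style of Szulkin--Weth, is that two $(PS)$ sequences converging to different orbits stay a positive distance apart. We would try to derive this from the profile decomposition \eqref{eq5.2}--\eqref{eq5.3} combined with the energy gap: every nonzero critical point has $J_0\ge\alpha$.
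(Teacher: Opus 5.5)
The gap is in Steps 2--4, which is exactly the compactness problem you flag, and the Szulkin--Weth patch does not close it.

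$J_0$ and $\Psi_0$ are invariant under \emph{all} shifts $\mathbf u\mapsto\mathbf u(\cdot-\xi)$, $\xi\in\R^N$; the proof of \eqref{eq5.5} never uses $\xi\in\Z^N$. A nonzero element of $H$ is not invariant under any nonzero shift. So one critical point already produces the continuum $\{\mathbf u(\cdot-\xi):\xi\in[0,1)^N\}$ of critical points, lying in pairwise distinct $\Z^N$-orbits. Hence the ``finitely many orbits'' case of Steps 2 and 4 never occurs once a critical point exists, and everything rests on Step 3.

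In Step 3, the inference ``Lemma~\ref{Lem5.1} implies that each $\mathcal K^a$ contains only finitely many orbits at isolated levels'' does not follow. Compactness of $\mathcal K^a/\Z^N$ gives no finiteness. Infinitely many orbits may all be translates of one solution at one level, which gives neither unbounded energies nor $J_0(\mathbf U^{(k)})=c_{0,k}$.

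Two further claims also fail:
\begin{itemize}
\item Szulkin--Weth's discreteness of $(PS)$ sequences is used to control individual flow lines. It does not give a lower bound on $\|\Psi_0'\|$ away from neighbourhoods of $\mathcal K$.
\item Your assertion that the sign components survive in the weak limit fails. A single translation cannot follow sign pieces that drift apart in different directions, and the bound of Lemma~\ref{lem2.3} is not localized.
\end{itemize}

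The obstruction is genuine: with $c_{0,k}$ defined over all of $\mathcal A\subset H$, already $c_{0,1}$ is not a critical value. Let $c_i$ be the ground-state level of $-\Delta w+w=\mu_iQ(-y_i)|w|^{2p-2}w$.
\begin{itemize}
\item \emph{Lower bound on solutions.} For $\mathbf v\in\mathcal N^{\mathrm{sc}}_0$ one has $J_0(\mathbf v)=(\frac12-\frac1{2p})\sum_{i,\pm}\|v_i^\pm\|_{H^1}^2$. Since $\lambda_{ij}<0$, also $\|v_i^\pm\|_{H^1}^2\le\mu_iQ(-y_i)\int|v_i^\pm|^{2p}\,dx$. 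Scaling $v_i^\pm$ down onto the scalar Nehari manifold gives $(\frac12-\frac1{2p})\|v_i^\pm\|_{H^1}^2\ge c_i$. The inequality is strict, because $v_i^\pm$ vanishes on a set of positive measure and so is not a ground state. Hence $J_0(\mathbf U)>2\sum_ic_i$ for every fully sign-changing solution $\mathbf U$.
\item \emph{Upper bound on $c_{0,1}$.} Given $\delta>0$, take $u_i=\phi_i(\cdot-a_i)-\phi_i(\cdot-b_i)$ with $0\le\phi_i\in C_c^\infty(\R^N)$ close to the ground state and all $2m$ supports pairwise disjoint. Every coupling term vanishes, so $\Psi_0(\mathbf u)\le 2\sum_ic_i+\delta$. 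Since $\{\pm\mathbf u\}\in\Sigma_1$, this gives $c_{0,1}\le2\sum_ic_i$.
\end{itemize}
So no fully sign-changing solution has energy $c_{0,1}$. The same configurations built from the actual ground states, with centres pushed apart, are $(PS)$ sequences of $J_0$ in $\mathcal A$ whose translated weak limits never lie in $\mathcal A$.

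The paper's proof passes over this same point in \eqref{eq5.7}. It asserts $k_n*\mathbf v_n\to\mathbf V$ in $H$ by a ``standard translation argument'', which is exactly the claim you rightly distrusted, so following it would not help.

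What is missing is a real source of compactness. One option is the radial subspace when $N\ge2$: Strauss' compact embedding gives $(PS)$, the argument of Section~\ref{sec:existence} applies verbatim, and Palais' symmetric criticality returns solutions of \eqref{eq1.2}. The levels $c_{0,k}$ would then have to be redefined as the radial genus levels.
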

\begin{proof}
	For $k\in\mathbb N^{*}$, define
	\[
	c_{0,k}
	:=\inf_{A\in\Sigma_{k}}\ \sup_{\mathbf u\in A}\Psi_{0}(\mathbf u),
	\]
	where $\Sigma_{k}$ is given in \eqref{eq4.1}. Then
\[
	0<c_{0,1}\le c_{0,2}\le \cdots \le c_{0,k}\le\cdots,
	\]
since 
	\begin{equation}\label{eq5.6}
		\Psi_0(\mathbf u)
		=J_0\bigl(m_0(\mathbf u)\bigr)
		= \Bigl(\frac12-\frac1{2p}\Bigr)\,\bigl\|m_0(\mathbf u)\bigr\|^2>0,
		~\forall\,\mathbf u\in\mathcal A,
	\end{equation}
	and $\Sigma_{k+1}\subset\Sigma_{k}$.
\par
Next, similar to \cite{SzulkinWeth2009} and \cite{Willem1996}, we show that $c_{0,k}$ is a critical value of $\Psi_0$ via an equivariant deformation. Fix $k\in\mathbb N^{*}$,
assume by contradiction that $c_{0,k}$ is not a critical value of $\Psi_0$.
	Then there exists $\delta>0$ such that $\Psi_0$ has no critical values in
$[c_{0,k}-\delta,\ c_{0,k}+\delta]$. 
	Set
	\[
	 \mathcal{S}=\Psi_0^{-1}([c_{0,k}-2\delta,c_{0,k}+2\delta]).
	\]
Then $\mathcal{S}$ contains no critical points of $\Psi_0$.
	We claim that there exists $\sigma_\rho>0$ such that
	\begin{equation}\label{eq5.7}
		\|\Psi_0'(\mathbf u)\|\ge \sigma_\rho~\text{for all}~\mathbf u\in \mathcal{S}.
	\end{equation}
	Indeed, argue by contradiction. Then there exists a sequence $\{\mathbf u_n\}\subset S$ such that
	\[
	\|\Psi_0'(\mathbf u_n)\|\to0,~
	c_{0,k}-2\delta\le \Psi_0(\mathbf u_n)\le c_{0,k}+2\delta~\text{for all}~n.
	\]
	Set $\mathbf v_n:=m_0(\mathbf u_n)\in\mathcal N_0^{\mathrm{sc}}$.
	Then
	\[
	J_0(\mathbf v_n)=\Psi_0(\mathbf u_n)\in[c_{0,k}-2\delta,\ c_{0,k}+2\delta],
	\]
	so $\{J_0(\mathbf v_n)\}$ is bounded and $J_0^\prime(\mathbf v_n)\to 0$.
	Hence $\{\mathbf v_n\}\subset \mathcal N_0^{\mathrm{sc}}$ is a $(PS)_c$ sequence of $J_0$ for some $c\in[c_{0,k}-2\delta,\ c_{0,k}+2\delta]$. Similar to \eqref{eq5.1}, $\{\mathbf v_n\}$ is bounded. Using a standard translation argument (\cite{Willem1996}), we find a sequence
	$\{k_n\}\subset\mathbb Z^N$ and a critical point $\mathbf V$ of $J_0$ such that
	\[
	k_n*\mathbf v_n\to\mathbf V~\text{in}~H.
	\]
Jointly with \eqref{eq5.5}, we have
	\[
	m_0(k_n*\mathbf u_n)=k_n*m_0(\mathbf u_n)=k_n*\mathbf v_n\to\mathbf V ~\text{in}~H.
	\]
	Since $m_0:\mathcal A\to\mathcal N_0^{\mathrm{sc}}$ is a homeomorphism, it follows that
	\[
	k_n*\mathbf u_n\to m_0^{-1}(\mathbf V)~\text{in }H.
	\]
It follows from the continuities of $\Psi_0$ and $\Psi_0^\prime$ that
	\[
	\Psi_0\bigl(m_0^{-1}(\mathbf V)\bigr)=\lim_{n\to\infty}\Psi_0(k_n*\mathbf u_n)
	=\lim_{n\to\infty}\Psi_0(\mathbf u_n)\in[c_{0,k}-2\delta,\ c_{0,k}+2\delta],
	\]
	and $\Psi_0^\prime\bigl(m_0^{-1}(\mathbf V)\bigr)=\mathbf 0$, contradicts the fact that  $\mathcal{S}$ contains no critical points of $\Psi_0$.
This proves \eqref{eq5.7}.
\par
In a standard way (\cite{SzulkinWeth2009},\cite{Willem1996}), one can construct a locally Lipschitz continuous, even and $\mathbb{Z}^N$-equivariant pseudo-gradient vector field $V: \mathcal{A} \to H$ satisfying
 \begin{equation*}
   \langle \Psi_0(\mathbf{u}),V(\mathbf{v})\rangle \ge \frac{1}{2} \|\Psi_0^\prime(\mathbf{u})\|_{H^\prime}^2,~
   \|V(\mathbf{u})\| \le 2 \|\Psi_0^\prime(\mathbf{u})\|_{H^\prime},~\mathbf{u}\in \mathcal{S},
 \end{equation*}
$ V(-\mathbf{u}) = -V(\mathbf{u})$ for $\mathbf{u}\in \mathcal{A}$, $V(\mathbf{u})=0$ for $\mathbf{u}\in\mathcal A\setminus \mathcal{S}$.
Repeat the arguments in Section \ref{sec:existence}, one can construct a 
deformation $\varphi:\mathcal{A}\to \mathcal{A}$ satisfies
\begin{align*}
	&\text{(a)}~\varphi(\mathbf{u})=\mathbf{u}~\text{if}~
	\mathbf{u}\notin \Psi_{0}^{-1}\bigl([c_{0,k}-2\delta,\ c_{0,k}+2\delta]\bigr),\\
	&\text{(b)}~\varphi\bigl(\{\mathbf{u}\in \mathcal{A}:\Psi_{0}(\mathbf{u})\le c_{0,k}+\delta\}\bigr)
	\subset \{\mathbf{u}\in \mathcal{A}:\Psi_{0}(\mathbf{u})\le c_{0,k}-\delta\},\\
	&\text{(c)}~\varphi~\text{is}~\mathbb{Z}^N\text{-equivariant, continuous and odd}.
\end{align*}
\par
By the definition of $c_{0,k}$, choose $A\in\Sigma_k$ such that
$\sup_{\mathbf{u}\in A}\Psi_{0}(\mathbf{u})\le c_{0,k}+\delta$.
Then by (b),
\[
\sup_{\mathbf{u}\in \varphi(A)}\Psi_{0}(\mathbf{u})\le c_{0,k}-\delta.
\]
By (c), the set $\varphi(A)$ is compact and symmetric, and by the monotonicity
of the genus under odd maps we have
$\gamma(\varphi(A))\ge \gamma(A)\ge k$, so $\varphi(A)\in\Sigma_k$.
Therefore,
\[
c_{0,k}\le \sup_{\mathbf{u}\in \varphi(A)}\Psi_{0}(\mathbf{u})\le c_{0,k}-\delta,
\]
a contradiction. Hence $c_{0,k}$ must be a critical value of $\Psi_0$.
\par	
	Thus, for every $k\in\mathbb N^{*}$, there exists $\mathbf u^{(k)}_{0}\in\mathcal A$ such that
	\[
	\Psi_0^\prime(\mathbf u^{(k)}_{0})=\mathbf0
	~\text{and}~
	\Psi_0(\mathbf u^{(k)}_{0})=c_{0,k}.
	\]
\par
	Set
	\[
	\mathbf U^{(k)}:=m_0\bigl(\mathbf u^{(k)}_{0}\bigr)\in \mathcal N^{\mathrm{sc}}_0.
	\]
Observe that critical points of $\Psi_0$ correspond to critical points of $J_0$ on $\mathcal N^{\mathrm{sc}}_0$, hence
	\[
	J_0^\prime\bigl(\mathbf U^{(k)}\bigr)=\mathbf0~\text{in}~H^\prime.
	\]
	Therefore, $\mathbf U^{(k)}$ is a weak solution of the limit system \eqref{eq1.2}. Moreover, because $\mathbf u^{(k)}_{0}\in\mathcal A$, we have $(u^{(k)}_{0,i})^\pm\neq 0$ for $i=1,\dots,m$.
	Since $m_0$ is defined by independent positive scalings on $u_i^{+}$ and $u_i^{-}$, the same property holds for
	$\mathbf U^{(k)}$, and hence $\mathbf U^{(k)}$ is fully sign-changing and satisfies
	\[
	J_0\bigl(\mathbf U^{(k)}\bigr)
	=\Psi_0\bigl(\mathbf u^{(k)}_{0}\bigr)
	=c_{0,k}.
	\]
\par
Finally, we show that $\Psi_0$ has a sequence of pairwise distinct critical values.
For the case that $c_{0,k}\to +\infty$ as $k\to\infty$, after relabeling, we may assume the energies are strictly increasing and the conclusion holds.
\par
So we only consider the case that $\sup_{k\ge1}c_{0,k}\le C$ for some $C>0$.
	Then for every $k$ we can choose $A_k\in\Sigma_k$ such that
	\[
	\sup_{\mathbf u\in A_k}\Psi_0(\mathbf u)\le C+1.
	\]
	Hence $A_k\subset \Psi_0^{\,C+1}:=\{\mathbf u\in\mathcal A:\ \Psi_0(\mathbf u)\le C+1\}$ and
	\[
	\gamma(\Psi_0^{\,C+1})\ge \gamma(A_k)\ge k~\text{for all}~k,
	\]
which implies $\gamma(\Psi_0^{\,C+1})=+\infty$.	Let
	\[
	\mathcal{K}^{C+1}
	:=\Bigl\{\mathbf u\in\mathcal A:\ \Psi_0'(\mathbf u)=0,\ \Psi_0(\mathbf u)\le C+1\Bigr\}.
	\]
	By Lemma~\ref{Lem5.1}, the set of critical orbits $\mathcal{K}^{C+1}/\mathbb Z^N$ is compact.
	Fix $\rho>0$ and set
	\[
	\mathcal{U}_\rho:=U_\rho(\mathcal{K}^{C+1})
	=\bigl\{\mathbf u\in\mathcal A:\ \operatorname{dist}_H(\mathbf u,\mathcal{K}^{C+1})<\rho\bigr\},~
	\mathcal{U}_{2\rho}:=U_{2\rho}(\mathcal{K}^{C+1}).
	\]
	Since $\Psi_0$ is even and $\mathbb Z^N$--invariant on $\mathcal A$ and the action of $\mathbb Z^N$ on $H$ is isometric,
	we may choose $\mathcal{U}_\rho$ and $\mathcal{U}_{2\rho}$ symmetric and $\mathbb Z^N$--invariant. Define
	\[
	\Sigma:=\Psi_0^{\,C+1}\setminus \mathcal{U}_{2\rho}.
	\]
	By definition, $\Sigma$ contains no critical points of $\Psi_0$. Similar to \eqref{eq5.7}, there exists $\sigma_\rho>0$ such that
	\begin{equation}\label{eq5.8}
		\|\Psi_0^\prime(\mathbf u)\|\ge \sigma_\rho~\text{for all}~\mathbf u\in\Sigma.
	\end{equation}
	Since $\Psi_0$ is even and $\mathbb Z^N$--invariant on $\mathcal A$, we can choose a locally Lipschitz pseudo-gradient vector field
	$V:\mathcal A\setminus U\to H$ which is odd and $\mathbb Z^N$--equivariant and satisfies
	\[
	\langle \Psi_0^\prime(\mathbf u),V(\mathbf u)\rangle \ge \frac12\|\Psi_0^\prime(\mathbf u)\|^2,~
	\|V(\mathbf u)\|\le 2\|\Psi_0'(\mathbf u)\|
	~\text{for all}~\mathbf u\in\mathcal A\setminus \mathcal{U}_\rho.
	\]
	Let $\chi:\mathcal A\to[0,1]$ be an even, $\mathbb Z^N$--invariant Lipschitz cut-off function such that
	\[
	\chi\equiv 0~\text{on}~\mathcal{U}_{\rho},
	~\chi\equiv 1\ \text{on}~\Sigma.
	\]
Consider the initial value problem
\begin{equation*}
\begin{cases}
\displaystyle \frac{d}{dt} \eta(t, \mathbf{u}) = -\chi(\eta(t, \mathbf{u}))V(\eta(t, \mathbf{u})), \\[6pt]
\eta(0, \mathbf{u}) = \mathbf{u}_0\in\Psi_0^{\,C+1}.
\end{cases}
\end{equation*}	
	The vector field $\mathbf u\mapsto -\chi(\mathbf u)V(\mathbf u)$ is locally Lipschitz on $\mathcal A$, odd and $\mathbb Z^N$--equivariant,
	and it vanishes on $\mathcal{U}_{\rho}$. Hence the corresponding flow $\varphi(t,\mathbf u_0)$ is globally defined,
	odd and $\mathbb Z^N$--equivariant, and satisfies $\varphi(t,\mathbf u_0)=\mathbf u_0$ for all $\mathbf u_0\in \mathcal{U}_{\rho}$ and $t\ge0$.
	Moreover, for $\mathbf u(t):=\varphi(t,\mathbf u_0)$ we have
	\[
	\frac{d}{dt}\Psi_0(\mathbf u(t))
	=\langle \Psi_0'(\mathbf u(t)),\dot{\mathbf u}(t)\rangle
	=-\chi(\mathbf u(t))\langle \Psi_0'(\mathbf u(t)),V(\mathbf u(t))\rangle
	\le -\frac12\,\chi(\mathbf u(t))\|\Psi_0'(\mathbf u(t))\|^2\le 0.
	\]
	In particular, if $\mathbf u(t)\in\Sigma$, then $\chi(\mathbf u(t))=1$ and by \eqref{eq5.8},
	\[
	\frac{d}{dt}\Psi_0(\mathbf u(t))\le -\frac12\sigma_\rho^2,
	\]
and hence
\begin{equation*}
0\le \Psi_0(\mathbf{u}(t))\le C+1-\frac{1}{2}\sigma_\rho^2 t,
\end{equation*}
which implies that $t\le T_\rho:=\frac{2(C+1)}{\sigma_\rho^2}$.
	Since $\Psi_0\ge0$ on $\mathcal A$ and $\Psi_0(\mathbf u_0)\le C+1$, this implies that the trajectory cannot remain in $\Sigma$
	for all $t\ge0$. More precisely, it must enter $\mathcal{U}_{2\rho}$ no later than $T_\rho$.
	Define the deformation $\eta_\rho:\Psi_0^{\,C+1}\to\Psi_0^{\,C+1}$ by
	\[
	\eta_\rho(\mathbf u):=\varphi(T_\rho,\mathbf u).
	\]
	Then $\eta_\rho$ is continuous, odd and $\mathbb Z^N$--equivariant, $\eta_\rho(\mathbf u)=\mathbf u$ for all $\mathbf u\in \mathcal{U}_\rho$,
	and $\Psi_0(\eta_\rho(\mathbf u))\le \Psi_0(\mathbf u)$ for all $\mathbf u\in\Psi_0^{\,C+1}$.
	Moreover, by the definition of $T_\rho$, we have
	\begin{equation}\label{eq5.9}
		\eta_\rho\bigl(\Psi_0^{\,C+1}\bigr)\subset \mathcal{U}_{2\rho}.
	\end{equation}
	By monotonicity of the Krasnosel'skii genus under odd continuous maps, \eqref{eq5.9} yields
	\[
	\gamma\bigl(\Psi_0^{\,C+1}\bigr)\le \gamma\bigl(\mathcal{U}_{2\rho}\bigr).
	\]
	
	It remains to show that $\gamma(\mathcal{U}_{2\rho})<\infty$ for $\rho>0$ chosen sufficiently small.
	First, for $\rho>0$ small we have $\mathcal{U}_{2\rho}\subset\mathcal A$ and $0\notin \mathcal{U}_{2\rho}$.
	Moreover, since $\mathcal{K}^{C+1}/\mathbb Z^N$ is compact, the same holds for $\mathcal{U}_{2\rho}/\mathbb Z^N$.
	Observe that the map $\mathbf u\mapsto \|\mathbf u\|_H$ is continuous and $\mathbb Z^N$-invariant, hence it induces a continuous function on the compact set $\mathcal{U}_{2\rho}/\mathbb Z^N$ and attains its positive because $0\notin \mathcal{U}_{2\rho}$. Thus there exists $\delta_0>0$ such that
	\[
	\|\mathbf u\|_H \ge \delta_0~\text{for all}~\mathbf u\in \mathcal{U}_{2\rho}.
	\]
By the compactness of $\mathcal U_{2\rho}/\mathbb Z^N$, we may choose $\ell\in\mathbb N$,
	elements $\mathbf u^1,\dots,\mathbf u^\ell\in\mathcal U_{2\rho}$ and a radius
	$r\in(0,\delta_0/2)$ such that
	\begin{equation}\label{eq5.10}
		\mathcal U_{2\rho}\subset \bigcup_{j=1}^\ell\ \bigcup_{k\in\mathbb Z^N} B_r(k*\mathbf u^j).
	\end{equation}
	For each $j=1,\dots,\ell$, set $E_j:=\mathrm{span}\{\mathbf u^j\}$ and let $P_{E_j}:H\to E_j$ be the
	orthogonal projection. If $\mathbf v\in B_r(\mathbf u^j)$, then
	\[
	\|P_{E_j}\mathbf v\|_{H}\ge \|\mathbf u^j\|_{H}-\|\mathbf v-\mathbf u^j\|_{H}
	\ge \delta_0-r>0,
	\]
	hence $P_{E_j}(\mathbf v)\neq 0$. Moreover, since $\|\mathbf u^j\|_H\ge \delta_0$ and $r<\delta_0/2$, we have
	\[
	\langle \mathbf v,\mathbf u^j\rangle_H
	=\|\mathbf u^j\|_H^2+\langle \mathbf v-\mathbf u^j,\mathbf u^j\rangle_H
	\ge \|\mathbf u^j\|_H^2-\|\mathbf v-\mathbf u^j\|_H\,\|\mathbf u^j\|_H
	> \|\mathbf u^j\|_H(\|\mathbf u^j\|_H-r)>0,
	\]
	and therefore
	\[
	P_{E_j}(\mathbf v)=\frac{\langle \mathbf v,\mathbf u^j\rangle_H}{\|\mathbf u^j\|_H^2}\,\mathbf u^j
	~\text{is a positive multiple of }\mathbf u^j .
	\]
	
	To construct a global odd continuous map, consider the $\mathbb Z^N$-invariant open cover
	$\{B_r(k*\mathbf u^j)\}_{1\le j\le \ell,\ k\in\mathbb Z^N}$ of $\mathcal{U}_{2\rho}$.
	Since $H$ is a metric space, it is paracompact; thus there exists a locally finite
	$\mathbb Z^N$-invariant partition of unity $\{\vartheta_{j,k}\}_{j,k}$ subordinate to this cover.
	Replacing $\vartheta_{j,k}$ by $\frac12(\vartheta_{j,k}(\cdot)+\vartheta_{j,k}(-\cdot))$, we may assume that
	each $\vartheta_{j,k}$ is even.
	
	Let $E:=E_1\oplus\cdots\oplus E_\ell$ and define $F:\mathcal{U}_{2\rho}\to E$ by
	\[
	F(\mathbf u)
	:=\Bigl(\sum_{k\in\mathbb Z^N}\vartheta_{1,k}(\mathbf u)\,P_{E_1}(k^{-1}*\mathbf u),\ \dots,\
	\sum_{k\in\mathbb Z^N}\vartheta_{\ell,k}(\mathbf u)\,P_{E_\ell}(k^{-1}*\mathbf u)\Bigr).
	\]
	The sums are finite for each fixed $\mathbf u$ by local finiteness, hence $F$ is well-defined and continuous.
	Moreover, $F$ is odd because each $\vartheta_{j,k}$ is even and each $P_{E_j}$ is linear.
	\par
	Finally, we show that $F(\mathbf u)\neq 0$ for all $\mathbf u\in\mathcal{U}_{2\rho}$.
	Fix $\mathbf u\in\mathcal{U}_{2\rho}$, by \eqref{eq5.10}, there exist $j\in\{1,\dots,\ell\}$ and $k_0\in\mathbb Z^N$
	such that $\mathbf u\in B_r(k_0*\mathbf u^j)$, i.e. $k_0^{-1}*\mathbf u\in B_r(\mathbf u^j)$.
	Since $\{\vartheta_{j,k}\}_k$ is a partition of unity subordinate to $\{B_r(k*\mathbf u^j)\}_k$, we have
	$\vartheta_{j,k_0}(\mathbf u)>0$, and for every $k$ with $\vartheta_{j,k}(\mathbf u)>0$ we also have
	$k^{-1}*\mathbf u\in B_r(\mathbf u^j)$.
	Hence each vector $P_{E_j}(k^{-1}*\mathbf u)$ appearing in the $j$-th component is a positive multiple of $\mathbf u^j$,
	and at least one of them is nonzero. Since all coefficients $\vartheta_{j,k}(\mathbf u)$ are nonnegative, the $j$-th component of
	$F(\mathbf u)$ is a positive multiple of $\mathbf u^j$. Therefore $F(\mathbf u)\neq 0$.
\par	
	Therefore, $F:\mathcal{U}_{2\rho}\to E\setminus\{0\}$ is odd and continuous, and hence
	\[
	\gamma(\mathcal{U}_{2\rho})\le \gamma(E\setminus\{0\})=\dim E<\infty.
	\]
	Consequently, $\gamma(\Psi_0^{\,C+1})<\infty$, which contradicts $\gamma(\Psi_0^{\,C+1})=+\infty$.
	This proves that $c_{0,k}\to +\infty$ as $k\to\infty$.
	
	\medskip
	Combining the above, we obtain a sequence of fully sign-changing solutions
	$\{\mathbf U^{(k)}\}\subset H$ of \eqref{eq1.2} such that
	\[
	J_0\bigl(\mathbf U^{(k)}\bigr)=c_{0,k}
	~\text{and}~
	c_{0,k}\to+\infty \ \text{as }k\to\infty.
	\]
	Since $(c_{0,k})$ is nondecreasing and unbounded, we can extract a strictly increasing subsequence.
	Thus, relabeling if necessary, we may assume
	\[
	0<c_{0,1}<c_{0,2}<\cdots<c_{0,k}<\cdots.
	\]
	This completes the proof.
\end{proof}
\par
From now on, we fix $k\in\mathbb{N}^*$. Let
$\{\varepsilon_n\}\subset(0,+\infty)$ be a sequence with $\varepsilon_n\downarrow0$, 
$\{\mathbf{u}^{(k)}_{\varepsilon_n}\bigr\}$ be the
sequence of sign-changing solutions given by Theorem~\ref{thm1.1} with $\Psi_{\varepsilon_n}(\mathbf{u}^{(k)}_{\varepsilon_n})=c_{\varepsilon_n,k}$.
With the critical values $\{c_{0,k}\}$ and the corresponding sign-changing critical points $\{\mathbf U^{(k)}\}$ at hand, we now prove Theorem~\ref{thm1.2} by a
concentration compactness analysis for the sequence $\{\mathbf{u}^{(k)}_{\varepsilon_n}\}$. To simplify the notations, we omit superscript $(k)$.
\begin{Lem} \label{Lem5.3}
For each $i=1,\dots,m$, up to a subsequence of $\{\varepsilon_n\}$, there exist $R>0,\ \eta>0$ and $\{x_{i,\varepsilon_{n}}\}\subset\mathbb{R}^N$ such that
\[
\int_{B_R(x_{i,\varepsilon_{n}})} |u_{i,\varepsilon_n}|^{2p}dx \ge \eta,
~\forall n.
\]
\end{Lem}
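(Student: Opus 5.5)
We argue by nonvanishing: a uniform $H^1$ bound plus a uniform lower bound on the local $L^{2p}$ mass of each component $u_{i,\varepsilon_n}$, and then Lions' vanishing lemma.

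\emph{Step 1: uniform bounds.} Write $\mathbf u_n:=\mathbf u^{(k)}_{\varepsilon_n}\in\mathcal N^{\mathrm{sc}}_{\varepsilon_n}$. Lemma~\ref{lem2.5}(d) gives $J_{\varepsilon_n}(\mathbf u_n)=(\frac12-\frac1{2p})\|\mathbf u_n\|^2=c_{\varepsilon_n,k}$, so it suffices to bound $c_{\varepsilon_n,k}$ independently of $n$. The domain $\mathcal A$ and the class $\Sigma_k$ do not depend on $\varepsilon$. Fix one set, for instance $S_k$ from Section~\ref{sec:existence}, but now choose the functions $\boldsymbol\varphi_j$ with supports in a fixed ball $B_{\rho}(0)$ and with $k$ disjoint pieces there. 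For $\mathbf u\in S_k$ we have $\Psi_{\varepsilon}(\mathbf u)=\max_{t_i^\pm>0}J_\varepsilon(\sum t_i^\pm u_i^\pm)$, since the fiber maps \eqref{eq2.4} are concave and have a unique maximizer. We have $J_\varepsilon\le \frac12\|\cdot\|^2-\frac1{2p}\sum_i\mu_i\int Q_\varepsilon|\cdot|^{2p}-\frac{1}{2p}\sum\lambda_{ij}\int\ldots$, and as $\varepsilon\to0$, $Q(\varepsilon x-y_i)\to Q(-y_i)=\|Q\|_\infty>0$ uniformly on $B_\rho(0)$. Hence on the compact set $S_k$ the maximizers $t_i^\pm$ stay bounded uniformly in $\varepsilon$ small, and $\sup_{S_k}\Psi_{\varepsilon_n}\le C_k$. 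This gives $c_{\varepsilon_n,k}\le C_k$ and $\|\mathbf u_n\|\le C$.

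\emph{Step 2: lower bound on $L^{2p}$ mass.} Lemma~\ref{lem2.3} gives $\|u_{i,n}^\pm\|_{H^1}\ge\delta$ with $\delta$ independent of $\varepsilon$. From \eqref{eq2.3} and $\lambda_{ij}<0$,
\[
\delta^2\le\|u_{i,n}^+\|_{H^1}^2\le \mu_i\|Q\|_\infty\int_{\mathbb R^N}|u_{i,n}^+|^{2p}\,dx\le \mu_i\|Q\|_\infty\int_{\mathbb R^N}|u_{i,n}|^{2p}\,dx .
\]
So $\inf_n\|u_{i,n}\|_{L^{2p}}^{2p}\ge\eta_0>0$ for each $i$.

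\emph{Step 3: nonvanishing.} Suppose that for some $i$ and every $R>0$ we had $\sup_{x}\int_{B_R(x)}|u_{i,n}|^{2p}\to0$ along every subsequence. Since $2<2p<2^*$ (or $2p>2$ when $N=1,2$), the $L^{2p}$ and $L^2$ local smallness are interchangeable via interpolation with the $H^1$ bound. Lions' lemma, applied to the bounded sequence $\{u_{i,n}\}$ in $H^1$, would then give $u_{i,n}\to0$ in $L^{2p}(\mathbb R^N)$, contradicting Step~2. Therefore, after passing to a subsequence, for each $i$ there exist $R_i,\eta_i$ and points $x_{i,\varepsilon_n}$ with $\int_{B_{R_i}(x_{i,\varepsilon_n})}|u_{i,n}|^{2p}\ge\eta_i$. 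We take $R=\max R_i$ and $\eta=\min\eta_i$, successively extracting subsequences over $i=1,\dots,m$.

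The main obstacle is Step~1, the $\varepsilon$-uniform bound on $c_{\varepsilon,k}$. One must check that $S_k\subset\mathcal A$ works uniformly and that $Q_\varepsilon$ stays bounded below on the supports for small $\varepsilon$. This is exactly where the requirement that all supports lie near the origin, together with $Q(-y_i)>0$, is used. The remaining steps are routine.
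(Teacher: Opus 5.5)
Your proof is correct. Steps 2--3 are the paper's argument slightly rearranged. The paper assumes vanishing, gets $u_{i,\varepsilon_n}\to0$ in $L^{2p}$ from Lions' lemma, and then contradicts Lemma~\ref{lem2.3} through the Nehari identity for the $i$-th component, using $\lambda_{ij}<0$. You record the same inequality up front as the uniform bound $\|u_{i,n}\|_{L^{2p}}^{2p}\ge\delta^2/(\mu_i\|Q\|_\infty)$.

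The genuine difference is how you obtain the $H$-bound.
\begin{itemize}
\item The paper proves the full level convergence $c_{\varepsilon_n,k}\to c_{0,k}$ in \eqref{eq5.11} and then applies Lemma~\ref{lem2.1}. To do this it compares $\Psi_{\varepsilon_n}$ with the limit functional $\Psi_0$. That requires continuity of $(\varepsilon,\mathbf u)\mapsto m_\varepsilon(\mathbf u)$ up to $\varepsilon=0$, and uniform convergence $\Psi_{\varepsilon_n}\to\Psi_0$ first on an almost optimal compact set for $c_{0,k}$ and then, for the $\liminf$, on the sets $A_n$.
\item You only need $\sup_n c_{\varepsilon_n,k}<\infty$. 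You get it from one fixed compactly supported set $S_k\in\Sigma_k$, on which $Q(\varepsilon x-y_i)\ge\frac12\|Q\|_\infty$ for small $\varepsilon$.
\end{itemize}
Your route is more elementary and self-contained. It never touches $\varepsilon=0$. It also avoids the $\liminf$ half of the paper's argument, which asserts uniform convergence on sets that are merely bounded; that is delicate, since $Q_{\varepsilon_n}$ converges only locally uniformly. The paper's route does more than this lemma needs: it identifies the limit level $c_{0,k}$, which is reused in Theorem~\ref{thm1.2}(ii) and (iv).

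One justification in Step 1 should be replaced: the identity $\Psi_\varepsilon(\mathbf u)=\max_{t_i^\pm>0}J_\varepsilon(\sum t_i^\pm u_i^\pm)$. The paper does not provide it, for three reasons.
\begin{itemize}
\item \eqref{eq2.4} is the fiber in a single component with the other components frozen, not a joint fiber over all $2m$ scalings.
\item The $g_i^\sigma$ are not concave; they are convex near $t=0$.
\item A joint maximum need not even be finite when components overlap. The competitive term $-\frac{1}{2p}\sum_{i\neq j}\lambda_{ij}\int|t_iu_i|^p|t_ju_j|^p$ is positive and grows like $t^{2p}$, the same order as the focusing term.
\end{itemize}
You do not need this identity. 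By Lemma~\ref{lem2.5}(d), or simply by dropping the nonpositive $Q$-term in $J_\varepsilon$, $\Psi_\varepsilon$ is bounded on $S_k$ once the $t_i^\sigma$ are. By \eqref{eq2.6}, each $t_i^\sigma$ is bounded in terms of upper bounds on $a_i^\sigma$ and $c_{ij}^\sigma$ and a lower bound on $b_i^\sigma$. On $S_k$ these bounds hold:
\begin{itemize}
\item $a_i^\sigma\le1$ and $c_{ij}^\sigma\le C$;
\item $b_i^\sigma\ge\frac12\|Q\|_\infty\min_{S_k}\int|u_i^\sigma|^{2p}>0$, where the minimum is positive by compactness of $S_k$, continuity of $\mathbf u\mapsto u_i^\sigma$ into $L^{2p}$, and $S_k\subset\mathcal A$.
\end{itemize}
With that substitution Step 1 is complete.
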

\begin{proof}
Fix $i\in\{1,\dots,m\}$. To apply the concentration compactness argument, we first establish the boundedness of $\{\mathbf{u}_{\varepsilon_n}\}$ in $H$. 
We claim that
\begin{equation}\label{eq5.11}
	\lim_{n\to\infty} c_{\varepsilon_n,k}=c_{0,k}.
\end{equation}
\par
In fact, by the definition of $c_{0,k}$, for any $\delta>0$,
there exists a compact and symmetric set $A_\delta\subset\mathcal A$ such that $\gamma(A_\delta)\ge k$ and
\begin{equation}\label{eq5.12}
	\sup_{\mathbf u\in A_\delta}\Psi_0(\mathbf u)\le c_{0,k}+\delta.
\end{equation}
Moreover, since $\sup_{A_\delta}\Psi_0<\infty$, the coercivity identity \eqref{eq5.6}
implies that $A_\delta$ is bounded in $H$. 
By a standard implicit function argument,
the nodal projection $m_\varepsilon$ depends continuously on $(\varepsilon,\mathbf u)$. In particular, 
\begin{equation}\label{eq5.13}
	m_{\varepsilon_n}(\mathbf u)\to m_0(\mathbf u)~\text{in}~H,
	~\forall\,\mathbf u\in A_\delta.
\end{equation}
Furthermore, for every $R>0$ and $i\in\{1,\dots,m\}$,
\[
Q_{\varepsilon_n}(x-y_i)=Q(\varepsilon_n x-y_i)\to Q(-y_i)
~\text{uniformly for}~x\in B_R \text{ as }n\to\infty.
\]
Combining \eqref{eq5.13} with the Sobolev embedding $H\hookrightarrow L^{2p}(\mathbb{R}^N)^m$
and the uniform convergence of $Q(\varepsilon_n x-y_i)$ on bounded sets, we deduce
\begin{equation}\label{eq5.14}
\sup_{\mathbf u\in A_\delta}\Bigl|\Psi_{\varepsilon_n}(\mathbf u)-\Psi_0(\mathbf u)\Bigr|
=\sup_{\mathbf u\in A_\delta}\Bigl|J_{\varepsilon_n}\bigl(m_{\varepsilon_n}(\mathbf u)\bigr)
-J_0\bigl(m_0(\mathbf u)\bigr)\Bigr|\to0~\text{as}~n\to\infty.
\end{equation}
By \eqref{eq5.12}--\eqref{eq5.14},
we obtain for $n$ large
\[
c_{\varepsilon_n,k}\le \sup_{\mathbf u\in A_\delta}\Psi_{\varepsilon_n}(\mathbf u)
\le \sup_{\mathbf u\in A_\delta}\Psi_0(\mathbf u)+\delta
\le c_{0,k}+2\delta.
\]
Letting $n\to\infty$ and then $\delta\to 0^+$ gives
\begin{equation}\label{eq5.15}
	\limsup_{n\to\infty}c_{\varepsilon_n,k}\le c_{0,k}.
\end{equation}
\par
On the other hand, 
for any $n$, choose $A_n\subset\mathcal A$ such that
\[
\sup_{\mathbf u\in A_n}\Psi_{\varepsilon_n}(\mathbf u)\le c_{\varepsilon_n,k}+\frac1n.
\]
Then $A_n\subset\{\mathbf u\in\mathcal A:\ \Psi_{\varepsilon_n}(\mathbf u)\le c_{\varepsilon_n,k}+1\}$.
By \eqref{eq5.15}, $\{c_{\varepsilon_n,k}\}$ is bounded above, and hence there exists $M_k>0$ such that,
up to discarding finitely many indices,
\[
A_n\subset \{\mathbf u\in\mathcal A:\ \Psi_{\varepsilon_n}(\mathbf u)\le M_k\}~\text{for all large}~n.
\]
Lemma~\ref{lem2.5}(d) yields that $(A_n)$ is bounded in $H$ uniformly for $n$. Therefore, arguing exactly as in
\eqref{eq5.14}, we obtain
\[
\sup_{\mathbf u\in A_n}\bigl|\Psi_{\varepsilon_n}(\mathbf u)-\Psi_0(\mathbf u)\bigr|\to 0
~\text{as}~n\to\infty.
\]
Consequently,
\[
c_{0,k}
=\inf_{A\in\Sigma_k}\sup_{\mathbf u\in A}\Psi_0(\mathbf u)
\le\sup_{\mathbf u\in A_n}\Psi_0(\mathbf u)
\le \sup_{\mathbf u\in A_n}\Psi_{\varepsilon_n}(\mathbf u)+o_n(1)
\le c_{\varepsilon_n,k}+\frac1n+o_n(1).
\]
Letting $n\to\infty$ yields
\begin{equation}\label{eq5.16}
	\liminf_{n\to\infty}c_{\varepsilon_n,k}\ge c_{0,k}.
\end{equation}
Combining \eqref{eq5.15} with \eqref{eq5.16}, we obtain \eqref{eq5.11}. Combined with Lemma \ref{lem2.1}, $\{\mathbf{u}_{\varepsilon_n}\}$ is bounded in $H$.
\par
Assume by contradiction that the conclusion fails.  
Then for every $R>0$,
\[
\sup_{y\in\mathbb{R}^N} \int_{B_R(y)} |u_{i,\varepsilon_n}|^{2p} dx \to 0~\text{as}~n\to\infty.
\]
By the vanishing lemma (see \cite{Lions1984}, \cite{Willem1996}) we obtain
\begin{equation*}
  u_{i,\varepsilon_n} \to 0~\text{in}~L^{2p}(\mathbb{R}^N)~\text{as}~n\to\infty.
\end{equation*}
\par
Since $\mathbf{u}_{\varepsilon_n}$ is a critical point of $J_{\varepsilon_n}$, $\langle J_{\varepsilon_n}^\prime(\mathbf u_{\varepsilon_n}),(0,\dots,u_{i,\varepsilon_n},\dots,0)\rangle=0$, i.e.,
		\begin{equation*}
			\|u_{i,\varepsilon_n}\|_{H^1}^2
			= \mu_i\int_{\mathbb{R}^N}Q_\varepsilon\,|u_{i,\varepsilon}|^{2p}\,dx
			+ \sum_{j\neq i}\lambda_{ij}\int_{\mathbb{R}^N}|u_{i,\varepsilon}|^{p}|u_{j,\varepsilon}|^{p}\,dx .
		\end{equation*}
Note that the coupling term is nonpositive because of $\lambda_{ij}<0$, and $0\le Q_{\varepsilon_n}(x)\le\|Q\|_\infty$ a.e. in $\mathbb{R}^N$, so we have
\begin{equation}\label{eq5.17}
  \| u_{i,\varepsilon_n} \|_{H^1}^2 
\le \mu_i \| Q\|_\infty \| u_{i,\varepsilon_n} \|_{L^{2p}}^{2p} \to 0~\text{as}~n\to\infty,
\end{equation}
which contradicts Lemma \ref{lem2.3}.

Therefore, there exist a subsequence $\{\varepsilon_n\}$, constants $R_i,\eta_i>0$ and points $\{x_{i,\varepsilon_n}\}$ such that
\[
\int_{B_{R_i}(x_{i,\varepsilon_n})} |u_{i,\varepsilon_n}|^{2p} dx \ge \eta_i.
\]
Taking $R=\max\limits_{1\le i\le m} R_i$, $\eta=\min\limits_{1\le i\le m}\eta_i$ and passing to a further common subsequence yields the conclusion.
\end{proof}

We finally show that the concentration centers converge to the origin.

\begin{Lem}\label{Lem5.4}
For each $i = 1, \dots, m$, we have $\varepsilon_n x_{i,\epsilon_n} \to 0$ as $n \to \infty$.
\end{Lem}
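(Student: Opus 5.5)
The proof will go by contradiction in two steps. Step one shows that $\{\varepsilon_n x_{i,\varepsilon_n}\}$ is bounded. Step two shows that any limit point $z$ of $\varepsilon_n x_{i,\varepsilon_n}$ must be $0$. Throughout, $\{\mathbf u_{\varepsilon_n}\}$ is bounded in $H$, by \eqref{eq5.11} and Lemma~\ref{lem2.1}. Put $\widetilde{\mathbf u}_n:=\mathbf u_{\varepsilon_n}(\cdot+x_{i,\varepsilon_n})$ and pass to a subsequence with $\widetilde{\mathbf u}_n\rightharpoonup\widetilde{\mathbf u}$ in $H$ and strongly in $L^{2p}_{\rm loc}$. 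By Lemma~\ref{Lem5.3} we have $\int_{B_R}|\widetilde u_i|^{2p}\ge\eta$, so $\widetilde u_i\neq0$.

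\emph{Step 1 (boundedness).} Suppose $|\varepsilon_n x_{i,\varepsilon_n}|\to\infty$. Then for every $\varphi\in C_c^\infty(\mathbb R^N)$ and every $j$, the function $Q(\varepsilon_n(\cdot+x_{i,\varepsilon_n})-y_j)\varphi$ vanishes for $n$ large, because $\operatorname{supp}Q$ is bounded. This holds simultaneously for all components, since they share the same spatial translation. I would then argue exactly as in the proof of Lemma~\ref{lem3.1}. Testing $J'_{\varepsilon_n}(\mathbf u_{\varepsilon_n})=0$ with translated test functions and passing to the limit shows that $\widetilde{\mathbf u}$ solves the purely repulsive system \eqref{eq3.6}. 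Testing that system with $\widetilde{\mathbf u}$ gives $\|\widetilde{\mathbf u}\|^2\le0$, which contradicts $\widetilde u_i\neq0$.

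\emph{Step 2 (identifying the limit).} Now $\varepsilon_n x_{i,\varepsilon_n}\to z$ along a subsequence. Then $Q(\varepsilon_n(\cdot+x_{i,\varepsilon_n})-y_j)\to Q(z-y_j)$ locally uniformly, so $\widetilde{\mathbf u}$ is a nontrivial critical point of the autonomous functional $J_z$ with coefficients $\mu_jQ(z-y_j)\le\mu_j\|Q\|_\infty$. If $Q(z-y_j)=0$ for all $j$, we are back in Step 1, a contradiction. Otherwise I would compare energies. By weak lower semicontinuity and the identity $J=(\frac12-\frac1{2p})\|\cdot\|^2$ at critical points, we get $J_z(\widetilde{\mathbf u})\le\liminf c_{\varepsilon_n,k}=c_{0,k}$. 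On the other side, I would show that the genus levels $c_{z,k}$ of $J_z$ satisfy $c_{z,k}\ge c_{0,k}$. The reason is that $J_z\ge J_0$ pointwise and the fiber maxima are monotone in the coefficients; the fiber maximum of $g_i^\sigma$ is decreasing in $b_i^\sigma$. To obtain a contradiction this inequality has to be strict whenever $Q(z-y_j)<\|Q\|_\infty$ for some $j$. Moreover, the profile must carry the full energy at level $k$. For this I would use a Br\'ezis--Lieb splitting of $\mathbf u_{\varepsilon_n}$ into $\widetilde{\mathbf u}(\cdot-x_{i,\varepsilon_n})$ plus a remainder that contributes nonnegative energy; the proof of Lemma~\ref{lem2.1} gives nonnegativity on Nehari-type sets.

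\emph{Main obstacle.} The delicate point is twofold. First, one needs strictness, $c_{z,k}>c_{0,k}$, unless $z$ maximizes every $Q(\cdot-y_j)$ at once. By $(A_3)$, $Q(z-y_j)=\|Q\|_\infty$ forces $z-y_j\in\{-y_1,\dots,-y_m\}$, and since the $y_j$ are pairwise distinct, $z=0$ is the only point that works for all $j$. Second, one must rule out that the energy splits among several profiles, so that the single profile $\widetilde{\mathbf u}$ would not sit at the level $c_{0,k}$. For strictness I would first try a direct rescaling argument. Let $m_0$ denote the nodal projection onto $\mathcal N_0^{\mathrm{sc}}$. Applying $m_0$ to a critical point of $J_z$ decreases the energy by a definite amount, because some coefficient is strictly smaller, and this would push $c_{0,k}$ below its own value. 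This part I expect to require the most care, and the comparison of genus levels (rather than ground-state levels) might force a weaker conclusion.
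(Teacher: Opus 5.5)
Your Step~1 matches the paper's argument. Your bound $J_z(\widetilde{\mathbf u})=(\frac12-\frac1{2p})\|\widetilde{\mathbf u}\|^2\le\liminf_n c_{\varepsilon_n,k}=c_{0,k}$ is a cleaner substitute for \eqref{eq5.25}. The gap is in Step~2, and it has two parts.

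First, you never show that $\widetilde{\mathbf u}$ is fully sign-changing, or even that $\widetilde u_\ell\neq0$ for $\ell\neq i$. You only know $\widetilde u_i\neq0$, because $x_{i,\varepsilon_n}$ was chosen from the $i$-th component alone. Without this, $z\neq0$ gives you nothing:
\begin{itemize}
\item It only yields $Q(z-y_\ell)<\|Q\|_\infty$ for \emph{some} $\ell$. For instance, $z=y_i-y_j$ still gives $Q(z-y_i)=\|Q\|_\infty$.
\item If that $\widetilde u_\ell$ vanishes, then $J_z(\widetilde{\mathbf u})=J_0(\widetilde{\mathbf u})$ and there is no energy gain.
\item $m_0(\widetilde{\mathbf u})$ is not even defined unless $\widetilde{\mathbf u}\in\mathcal A$.
\end{itemize}
Separately, the fact that $z=0$ is the only common maximizer needs the finite-orbit argument behind \eqref{eq5.23}, not pairwise distinctness alone. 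The paper establishes $\mathbf U\in\mathcal A$ before comparing energies. Only then, for $k=1$, does it run essentially your rescaling argument, using $c_{0,1}=\inf_{\mathcal A}\Psi_0$.

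Second, for $k\ge2$ comparing genus levels cannot close the argument. Even granting $c_{z,k}>c_{0,k}$, a critical point of $J_z$ need not lie at or above $c_{z,k}$, so nothing contradicts $J_z(\widetilde{\mathbf u})\le c_{0,k}$. Likewise, a single point with $\Psi_0<c_{0,k}$ contradicts nothing, since $c_{0,k}$ is a minimax over genus-$k$ sets. You would need a whole genus-$k$ set on which $\Psi_0$ stays close to $J_0(m_0(\widetilde{\mathbf u}))$.

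The paper tries to build such a set from $t_0\mathbf U$ and $k-1$ far translates, but the construction does not give the stated bound. For $\mathbf w$ proportional to $t_0\mathbf U+\boldsymbol\psi_1$, the nodal projection brings both bumps back onto the constraint, so $\Psi_0(\mathbf w)\approx 2J_0(t_0\mathbf U)$.

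The paper's proof of $\mathbf U\in\mathcal A$ has a similar problem. It rests on a bound $|\tilde u_{j,n}(x)|\le Ce^{-\alpha|x|}$, uniform in $n$, around $x_{i,n}$. That bound already assumes all components concentrate at the same center, which is the point in question. This is not a technicality: $Q(\varepsilon x-y_\ell)=Q(-y_j)=\|Q\|_\infty$ also holds at $x=(y_\ell-y_j)/\varepsilon$. A component, or one sign part of it, sitting there is far from the others and pays no repulsive coupling cost.

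So your suspicion is well founded. The missing ingredient is common concentration of all components and sign parts around one center. Your plan does not supply it, and the paper does not supply it convincingly either.
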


\begin{proof}
By Lemma~\ref{Lem5.3}, there exist $R>0$, $\eta>0$ and $\varepsilon_0>0$
	such that for every $\varepsilon\in(0,\varepsilon_0)$ one can find points $x_{i,\varepsilon}\in\R^N$ satisfying
	\begin{equation}\label{eq5.18}
		\int_{B_R(x_{i,\varepsilon})}|u_{i,\varepsilon}|^{2p}\,dx\ge \eta,
		~ i=1,\dots,m.
	\end{equation}
	Fix $i\in\{1,\dots,m\}$ and let $\varepsilon_n\downarrow0$. Set $x_{i,n}:=x_{i,\varepsilon_n}$ and define
	\[
	\tilde u_{\ell,n}(x):=u_{\ell,\varepsilon_n}(x+x_{i,n}),~
	\tilde{\mathbf u}_n:=(\tilde u_{1,n},\dots,\tilde u_{m,n}) .
	\]
	Then \eqref{eq5.18} yields
	\[
	\int_{B_R(0)}|\tilde u_{i,n}|^{2p}\,dx
	=\int_{B_R(x_{i,n})}|u_{i,\varepsilon_n}|^{2p}\,dx\ge \eta,
	\]
	jointly with $\{\tilde{\mathbf u}_n\}$ is bounded in $H$, up to a subsequence,
	\[
	\tilde{\mathbf u}_n\rightharpoonup \mathbf U \ \text{in }H,
	~
	\tilde{\mathbf u}_n\to \mathbf U \ \text{in }L^{2p}_{\rm loc}(\R^N)^m,
	\]
	for some $\mathbf U=(U_1,\dots,U_m)\in H$ with $\mathbf{U}\neq \mathbf{0}$.
\par
We next verify that $\mathbf{U} \in \mathcal{A}$, i.e., 
$U_i^\pm \neq 0$ for each $i = 1, \dots, m$.

Recall that $\mathbf{u}_{\epsilon_n} \in \mathcal{N}_{\varepsilon_n}^{\mathrm{sc}} \subset \mathcal{A}$ 
for each $n$. By Lemma~\ref{lem2.3}, there exists $\delta > 0$ independent of $n$ such that
\[
\|u_{i,\varepsilon_n}^\pm\|_{H^1} \geq \delta, ~ i = 1, \dots, m,
\]
which implies 
\begin{equation}\label{eq5.19}
\|\tilde{u}_{i,n}^\pm\|_{H^1} = \|u_{i,\varepsilon_n}^\pm\|_{H^1} \geq \delta.
\end{equation}

Suppose to the contrary that there exist some $j \in \{1, \dots, m\}$ and sign 
$\sigma \in \{+, -\}$ such that $U_j^\sigma = 0$ in $H^1(\mathbb{R}^N)$, then $\tilde{u}_{j,n}^\sigma \rightharpoonup 0$ 
in $H^1(\mathbb{R}^N)$ and $\tilde{u}_{j,n}^\sigma \to 0$ in $L_{\mathrm{loc}}^{2p}(\mathbb{R}^N)$.

By \eqref{eq2.3} for $\mathbf{u}_{\varepsilon_n}$ and make a change of variables we obtain
\begin{equation}\label{eq5.20}
\begin{aligned}
\|\tilde{u}_{j,n}^\sigma\|_{H^1}^2 &= \mu_j \int_{\mathbb{R}^N} Q_{\varepsilon_n}(x + x_{i,n} - y_j) |\tilde{u}_{j,n}^\sigma|^{2p} \, dx 
+ \sum_{k \neq j} \lambda_{jk} \int_{\mathbb{R}^N} |\tilde{u}_{k,n}|^p 
|\tilde{u}_{j,n}^\sigma|^p \, dx\\
&\le \mu_j \int_{\mathbb{R}^N} Q_{\varepsilon_n}(x + x_{i,n} - y_j) |\tilde{u}_{j,n}^\sigma|^{2p} \, dx,
\end{aligned}
\end{equation}
since $\lambda_{jk} < 0$.
\par
For any $M>0$, it follows from $\tilde{u}_{j,n}^\sigma \to 0$ in $L^{2p}(B_M(0))$ and $|Q({\varepsilon_n}(x + x_{i,n} - y_j)| \leq \|Q\|_\infty$ that 
\begin{equation}\label{eq5.21}
\int_{B_M(0)} Q_{\varepsilon_n}(x + x_{i,n} - y_j) |\tilde{u}_{j,n}^\sigma|^{2p} \, dx \to 0.
\end{equation}
On the other hand, by the standard Agmon-type estimates (see \cite{Agmon1982}), there exist $C,\alpha>0$ independent of $n$, such that
\[
|\tilde{u}_{j,n}(x)|\leq C e^{-\alpha|x|} \quad \text{for all } x\in\mathbb{R}^N.
\]
Consequently,
\[
\int_{|x|>M} Q_{\epsilon_n}(x+x_{i,n}-y_j)|\tilde{u}_{j,n}^\sigma|^{2p}dx 
\leq \|Q\|_\infty \int_{|x|>M} |\tilde{u}_{j,n}^\sigma|^{2p}dx 
\leq \|Q\|_\infty\, C^{2p} \int_{|x|>M} e^{-2p\alpha|x|}dx.
\]
For any $\eta>0$, choose $M=M(\eta)>0$ such that
\[
\int_{|x|>M} e^{-2p\alpha|x|}dx < \eta,
\]
jointly with \eqref{eq5.20} and \eqref{eq5.21}, implies that $\|\tilde{u}_{j,n}^\sigma\|_{H^1}\to0$. This contradicts \eqref{eq5.19}.
 Therefore, 
$U_j^\sigma \neq 0$ for every $j$ and $\sigma$, i.e., $\mathbf{U} \in \mathcal{A}$.
\par
We claim that the sequence $\{\varepsilon_n x_{i,n}\}$ is bounded.
	Assume by contradiction that, up to a subsequence, $|\varepsilon_n x_{i,n}|\to\infty$.
	Fix $\varphi\in C_c^\infty(\R^N)$, and set $K:=\operatorname{supp}\varphi$.
	Since $\mathbf u_{\varepsilon_n}$ is a weak solution of the system, changing variables gives
	\begin{align*}
		\int_{\R^N}\bigl(\nabla \tilde u_{\ell,n}\cdot\nabla\varphi+\tilde u_{\ell,n}\varphi\bigr)\,dx
		&=
		\mu_\ell\int_{\R^N} Q\bigl(\varepsilon_n(x+x_{i,n})-y_\ell\bigr)\,
		|\tilde u_{\ell,n}|^{2p-2}\tilde u_{\ell,n}\,\varphi\,dx \\
		&~
		+\sum_{j\neq \ell}\lambda_{\ell j}\int_{\R^N}
		|\tilde u_{j,n}|^{p}|\tilde u_{\ell,n}|^{p-2}\tilde u_{\ell,n}\,\varphi\,dx .
	\end{align*}
	By $(A_3)$, $\operatorname{supp}(Q)\subset B_{R_Q}(0)$ for some $R_Q>0$.
	Since $K$ is bounded and $|\varepsilon_n x_{i,n}|\to\infty$, for all large $n$ we have
	\[
	\bigl|\varepsilon_n(x+x_{i,n})-y_\ell\bigr|
	\ge |\varepsilon_n x_{i,n}|-|y_\ell|-\varepsilon_n|x|>R_Q
	~\text{for all }x\in K,
	\]
	hence $Q(\varepsilon_n(x+x_{i,n})-y_\ell)\equiv0$ on $K$ for all large $n$.
	Letting $n\to\infty$ and using $\tilde{\mathbf u}_n\to\mathbf U$ in $L^{2p}_{\rm loc}(\R^N)^m$, we obtain
	\[
	\int_{\R^N}\bigl(\nabla U_\ell\cdot\nabla\varphi+U_\ell\varphi\bigr)\,dx
	=
	\sum_{j\neq \ell}\lambda_{\ell j}\int_{\R^N}|U_j|^{p}|U_\ell|^{p-2}U_\ell\,\varphi\,dx,
	~\forall\,\varphi\in C_c^\infty(\R^N).
	\]
Choose $\chi_R\in C_c^\infty(\R^N)$ such that $0\le \chi_R\le 1$, $\chi_R\equiv 1$ on $B_R(0)$,
$\operatorname{supp}(\chi_R)\subset B_{2R}(0)$ and $|\nabla\chi_R|\le C/R$.
Testing the above identity with $\varphi=\chi_R^2U_\ell$ and using $\lambda_{\ell j}<0$, we obtain
\[
\int_{\R^N}\Bigl(\nabla U_\ell\cdot\nabla(\chi_R^2U_\ell)+\chi_R^2|U_\ell|^2\Bigr)\,dx
=\sum_{j\neq \ell}\lambda_{\ell j}\int_{\R^N}\chi_R^2|U_j|^p|U_\ell|^p\,dx\le 0.
\]
Moreover, by the product rule,
\[
\int_{\R^N}\nabla U_\ell\cdot\nabla(\chi_R^2U_\ell)\,dx
=\int_{\R^N}\Bigl(|\nabla(\chi_RU_\ell)|^2-|U_\ell|^2|\nabla\chi_R|^2\Bigr)\,dx.
\]
Consequently,
\[
\int_{\R^N}\Bigl(|\nabla(\chi_RU_\ell)|^2+\chi_R^2|U_\ell|^2\Bigr)\,dx
\le \int_{\R^N}|U_\ell|^2|\nabla\chi_R|^2\,dx.
\]
Since $|\nabla\chi_R|\le C/R$ and $\operatorname{supp}(\nabla\chi_R)\subset B_{2R}(0)\setminus B_R(0)$, we have
\[
\int_{\R^N}|U_\ell|^2|\nabla\chi_R|^2\,dx
\le \frac{C}{R^2}\int_{B_{2R}(0)\setminus B_R(0)}|U_\ell|^2\,dx\to 0
\]
as $R\to\infty$, which implies
\[
\int_{\R^N}\bigl(|\nabla U_\ell|^2+|U_\ell|^2\bigr)\,dx=0,
\]
i.e., $U_\ell= 0$, which contradicts $U_\ell\neq 0$. Consequently, passing to a  subsequence, there exists $\xi_i\in\R^N$ such that
	\begin{equation}\label{eq5.22}
		\varepsilon_n x_{i,n}\to \xi_i~\text{as}~n\to\infty.
	\end{equation}
\par	
	Next, we show that $\xi_i = 0$ by contradiction. Suppose that $\xi_i \neq 0$. 
\par
We first claim that
\begin{equation}\label{eq5.23}
  Q(\xi_i - y_\ell)<Q(-y_\ell)~\text{for some}~\ell\in\{1,\cdots,m\}.
\end{equation}  
Suppose to the contrary that $Q(\xi_i-y_\ell)=Q(-y_\ell)=\|Q\|_\infty$ for all $\ell$. Then, by $(A_3)$, for each $\ell$, there exists$j_\ell\in{1,\dots,m}$ such that $\xi_i-y_\ell=-y_{j_\ell}$. If $j_\ell=\ell$ for some $\ell$, then $\xi_i=0$, contradicting $\xi_i\neq0$. Hence $j_\ell\neq\ell$ for all $\ell$. Define a map $\sigma:\{1,\dots,m\}\to\{1,\dots,m\}$ by $\sigma(\ell)=j_\ell$. Since the points $y_1,\dots,y_m$ are pairwise distinct, $\sigma$ is a permutation of $\{1,\dots,m\}$ without fixed points. Moreover, we have
\begin{equation*}
y_{\sigma(\ell)}=y_\ell-\xi_i,~\text{for each}~\ell.
\end{equation*}
Iterating this relation gives $y_{\sigma^k(\ell)}=y_\ell-k\xi_i$ for any positive integer $k$. Because the set $\{y_1,\dots,y_m\}$ is finite, there exist $k_1\neq k_2$ such that $y_{\sigma^{k_1}(\ell)}=y_{\sigma^{k_2}(\ell)}$, which implies $(k_1-k_2)\xi_i=0$ and hence $\xi_i=0$, a contradiction. Therefore, \eqref{eq5.23} must hold.
\par
Observe that $\mathbf{U}$ is a weak solution of
\begin{equation}\label{eq5.24}
-\Delta U_\ell + U_\ell = \mu_\ell Q(\xi_i - y_\ell) |U_\ell|^{2p-2}U_\ell + \sum_{j\neq\ell}\lambda_{\ell j}|U_j|^p |U_\ell|^{p-2}U_\ell, ~ \ell=1,\dots,m .
\end{equation}
Denote by $J_\xi$ the energy functional associated with \eqref{eq5.24} and recall  $J_0$ be the original limiting functional with coefficients $Q(-y_\ell)$
\begin{equation*}
-\Delta U_\ell + U_\ell = \mu_\ell Q(- y_\ell) |U_\ell|^{2p-2}U_\ell + \sum_{j\neq\ell}\lambda_{\ell j}|U_j|^p |U_\ell|^{p-2}U_\ell, ~ \ell=1,\dots,m .
\end{equation*}
\par
From Lemma \ref{Lem5.3} we have $c_{\varepsilon_n,k} \to c_{0,k}$. The concentration-compactness principle together with the Br\'ezis--Lieb lemma yield
\begin{equation}\label{eq5.25}
c_{0,k} = \lim_{n\to\infty} J_{\varepsilon_n}(\mathbf{u}_{\varepsilon_n}) \ge J_\xi(\mathbf{U}).
\end{equation}
\par
By \eqref{eq5.23} and  $U_\ell \neq 0$, we obtain
\begin{equation}\label{eq5.26}
\begin{aligned}
J_\xi(\mathbf{U}) - J_0(\mathbf{U}) &= \frac{1}{2p}\sum_{\ell=1}^m \mu_\ell\bigl[Q(-y_\ell) - Q(\xi_i - y_\ell)\bigr] \int_{\mathbb{R}^N} |U_\ell|^{2p}\\
&\ge \frac{1}{2p}\mu_\ell\bigl[Q(-y_\ell) - Q(\xi_i-y_\ell)\bigr] \int |U_\ell|^{2p} > 0.
\end{aligned}
\end{equation}

Let $t_0>0$ be the unique number such that $t_0\mathbf{U} \in \mathcal{N}_0^{\mathrm{sc}}$. Then
\[
J_0(t_0\mathbf{U}) = \max_{t>0} J_0(t\mathbf{U}) < J_\xi(\mathbf{U}).
\]

For $k=1$, the critical value $c_{0,1}$ coincides with the minimum of $J_0$ on $\mathcal{N}_0^{\mathrm{sc}}$. Indeed, by Theorem \ref{thm1.1}, $c_{0,1}$ is the first critical value of $\Psi_0$, and since $\Psi_0(\mathbf{u}) = J_0(m_0(\mathbf{u}))$ and $m_0$ is a bijection between $\mathcal{A}$ and $\mathcal{N}_0^{\mathrm{sc}}$, we have $c_{0,1} = \inf_{\mathcal{N}_0^{\mathrm{sc}}} J_0$. Consequently,
\[
c_{0,1} \le J_0(t_0\mathbf{U}).
\]
Combining this with \eqref{eq5.25} and \eqref{eq5.26} we obtain
\[
c_{0,1} \le J_0(t_0\mathbf{U}) < J_\xi(\mathbf{U}) \le c_{0,1},
\]
a contradiction. Hence $\xi_i = 0$ for $k=1$.

Assume now that $k\ge2$ and still $\xi_i\neq0$. We claim that for every $\delta>0$
there exists a symmetric compact set $A_k\subset\mathcal A$ with $\gamma(A_k)\ge k$
such that
\[
\sup_{\mathbf w\in A_k}\Psi_0(\mathbf w)\le J_0(t_0\mathbf U)+\delta.
\]
Once this is proved, the definition of $c_{0,k}$ yields
$c_{0,k}\le \sup_{A_k}\Psi_0\le J_0(t_0\mathbf U)+\delta$, and letting $\delta\to0$
we get $c_{0,k}\le J_0(t_0\mathbf U)$, which contradicts
$J_0(t_0\mathbf U)<J_\xi(\mathbf U)\le c_{0,k}$.

We begin by constructing a compactly supported, fully sign-changing test function
whose reduced energy is arbitrarily close to $J_0(t_0\mathbf U)$.
Fix a cut-off function $\chi\in C_c^\infty(\R^N)$ such that $0\le\chi\le1$,
$\chi\equiv1$ on $B_1(0)$ and $\chi\equiv0$ on $\R^N\setminus B_2(0)$, and set
$\chi_R(x):=\chi(x/R)$ for $R>0$.
Since $t_0\mathbf U\in\mathcal N^{\mathrm{sc}}_0\subset\mathcal A$ is fully sign-changing,
for each $i$ one has $U_i^\pm\not\equiv0$, hence there exists $R_0>0$ such that
\[
\|U_i^\pm\|_{H^1(B_{R_0}(0))}>0~\text{for all }i=1,\dots,m.
\]
Choosing $R\ge R_0$ and defining
\[
\widetilde{\boldsymbol\psi}_R:=\chi_R\, t_0\mathbf U\in H,
\]
we have $(\widetilde\psi_{R,i})^\pm\not\equiv0$ for all $i$ (because $\chi_R\equiv1$
on $B_{R_0}(0)$), hence $\widetilde{\boldsymbol\psi}_R\in\mathcal A$, and moreover
$\widetilde{\boldsymbol\psi}_R\to t_0\mathbf U$ in $H$ as $R\to\infty$.
By Lemma~\ref{lem2.5}(a) and Lemma~\ref{lem2.4}, $\Psi_0=J_0\circ m_0$ is continuous on $\mathcal A$,
so
\[
\Psi_0(\widetilde{\boldsymbol\psi}_R)\longrightarrow \Psi_0(t_0\mathbf U)=J_0(t_0\mathbf U)
~\text{as }R\to\infty.
\]
Therefore, for the given $\delta>0$ we can fix $R\ge R_0$ such that
\begin{equation}\label{eq5.27}
	\Psi_0(\widetilde{\boldsymbol\psi}_R)\le J_0(t_0\mathbf U)+\frac{\delta}{4}.
\end{equation}
In particular, $\widetilde{\boldsymbol\psi}_R$ has compact support contained in $B_{2R}(0)$.

Next, we place $k-1$ far-away translates of $\widetilde{\boldsymbol\psi}_R$ so that all
supports are mutually disjoint and lie outside a large ball on which $\mathbf U$ has small tail.
Fix $\eta>0$ (to be chosen depending on $\delta$). Since $\mathbf U\in H$ and
$\mathbf U\in L^{2p}(\R^N)^m$, there exists $R_1>2R$ such that
\begin{equation}\label{eq5.28}
	\sum_{\ell=1}^m\int_{\R^N\setminus B_{R_1}(0)}
	\Bigl(|\nabla U_\ell|^2+|U_\ell|^2+|U_\ell|^{2p}\Bigr)\,dx<\eta.
\end{equation}
Choose points $z_1,\dots,z_{k-1}\in\R^N$ satisfying
\[
|z_j|\ge R_1+2R,~ |z_j-z_{j'}|>4R~ (j\neq j').
\]
Define
\[
\boldsymbol\psi_j(x):=\widetilde{\boldsymbol\psi}_R(x-z_j),~ j=1,\dots,k-1.
\]
Then $\operatorname{supp}(\boldsymbol\psi_j)\subset \R^N\setminus B_{R_1}(0)$ for each $j$, and the supports
$\operatorname{supp}(\boldsymbol\psi_1),\dots,\operatorname{supp}(\boldsymbol\psi_{k-1})$ are pairwise disjoint; moreover,
each $\boldsymbol\psi_j\in\mathcal A$ and $\Psi_0(\boldsymbol\psi_j)=\Psi_0(\widetilde{\boldsymbol\psi}_R)$
by translation invariance of $J_0$ (hence of $\Psi_0$).

We now set
\[
E_k:=\mathrm{span}\bigl\{t_0\mathbf U,\boldsymbol\psi_1,\dots,\boldsymbol\psi_{k-1}\bigr\},
~
A_k:=\{\mathbf w\in E_k:\ \|\mathbf w\|=1\}.
\]
Then $A_k$ is compact and symmetric, and since $E_k$ is $k$-dimensional we have
$\gamma(A_k)=k$. We also have $A_k\subset\mathcal A$: indeed, if
$\mathbf w=a\,t_0\mathbf U+\sum_{j=1}^{k-1}b_j\boldsymbol\psi_j\in A_k$, then on $B_{R_0}(0)$
all $\boldsymbol\psi_j$ vanish (because $|z_j|\ge R_1+2R\ge R_0+2R$), hence
$\mathbf w=a\,t_0\mathbf U$ on $B_{R_0}(0)$. If $a\neq0$ then $(w_i)^\pm\not\equiv0$ for each $i$
since $U_i^\pm\not\equiv0$ on $B_{R_0}(0)$; if $a=0$, then $\mathbf w=\sum b_j\boldsymbol\psi_j\neq0$,
and because the supports of the $\boldsymbol\psi_j$ are disjoint and each $\boldsymbol\psi_j$
is fully sign-changing, it follows that $(w_i)^\pm\not\equiv0$ for every $i$ as well.
Thus $\mathbf w\in\mathcal A$ for all $\mathbf w\in A_k$.

It remains to estimate $\Psi_0$ on $A_k$.
Write $\mathbf w=a\,t_0\mathbf U+\sum_{j=1}^{k-1}b_j\boldsymbol\psi_j\in A_k$.
Because the supports of $\boldsymbol\psi_j$ are pairwise disjoint and contained in
$\R^N\setminus B_{R_1}(0)$, all interaction integrals among different $\boldsymbol\psi_j$ vanish.
Moreover, by \eqref{eq5.28} and $\operatorname{supp}(\boldsymbol\psi_j)\subset\R^N\setminus B_{R_1}(0)$,
all mixed integrals involving $\mathbf U$ and $\boldsymbol\psi_j$ (quadratic terms and
$L^{2p}$-terms) can be made arbitrarily small by choosing $\eta$ sufficiently small.
Since $A_k$ is compact in the finite-dimensional space $E_k$, the coefficients
$(a,b_1,\dots,b_{k-1})$ stay in a bounded set, hence the above smallness is uniform on $A_k$.
Using the continuity of $m_0$ on $\mathcal A$ (Lemma~\ref{lem2.4}) and the continuity of $J_0$ on $H$,
we can thus fix $\eta=\eta(\delta)>0$ and then choose $R_1$ and the points $z_j$ as above so that
\[
\Psi_0(\mathbf w)=J_0\bigl(m_0(\mathbf w)\bigr)
\le \max\Bigl\{J_0(t_0\mathbf U),\,\Psi_0(\widetilde{\boldsymbol\psi}_R)\Bigr\}+\frac{\delta}{4}
~\text{for all }\mathbf w\in A_k.
\]
Combining this with \eqref{eq5.27} yields
\[
\sup_{\mathbf w\in A_k}\Psi_0(\mathbf w)\le J_0(t_0\mathbf U)+\delta,
\]
which proves the claim and hence gives the desired contradiction.

Therefore $\xi_i=0$ also for $k\ge2$. In all cases we conclude that $\xi_i=0$, i.e.
$\varepsilon_n x_{i,n}\to0$ as $n\to\infty$.

\end{proof}

\par
{\bf Proof of Theorem~\ref{thm1.2}.}
	Fix $k\in\mathbb{N}^*$ and take a sequence $\{\varepsilon_n\}\subset(0,+\infty)$ with
	$\varepsilon_n\to 0$.
	Let
	\[
	\mathbf{u}_n:=\mathbf{u}_{\varepsilon_n}^{(k)}=(u_{1,n},\dots,u_{m,n})\in H
	\]
	be the $k$-th sign-changing solution given by Theorem~\ref{thm1.1}, so that
	\[
	J_{\varepsilon_n}(\mathbf{u}_n)=c_{\varepsilon_n,k}.
	\]
	For simplicity, we omit the superscript $(k)$ in Steps~1--2 and keep the notation
	$u_{i,n}:=u_{i,\varepsilon_n}$ for the components of $\mathbf{u}_n$.
	
\par
\textit{Proof of \emph{(i)}.}
Since $\mathbf{u}_n$ is a critical point of $J_{\varepsilon_n}$ at level $c_{\varepsilon_n,k}$ and
$\mathbf{u}_n\in\mathcal{N}^{\mathrm{sc}}_{\varepsilon_n}$, the Nehari identities on
$\mathcal{N}^{\mathrm{sc}}_{\varepsilon_n}$ and the Sobolev inequality imply that $\{\mathbf{u}_n\}$ is bounded
in $H$.

We now fix a concentration sequence.
By Lemma~\ref{Lem5.3} (applied, for instance, to the first component), there exist $R>0$, $\eta>0$
and a sequence $\{x_n\}\subset\R^N$ such that
\[
\int_{B_R(x_n)} |u_{1,n}|^{2p}\,dx \ge \eta ~ \text{for all }n .
\]
Moreover, by Lemma~\ref{Lem5.4}, we have
\begin{equation}\label{eq5.29}
	\varepsilon_n x_n \to 0 ~\text{as }n\to\infty .
\end{equation}
To match the notation in Theorem~\ref{thm1.2}, we set
\[
x^{(k)}_{\varepsilon_n,1}=\cdots=x^{(k)}_{\varepsilon_n,m}:=x_n ~ \text{for all }n.
\]
Then \emph{(i)} follows from \eqref{eq5.29}.
\par
\textit{Proof of \emph{(ii)}.}
Define, for $i=1,\dots,m$,
\[
\tilde u_{i,n}(x):=u_{i,n}(x+x_n),~ \tilde{\mathbf u}_n:=(\tilde u_{1,n},\dots,\tilde u_{m,n}).
\]
Since translations preserve the $H$--norm, $\{\tilde{\mathbf u}_n\}$ is bounded in $H$.
Hence, up to a subsequence, there exists $\mathbf U=(U_1,\dots,U_m)\in H$ such that
\[
\tilde{\mathbf u}_n \rightharpoonup \mathbf U ~\text{in }H,
~
\tilde{\mathbf u}_n \to \mathbf U ~\text{in }L^{2p}_{\mathrm{loc}}(\R^N)^m.
\]
In particular, by the choice of $\{x_n\}$ we have $\int_{B_R(0)}|\tilde u_{1,n}|^{2p}\,dx\ge\eta$ for all $n$,
so $U_1\not\equiv 0$ and $\mathbf U\not\equiv \mathbf0$.

\medskip
We now identify the equation solved by $\mathbf{U}$.
Fix $\ell\in\{1,\dots,m\}$ and $\varphi\in C_c^\infty(\mathbb{R}^N)$.
Testing the $\ell$-th Euler--Lagrange equation for $\mathbf{u}_n$ with $\varphi(\cdot-x_n)$ and changing
variables, we obtain
\begin{align*}
	\int_{\mathbb{R}^N}\bigl(\nabla \tilde u_{\ell,n}\cdot\nabla \varphi+\tilde u_{\ell,n}\varphi\bigr)\,dx
	&= \mu_\ell\int_{\mathbb{R}^N} Q\bigl(\varepsilon_n(x+x_n)-y_\ell\bigr)
	|\tilde u_{\ell,n}|^{2p-2}\tilde u_{\ell,n}\varphi\,dx \\
	&~ + \sum_{j\neq \ell}\lambda_{\ell j}\int_{\mathbb{R}^N}
	|\tilde u_{j,n}|^p\,|\tilde u_{\ell,n}|^{p-2}\tilde u_{\ell,n}\varphi\,dx .
\end{align*}
Choose $R_\varphi>0$ such that $\operatorname{supp}(\varphi)\subset B_{R_\varphi}(0)$.
Since $|\varepsilon_n x|\le \varepsilon_n R_\varphi\to0$ for all $x\in B_{R_\varphi}(0)$ and
$\varepsilon_n x_n\to0$ by \eqref{eq5.29}, we have
\[
\varepsilon_n(x+x_n)-y_\ell \to -y_\ell ~\text{uniformly for }x\in B_{R_\varphi}(0),
\]
and hence, by continuity of $Q$,
\[
Q\bigl(\varepsilon_n(x+x_n)-y_\ell\bigr)\to Q(-y_\ell)~\text{uniformly on }B_{R_\varphi}(0).
\]
Together with $\tilde u_{i,n}\to U_i$ in $L^{2p}(B_{R_\varphi}(0))$ for all $i$, we may pass to the limit and conclude
that $\mathbf{U}$ is a weak solution of the limiting system \eqref{eq1.2}.
In particular, $\mathbf U$ is a critical point of $J_0$.

\medskip
We next show that the convergence is in fact strong in $H^1(\R^N)$, namely
\begin{equation}\label{eq5.30}
	u_{i,n}(\cdot+x_n)\to U_i
	~\text{in}~H^1(\mathbb{R}^N)\ \text{as }n\to\infty,~ i=1,\dots,m.
\end{equation}
Indeed, by \eqref{eq5.11} we have $c_{\varepsilon_n,k}\to c_{0,k}$ as $n\to\infty$.
Moreover, applying the concentration--compactness principle to the bounded sequence $\{\tilde{\mathbf u}_n\}$
and using the Br\'ezis--Lieb lemma for the quadratic term, the $2p$--powers and the coupling $p$--powers,
we obtain the energy decomposition along the extracted profile:
\[
\lim_{n\to\infty}J_{\varepsilon_n}(\mathbf u_n)
=\lim_{n\to\infty}J_{0}(\tilde{\mathbf u}_n)
=J_0(\mathbf U)+\lim_{n\to\infty}J_0(\tilde{\mathbf u}_n-\mathbf U),
\]
and the corresponding splitting for the Nehari-type identities.
Since $\mathbf U$ is a critical point of $J_0$, the above identities force
$\lim_{n\to\infty}J_0(\tilde{\mathbf u}_n-\mathbf U)=0$, and hence
$\tilde{\mathbf u}_n\to \mathbf U$ in $H$.
This yields \eqref{eq5.30}.

As a consequence, the truncation maps $u\mapsto u^\pm$ being continuous on $H^1(\R^N)$ give
$u_{i,n}^\pm(\cdot+x_n)\to U_i^\pm$ in $H^1(\R^N)$ for every $i$.
By Lemma~\ref{lem2.3}, we have $u_{i,n}^\pm\not\equiv0$ for all $i$ and all $n$, hence
$U_i^\pm\not\equiv0$ for all $i$. Therefore, $\mathbf{U}$ is fully sign-changing, and \emph{(ii)} follows.

Finally, combining $c_{\varepsilon_n,k}\to c_{0,k}$ with the above strong convergence and the Br\'ezis--Lieb lemma,
we obtain
\begin{equation*}
	c_{0,k}=\lim_{n\to\infty}J_{\varepsilon_n}(\mathbf{u}_n)=J_0(\mathbf{U}).
\end{equation*}
\par	
\textit{Proof of \emph{(iii)}.}
	Fix $i\in\{1,\dots,m\}$ and keep $x_n$ as in Step~1.
	By \eqref{eq5.29} and \eqref{eq5.30}, we have
	\begin{equation*}
		\varepsilon_n x_n\to 0
		~\text{and}~
		u_{i,n}(\cdot+x_n)\to U_i~\text{in}~H^1(\mathbb{R}^N).
	\end{equation*}
	Define
	\[
	v_n(x):=u_{i,n}(x+x_n).
	\]
	Then $v_n\to U_i$ in $H^1(\mathbb{R}^N)$ in particular,
	\begin{equation}\label{eq5.31}
		v_n\to U_i ~\text{strongly in }L^2(\mathbb{R}^N)\ \text{and in }L^{2p}(\mathbb{R}^N).
	\end{equation}
	
	Fix $q\in[1,\infty)$ and $R>0$, and set
	\[
	\Omega_{n,R}:=\mathbb{R}^N\setminus B_{R/\varepsilon_n}(0),
	~
	d_n:=|x_n|.
	\]
	Since $\varepsilon_n d_n=|\varepsilon_n x_n|\to0$, we have $d_n=o(1/\varepsilon_n)$.
	For any $x\in\Omega_{n,R}$ one has $|x|\ge R/\varepsilon_n$, hence
	\[
	|x-x_n|\ge |x|-|x_n|\ge \frac{R}{\varepsilon_n}-d_n,
	\]
	and therefore
	\[
	\Omega_{n,R}-x_n \subset \mathbb{R}^N\setminus B_{R/\varepsilon_n-d_n}(0).
	\]
	Since $R/\varepsilon_n-d_n\to+\infty$, for every $R_0>0$ there exists $n_0$ such that
	$R/\varepsilon_n-d_n\ge R_0$ for all $n\ge n_0$, and consequently
	\begin{equation}\label{eq5.32}
		\Omega_{n,R}-x_n\subset \mathbb{R}^N\setminus B_{R_0}(0)~ \text{for all }n\ge n_0.
	\end{equation}
	
	We claim that $\int_{\Omega_{n,R}}|u_{i,n}|^q\,dx\to0$ for every fixed $R>0$.
	
If $q\in[2,2p]$,
	choose $\theta\in[0,1]$ such that $\frac1q=\frac{\theta}{2}+\frac{1-\theta}{2p}$.
	Then, by interpolation and \eqref{eq5.32}, for all $n\ge n_0$,
	\[
	\|u_{i,n}\|_{L^q(\Omega_{n,R})}
	=\|v_n\|_{L^q(\Omega_{n,R}-x_n)}
	\le \|v_n\|_{L^2(\mathbb{R}^N\setminus B_{R_0})}^{\theta}
	\|v_n\|_{L^{2p}(\mathbb{R}^N\setminus B_{R_0})}^{1-\theta}.
	\]
	Taking $\limsup_{n\to\infty}$ and using \eqref{eq5.31}, we obtain
	\[
	\limsup_{n\to\infty}\|u_{i,n}\|_{L^q(\Omega_{n,R})}
	\le \|U_i\|_{L^2(\mathbb{R}^N\setminus B_{R_0})}^{\theta}
	\|U_i\|_{L^{2p}(\mathbb{R}^N\setminus B_{R_0})}^{1-\theta}.
	\]
	Letting $R_0\to\infty$ yields
	\[
	\limsup_{n\to\infty}\int_{\Omega_{n,R}}|u_{i,n}(x)|^q\,dx=0
	~\text{for every fixed }R>0\text{ and every }q\in[2,2p].
	\]

If $q>2p$, since $\{\mathbf{u}_n\}$ is bounded in $H$ and the system is subcritical, standard elliptic $L^\infty$ estimates yield that there exists $C>0$ independent of $n$ such that
	$\|u_{i,n}\|_{L^\infty(\mathbb{R}^N)}\le C$ for all $n$.
	Hence,
	\[
	\int_{\Omega_{n,R}}|u_{i,n}|^q\,dx
	\le \|u_{i,n}\|_{L^\infty(\mathbb{R}^N)}^{\,q-2p}\int_{\Omega_{n,R}}|u_{i,n}|^{2p}\,dx
	\le C^{q-2p}\int_{\Omega_{n,R}}|u_{i,n}|^{2p}\,dx\to0.
	\]
If $q\in[1,2]$, fix $\sigma\in(0,1)$ and set $v_n(x):=u_{i,n}(x+x_n)$.
Since $\{u_n\}$ is bounded in $H$ and the right-hand side has subcritical growth,
standard Agmon-type weighted estimates for $-\Delta w+w=g$ applied componentwise yield
a constant $C_\sigma>0$, independent of $n$, such that
\begin{equation}\label{eq5.33}
	\int_{\R^N}e^{2\sigma|x|}\bigl(|\nabla v_n|^2+|v_n|^2\bigr)\,dx\le C_\sigma .
\end{equation}
Let $R_0>0$ and $E_{R_0}:=\R^N\setminus B_{R_0}(0)$. For $q\in[1,2)$, writing
$|v_n|^q=(e^{\sigma|x|}|v_n|)^q e^{-\sigma q|x|}$ and applying H\"older's inequality, we obtain
\[
\int_{E_{R_0}}|v_n|^q\,dx
\le
\Bigl(\int_{\R^N}e^{2\sigma|x|}|v_n|^2\,dx\Bigr)^{q/2}
\Bigl(\int_{E_{R_0}}e^{-\frac{2\sigma q}{2-q}|x|}\,dx\Bigr)^{(2-q)/2}.
\]
By \eqref{eq5.33} the first factor is uniformly bounded in $n$, while the second factor
tends to $0$ as $R_0\to\infty$ since the exponential weight is integrable on $\R^N$. Hence,
\[
\lim_{R_0\to\infty}\ \limsup_{n\to\infty}\int_{\R^N\setminus B_{R_0}(0)}|v_n(x)|^q\,dx=0,
\qquad q\in[1,2).
\]
	For $q=2$, we simply write
	\[
	\int_{\mathbb{R}^N\setminus B_{R_0}(0)}|v_n(x)|^2\,dx
	\le e^{-2\sigma R_0}\int_{\mathbb{R}^N}e^{2\sigma|x|}|v_n(x)|^2\,dx
	\le e^{-2\sigma R_0} C_\sigma,
	\]
	and hence the same conclusion holds for $q=2$.
	Consequently,
	\[
	\lim_{R_0\to+\infty}\ \limsup_{n\to\infty}\int_{\mathbb{R}^N\setminus B_{R_0}(0)}|v_n(x)|^q\,dx=0
	~\text{for every }q\in[1,2].
	\]
	
	Finally, by \eqref{eq5.32}, $\Omega_{n,R}-x_n\subset\mathbb{R}^N\setminus B_{R/\varepsilon_n-d_n}(0)$
	with $R/\varepsilon_n-d_n\to+\infty$, we deduce
	\[
	\limsup_{n\to\infty}\int_{\Omega_{n,R}}|u_{i,n}(x)|^q\,dx
	=
	\limsup_{n\to\infty}\int_{\Omega_{n,R}-x_n}|v_n(x)|^q\,dx
	\le
	\limsup_{n\to\infty}\int_{\mathbb{R}^N\setminus B_{R/\varepsilon_n-d_n}(0)}|v_n(x)|^q\,dx\to0,
	\]
	for every fixed $R>0$ and every $q\in[1,2]$.  Together with the already proved cases
	$q\in[2,2p]$ and $q>2p$, this yields (iii) for all $q\in[1,\infty)$.
	
	Altogether, for every $q\in[1,\infty)$ and every fixed $R>0$,
	\[
	\limsup_{n\to\infty}\int_{\mathbb{R}^N\setminus B_{R/\varepsilon_n}(0)}|u_{i,n}(x)|^q\,dx=0,
	\]
	and therefore
	\[
	\lim_{R\to\infty}\ \limsup_{n\to\infty}
	\int_{\mathbb{R}^N\setminus B_{R/\varepsilon_n}(0)}|u_{i,n}(x)|^q\,dx=0,
	~\forall\,q\in[1,\infty).
	\]
	
	It remains to prove the case $q=\infty$.
	Assume by contradiction that there exist $\delta_0>0$, $R_n\to\infty$ and $z_n\in\mathbb{R}^N\setminus B_{R_n/\varepsilon_n}(0)$ such that
	\[
	|u_{i,n}(z_n)|\ge \delta_0 .
	\]
	Define
	\[
	\hat{\mathbf{u}}_n(x):=\mathbf{u}_n(x+z_n)=(\hat u_{1,n}(x),\dots,\hat u_{m,n}(x)).
	\]
	Then $\{\hat{\mathbf{u}}_n\}$ is bounded in $H$ and $|\hat u_{i,n}(0)|\ge\delta_0$ for all $n$.

	Fix $R>0$ and consider the system satisfied by $\hat{\mathbf u}_n$ on $B_R(0)$
	\[
	-\Delta \hat u_{\ell,n}+\hat u_{\ell,n}
	=\mu_\ell\,Q(\varepsilon_n(x+z_n)-y_\ell)\,|\hat u_{\ell,n}|^{2p-2}\hat u_{\ell,n}
	+\sum_{j\neq \ell}\lambda_{\ell j}|\hat u_{j,n}|^{p}|\hat u_{\ell,n}|^{p-2}\hat u_{\ell,n},~\ell=1,\dots,m.
	\]
	Since $Q\in L^\infty(\R^N)$ and $\{\hat{\mathbf u}_n\}$ is bounded in $H$, we have
	$\{\hat u_{\ell,n}\}$ bounded in $L^{2^*}(B_R)$ for every $\ell$.
	Hence the right-hand side is bounded in $L^{t}(B_R)$ for some $t>1$ independent of $n$, and therefore,
	by standard local elliptic $W^{2,t}$-estimates (\cite{GilbargTrudinger1983}) we obtain that
	$\{\hat u_{\ell,n}\}$ is bounded in $W^{2,t}(B_{R/2})$ for each $\ell$.
	Choosing $t>N$, the Sobolev--Morrey embedding (\cite{GilbargTrudinger1983}) yields that, up to a subsequence,
	\[
	\hat u_{\ell,n}\to \hat W_\ell ~\text{in }C^{1,\alpha}(B_{R/2})\ \text{for some }\alpha\in(0,1),
	~ \ell=1,\dots,m,
	\]
	and in particular
	\[
	\hat u_{i,n}(0)\to \hat W_i(0)~\text{as }n\to\infty.
	\]
	Since $|\hat u_{i,n}(0)|\ge\delta_0$ for all $n$, it follows that $\hat W_i(0)\neq0$.
	
	Moreover, by boundedness in $H$, up to a subsequence, we may assume that
	\[
	\hat{\mathbf{u}}_n\rightharpoonup \hat{\mathbf{W}}\ \text{in }H,
	~
	\hat{\mathbf{u}}_n\to \hat{\mathbf{W}}\ \text{in }L^r_{\mathrm{loc}}(\mathbb{R}^N)^m
	\ \text{for every }r\in[2,2^*),
	\]
	where $\hat{\mathbf W}=(\hat W_1,\dots,\hat W_m)$.
	
	\medskip
	Since $|z_n|\ge R_n/\varepsilon_n$, we have $|\varepsilon_n z_n|\ge R_n\to\infty$.
	Let $K\subset\mathbb{R}^N$ be bounded. Then $\varepsilon_n x\to0$ uniformly on $K$, and thus for each $\ell$,
	\[
	|\varepsilon_n(x+z_n)-y_\ell|
	\ge |\varepsilon_n z_n|-|y_\ell|-\varepsilon_n|x|
	\to\infty ~\text{uniformly for }x\in K.
	\]
	By $(A_3)$, there exists $R_Q>0$ such that $\operatorname{supp}(Q)\subset B_{R_Q}(0)$.
	Hence, for all large $n$ and all $x\in K$,
	$|\varepsilon_n(x+z_n)-y_\ell|>R_Q$, which implies
	\[
	Q(\varepsilon_n(x+z_n)-y_\ell)=0~\text{on }K \ \text{for all large }n \text{ and all }\ell.
	\]
	Passing to the limit in the weak formulation, using the local strong convergence above, we infer that
	$\hat{\mathbf{W}}$ solves
	\[
	-\Delta \hat W_\ell+\hat W_\ell=\sum_{j\neq \ell}\lambda_{\ell j}|\hat W_j|^p|\hat W_\ell|^{p-2}\hat W_\ell,
	~ \ell=1,\dots,m.
	\]
	
	Testing each equation with $\hat W_\ell$ and summing over $\ell$ yields
	\[
	\sum_{\ell=1}^m\int_{\mathbb{R}^N}\bigl(|\nabla \hat W_\ell|^2+|\hat W_\ell|^2\bigr)\,dx
	=\sum_{\ell=1}^m\sum_{j\neq\ell}\lambda_{\ell j}\int_{\mathbb{R}^N}|\hat W_\ell|^p|\hat W_j|^p\,dx
	\le 0,
	\]
	because $\lambda_{\ell j}<0$ by $(A_2)$. Hence $\hat{\mathbf{W}}=\mathbf{0}$, which contradicts
	$\hat W_i(0)\neq0$. Therefore,
	\[
	\lim_{R\to\infty}\ \limsup_{n\to\infty}
	\|u_{i,n}\|_{L^\infty(\mathbb{R}^N\setminus B_{R/\varepsilon_n}(0))}=0,
	\]
which implies \emph{(iii)} holds.
	
	\par
	\textit{Proof of \emph{(iv)}.}
	In Steps~1--2 we fixed $k$ and omitted the superscript $(k)$.
	In this step $k$ varies, so we restore the superscript to avoid ambiguity.
	Since $\{c_{0,k}\}_{k\ge1}$ is nondecreasing and unbounded, we can choose a strictly increasing subsequence
	$\{c_{0,k_j}\}_{j\ge1}$ such that
	\[
	0<c_{0,k_1}<c_{0,k_2}<\cdots<c_{0,k_j}<\cdots\to+\infty.
	\]
	For simplicity of notation, we relabel this subsequence and still denote it by $\{c_{0,k}\}_{k\ge1}$.
	
	Fix $k\ge1$. Choose an arbitrary sequence $\{\varepsilon_n^{(k)}\}\subset(0,+\infty)$ with
	$\varepsilon_n^{(k)}\downarrow0$, and let $\mathbf{u}_{\varepsilon_n^{(k)}}^{(k)}$ be the $k$-th sign-changing
	solution given by Theorem~\ref{thm1.1}, namely
	\[
	J_{\varepsilon_n^{(k)}}\bigl(\mathbf{u}_{\varepsilon_n^{(k)}}^{(k)}\bigr)=c_{\varepsilon_n^{(k)},k}.
	\]
	Applying Steps~1--2 to the sequence $\{\mathbf{u}_{\varepsilon_n^{(k)}}^{(k)}\}_n$, we obtain a limiting profile
	$\mathbf{U}^{(k)}\in H$ such that $\mathbf{U}^{(k)}$ is a fully sign-changing weak solution of
	\eqref{eq1.2} and
	\[
	J_0(\mathbf{U}^{(k)})
	=\lim_{n\to\infty}J_{\varepsilon_n^{(k)}}\bigl(\mathbf{u}_{\varepsilon_n^{(k)}}^{(k)}\bigr)
	=\lim_{n\to\infty}c_{\varepsilon_n^{(k)},k}
	=c_{0,k},
	\]
	where the last equality follows from \eqref{eq5.11}.
	In particular, if $k\neq \ell$, then
	\[
	J_0(\mathbf{U}^{(k)})=c_{0,k}\neq c_{0,\ell}=J_0(\mathbf{U}^{(\ell)}),
	\]
	and hence $\mathbf{U}^{(k)}\neq \mathbf{U}^{(\ell)}$.
	Consequently, the family $\{\mathbf{U}^{(k)}\}_{k\ge1}$ is pairwise distinct and satisfies
	\[
	J_0(\mathbf{U}^{(1)})<J_0(\mathbf{U}^{(2)})<\cdots\to+\infty.
	\]
	This proves \emph{(iv)}.

\par
{\bf Conflict Of Interest Statement.} The authors declare that there are no conflict of interests, we do not have any possible conflicts of interest.
\par
{\bf Data Availability Statement.} Our manuscript has non associated data.

\end{document}